\DeclareSymbolFontAlphabet{\mathbb}{AMSb} 
\DeclareSymbolFontAlphabet{\mathbbl}{bbold}
\newcommand{\Prism}{{\mathlarger{\mathbbl{\Delta}}}} 
\newtheorem{theorem}{Theorem}[section]
\newtheorem{lemma}[theorem]{Lemma}
\newtheorem{prop}[theorem]{Proposition}
\newtheorem{conjecture}[theorem]{Conjecture}
\newtheorem{corollary}[theorem]{Corollary}
\theoremstyle{definition}
\newtheorem{definition}[theorem]{Definition}
\newtheorem{example}[theorem]{Example}
\newtheorem{convention}[theorem]{Convention}
\newtheorem{assumption}[theorem]{Assumption}
\theoremstyle{remark}
\newtheorem{remark}[theorem]{Remark}
\numberwithin{equation}{section}
\def\A              {\mathbb{A}}
\def\cA             {\mathcal{A}}
\def\fA             {\mathfrak{A}}
\def\cB             {\mathcal{B}}
\def\C              {\mathbb{C}}
\def\D              {\mathcal{D}}
\def\ccE            {\mathscr{E}}
\def\F              {\mathbb{F}}
\def\cF             {\mathcal{F}}
\def\G              {\mathbb{G}}
\def\cG             {\mathcal{G}}
\def\cH             {\mathcal{H}}
\def\cI             {\mathcal{I}}
\def\Q              {\mathbb{Q}}
\def\N              {\mathbb{N}}
\def\P              {\mathfrak{P}}
\def\R              {\mathbb{R}}
\def\S              {\mathbb{S}}
\def\cS             {\mathcal{S}}
\def\Z              {\mathbb{Z}}
\def\O              {\mathcal{O}}
\def\M              {\mathcal{M}}
\def\Y              {\mathcal{Y}}
\def\X              {\mathfrak{X}}
\def\cX             {\mathcal{X}}
\def\iso            {\cong}
\def\g              {\mathfrak{g}}
\def\Aut            {\operatorname{Aut}}
\def\alg            {\operatorname{alg}}
\def\Hom            {\operatorname{Hom}}
\def\ad             {\operatorname{ad}}
\def\Bun            {\operatorname{Bun}}
\def\coeq           {\operatorname{coeq}}
\def\cris           {\operatorname{cris}}
\def\crys           {\operatorname{crys}}
\def\cycl           {\operatorname{cycl}}
\def\der            {\operatorname{der}}
\def\det            {\operatorname{det}}
\def\dR             {\operatorname{dR}}
\def\End            {\operatorname{End}}
\def\Ext            {\operatorname{Ext}}
\def\Fl             {\mathcal{F}l}
\def\Frac           {\operatorname{Frac}}
\def\Gal            {\operatorname{Gal}}
\def\GL             {\operatorname{GL}}
\def\Gr             {\operatorname{Gr}}
\def\Grpd           {\operatorname{Grpd}}
\def\GSp            {\operatorname{GSp}}
\def\id             {\operatorname{id}}
\def\Igs            {\operatorname{Igs}}
\def\Isom           {\operatorname{Isom}}
\def\ker            {\operatorname{ker}}
\def\Res            {\operatorname{Res}}
\def\Lie            {\operatorname{Lie}}
\def\Nilp           {\operatorname{Nilp}}
\def\Nrd            {\operatorname{Nrd}}
\def\opp            {\operatorname{op}}
\def\Perf           {\operatorname{Perf}}
\def\Perfd          {\operatorname{Perfd}}
\def\pre            {\operatorname{pre}}
\def\Proj           {\operatorname{Proj}}
\def\red            {\operatorname{red}}
\def\Rep            {\operatorname{Rep}}
\def\Sh             {\operatorname{Sh}}
\def\Spa            {\operatorname{Spa}}
\def\Spd            {\operatorname{Spd}}
\def\Spf            {\operatorname{Spf}}
\def\Spec           {\operatorname{Spec}}
\def\Tr             {\operatorname{Tr}}
\def\invlim         {\varprojlim}
\def\dirlim         {\varinjlim}
\begin{document}

\title{Igusa stacks for certain abelian-type Shimura varieties}
\author{Fabian Schnelle}
\maketitle

\begin{abstract}
    We construct Igusa stacks for the good reduction locus of a class of abelian-type Shimura varieties that can be defined in terms of a PEL datum, under the assumption that it is of type (A even) or (C) and unramified at a prime $p$.
\end{abstract}

\tableofcontents
\section{Introduction}
In this paper, we are concerned with Scholze's fiber product conjecture for Shimura varieties with infinite level at $p$, as it was first formulated in Mingjia Zhang's PhD thesis \cite{Zha23}. In rough terms, it is a conjecture about how the $p$-adic geometry of Shimura varieties with infinite level at $p$ can be separated into a $p$-part, which is to be modelled by the Schubert cell in the mixed-characteristic affine Grassmannian labelled by the inverse of the Hodge cocharacter of the Shimura datum, and a prime-to-$p$ part, which is to be modelled by a v-stack which is supposed to interpolate between Igusa varieties. For this reason, Zhang called these stacks ``Igusa stacks".

\subsection{Igusa stacks}
To state the conjecture, consider the following setup. Let $(G,X)$ be a Shimura datum with associated $G(\C)$-conjugacy class of minuscule cocharacters $[\mu^{-1}]$ and field of definition $E_0$. We fix a prime $p$ and an isomorphism $\C \iso \overline{\Q}_p$, and complete $E_0$ with respect to the prime above $p$ that is induced by the map $E_0 \to \C \iso \overline{\Q}_p$. We will denote the resulting local field by $E$, and its residue field by $\F_q$. Consider a compact open subgroup $K = K_p K^p \subseteq G(\A_f)$ and the category $\Perf$ of perfectoid spaces in characteristic $p$, which we endow with the v-topology. By $\cS_K$ we will denote the diamond over $\Spd E$ attached to the adic analytification of the Shimura variety at level $K$, and by $\cS_{K^p} = \invlim_{K^p} \cS_{K_p K^p}$ the Shimura variety with infinite level at $p$. We also let $\cS$ be the Shimura variety at infinite level everywhere, again considered as a diamond. Furthermore, let $\Gr_G$ be the $B_{\dR}^+$-affine Grassmannian over $\Spd E$ attached to $G_{\Q_p}$. We will also fix a maximal torus inside a Borel subgroup of $G_{\overline{\Q}_p}$, and a dominant cocharacter $\mu \in X_*^+(T)$ representing the $G(\overline{\Q}_p)$-conjugacy class of $[\mu]$. Denote by $\Gr_{G,\mu}$ the Schubert cell labelled by $\mu$. Furthermore let $\Bun_G$ be the small v-stack on $\Perf$ that parametrizes $G_{\Q_p}$-bundles on the Fargues-Fontaine curve. Recall the Beauville-Laszlo uniformization map $BL: \Gr_G \to \Bun_G$ and the Hodge-Tate period map $\pi_{HT}: \cS_{K^p} \to \Gr_G$, whose image lies in $\Gr_{G,\mu}$. 

With this setup we can now state the (rational part of the) fiber product conjecture:
\begin{conjecture}[Scholze, cf. \cite{Zha23}]\label{conjecture}
    There exists a construction of a system of small v-stacks $\{\Igs_{K^p}\}_{K^p}$ on $\Perf$, together with maps $\red: \cS_{K^p}/\Spd E \to \Igs_{K^p}$ and $\overline{\pi}_{HT}: \Igs_{K^p} \to \Bun_G$, such that
    \begin{enumerate}[label=(\roman*)]
        \item For each $K^p$, the following diagram is Cartesian:
\[\begin{tikzcd}
	{\cS_{K^p}} & {\Gr_{G,\mu}} \\
	{\Igs_{K^p}} & {\Bun_G}
	\arrow["{\pi_{HT}}", from=1-1, to=1-2]
	\arrow["\red"', from=1-1, to=2-1]
	\arrow["BL", from=1-2, to=2-2]
	\arrow["{\overline{\pi}_{HT}}"', from=2-1, to=2-2]
\end{tikzcd}\]
    \item There exists a $G(\A_f)$-action on the system  $\{\Igs_{K^p}\}_{K^p}$ of Igusa stacks (with $G(\Q_p)$ acting trivially) that descends the Hecke action on $\{\cS_{K^p}\}_{K^p}$. 
    \end{enumerate}
\end{conjecture}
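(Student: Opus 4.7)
The plan is to construct the Igusa stack by leveraging the moduli-theoretic description afforded by the PEL datum, first in a Hodge-type case and then descending to the full abelian-type datum $(G,X)$. Let $(G_1, X_1)$ denote a PEL Shimura datum whose adjoint agrees with that of $(G, X)$. In types (A even) and (C) this $(G_1, X_1)$ is of Hodge type, and the Shimura variety $\cS_{K_1}(G_1, X_1)$ parametrizes abelian varieties equipped with a prime-to-$p$ polarization, an action of an order in a semisimple $\Q$-algebra, and a level structure. The unramified hypothesis at $p$ provides smooth integral models and a well-behaved theory of the associated $p$-divisible groups with PEL structure.

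Concretely, I would first construct an Igusa stack $\Igs^{G_1}_{K_1^p}$ for $(G_1, X_1)$. For a perfectoid space $S$ over $\F_q$, its $S$-points should be triples $(\underline{H}, \iota, \eta^p)$, where $\underline{H}/S$ is a $p$-divisible group, $\iota$ is the PEL structure of the required type, and $\eta^p$ is a $K_1^p$-level structure on the prime-to-$p$ adelic Tate module on the generic fiber. After v-sheafifying this moduli problem, I would equip it with a map $\red: \cS_{K_1^p}(G_1, X_1) \to \Igs^{G_1}_{K_1^p}$ extracting the $p$-divisible group with PEL from the universal abelian variety, and a map $\overline{\pi}^{G_1}_{HT}: \Igs^{G_1}_{K_1^p} \to \Bun_{G_1}$ sending $(\underline{H}, \iota, \eta^p)$ to the $G_1$-bundle on the Fargues-Fontaine curve of $S$ produced by $\underline{H}$.

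For the Cartesian property of the Hodge-type diagram, the key input is the Scholze-Weinstein classification identifying $p$-divisible groups on a perfectoid $S$ with pairs consisting of a $G_1$-bundle on the Fargues-Fontaine curve of $S$ together with a Hodge-Tate filtration on its fiber at infinity. An $S$-point of $\Igs^{G_1}_{K_1^p} \times_{\Bun_{G_1}} \Gr_{G_1, \mu}$ then consists of a PEL $p$-divisible group with level structure, together with a lift of its $G_1$-bundle to the Schubert cell $\Gr_{G_1, \mu}$; the latter datum uniquely promotes $\underline{H}$ to a $p$-divisible group in characteristic zero whose Hodge cocharacter is $\mu$, which is precisely the data of a point of $\cS_{K_1^p}(G_1, X_1)$. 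This verification should parallel the proof in \cite{Zha23} for the Siegel case, but using the PEL moduli rather than the Siegel one.

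Finally, I would pass from $(G_1, X_1)$ to $(G, X)$ via Deligne's recipe, which presents $\cS_{K^p}(G, X)$ as a quotient of connected components of $\cS_{K_1^p}(G_1, X_1)$ by an action of an explicit group built from $Z(G)(\Q)$. Performing the same quotient on $\Igs^{G_1}_{K_1^p}$ yields the desired $\Igs_{K^p}$, and the Cartesian diagram together with the $G(\A_f)$-Hecke action can then be transported across this quotient. The main obstacle I anticipate lies precisely in this descent step: one must ensure the quotient behaves well in the v-topology, and that the natural maps $\Gr_{G_1} \to \Gr_G$ and $\Bun_{G_1} \to \Bun_G$ induced by the (almost) central isogeny $G_1 \to G$ interact correctly with $BL$ and $\overline{\pi}_{HT}$ so that Cartesianness is preserved at the $(G, X)$ level. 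The unramifiedness at $p$ and the restriction to types (A even) and (C) should be exactly what keeps the relevant integral and $p$-adic structures tractable enough for this to go through.
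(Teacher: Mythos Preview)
Your high-level strategy—build the Igusa stack for an auxiliary Hodge/PEL datum and then descend—matches the paper's, but two concrete points separate your outline from a working proof.

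First, your definition of $\Igs^{G_1}_{K_1^p}$ is not well-posed: you ask for a $p$-divisible group $\underline{H}/S$ together with a $K_1^p$-level structure on ``the prime-to-$p$ adelic Tate module on the generic fiber,'' but a $p$-divisible group carries no prime-to-$p$ Tate module. The paper (following \cite{DHKZ24}) instead takes objects to be pairs $(S^\sharp, x)$ with $S^\sharp$ an untilt and $x:\Spf R^{\sharp+}\to \X$ a map to the \emph{formal integral model}, i.e.\ a formal abelian scheme with $G$-structure and full away-from-$p$ level; morphisms are quasi-isogenies of the mod-$\varpi$ reductions. The abelian scheme (not just its $p$-divisible group) is what supports the prime-to-$p$ level, and Serre--Tate theory is then needed in the Cartesianness argument to pass between the abelian scheme and the $p$-divisible data.

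Second, and more seriously, the descent you sketch via ``Deligne's recipe'' does not go through directly, and the paper's workaround is the genuine new idea. The relevant action relating $G^*$ (the PEL group) to $G$ is the polarization action $\lambda\mapsto\epsilon\lambda$ for $\epsilon\in\O_{F,+}^\times$, but this action is \emph{not defined on the usual PEL moduli problem}: scaling $\lambda$ by $\epsilon$ rescales the Weil pairing and pushes the similitude factor of the level structure out of $\widehat{\Z}^{(p),\times}$ into $\widehat{\O}_F^{(p),\times}$. The paper's remedy is to enlarge the moduli problem to \emph{hybrid spaces} $\M_{\alpha,K}$, indexed by $\alpha\in\A_{F,f}^{p,\times}$, whose level structures are $\widehat{\O}_F^{(p)}$-symplectic similitudes rather than $\widehat{\Z}^{(p)}$-ones; on these the polarization action makes sense. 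Even then the quotient is delicate: at principal level $K(N)$ the action of $\Delta(K)$ is essentially never free, so the paper uses Chevalley's lemma to produce a cofinal system of ``uniformly good'' level subgroups for which it is free, passes to infinite level everywhere, and only then takes the stacky quotient $\Igs_G=[\Igs/\underline{\Delta}]$. The Cartesian diagram for $(G,X)$ then follows formally from the hybrid one because both vertical maps become pro-\'etale $\underline{\Delta}$-torsors. Your outline anticipates that the descent is the obstacle, but it does not contain the mechanism (hybrid level structures plus the good-level analysis) that actually resolves it.
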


On a historical note, Zhang proved the conjecture for PEL-type Shimura data of type (A) and (C) in the classification of Kottwitz in her PhD thesis \cite{Zha23}, under the assumptions that the reductive group $G$ is unramified at $p$, and also with the imperfection that the Igusa stacks she defined were defined over $\Spd\F_q$ instead of $\Spd\F_p$. Later Patrick Daniels, Pol van Hoften, Dongryul Kim and Mingjia Zhang generalized the result to arbitrary Shimura data of Hodge type and also managed to lower the base to $\Spd\F_p$, see \cite{DHKZ24}. In both cases they proved the result only on the good reduction locus of the Shimura varieties in question (which agrees with the whole Shimura variety if the latter is compact).

It is usually more convenient to state Conjecture \ref{conjecture} after passing to infinite level away from $p$ as well. This can be done because the Hodge-Tate period map $\pi_{HT}$ (and also $\overline{\pi}_{HT}$, once constructed) is invariant under changing the away-from-$p$ level. At least in the Hodge-type setup, one of the benefits of working with the Igusa stack at infinite level is that it turns out to be a v-sheaf (see \cite[5.2.6]{DHKZ24}).

In the recent preprint \cite{Kim25}, Dongryul Kim axiomatized the notion of Igusa stacks as follows.
\begin{definition}[\cite{Kim25}, 5.5]\label{igs_axioms}
    An \textbf{Igusa stack} is a v-stack $\Igs(G,X)$ equipped with a $\underline{G(\A_f^p)}$-action and a Cartesian diagram
\[\begin{tikzcd}
	\cS & {\Gr_{G,\mu}} \\
	\Igs & {\Bun_{G,\mu}}
	\arrow["{\pi_{HT}}", from=1-1, to=1-2]
	\arrow[from=1-1, to=2-1]
	\arrow["BL", from=1-2, to=2-2]
	\arrow["{\overline{\pi}_{HT}}", from=2-1, to=2-2]
\end{tikzcd}\]
satisfying the following two axioms
\begin{itemize}
    \item The $G(\A_f^p) \times G(\Q_p)$ action on $\Igs \times_{\Bun_{G,\mu}} \Gr_{G,\mu}$ recovers the Hecke action of $G(\A_f)$ on $\cS$.
    \item For $\phi$ the absolute Frobenius, $\phi \times \id$ acts trivially on $\Igs \times_{\Bun_{G,\mu}} \Gr_{G,\mu}.$
\end{itemize}
\end{definition}
With this axiomatization, Kim showed that the construction of Igusa stacks (either on the full Shimura variety or its good reduction locus), if they exist, is automatically functorial in Shimura data (see \cite[Theorem B]{Kim25}, and furthermore that Igusa stacks are unique up to unique isomorphism, again if they exist (see \cite[Theorem A]{Kim25}). Both uniqueness and functoriality can be proven from the first axiom alone, see \cite[Remark 5.7]{Kim25} - the second axiom only comes into play once we actually want to compare the cohomology of Igusa stacks to the cohomology of Shimura varieties.

\subsection{Goal of this paper} 
In this paper, we construct Igusa stacks for the good reduction locus of certain Shimura varieties of abelian type which arise naturally from a PEL datum (see the appendix \cite[A3]{Nek18}). Since they are of abelian type, they are not covered by the results of \cite{DHKZ24}. Examples of such Shimura varieties include Hilbert modular varieties or totally indefinite quaternionic Shimura varieties, but in principle any PEL datum of type (A even) or (C) that is unramified at $p$ will yield such abelian-type Shimura varieties.

What makes strictly abelian-type Shimura varieties difficult to handle is that they do not possess a fine moduli description in terms of abelian varieties with additional structures, and also cannot be embedded into Siegel modular varieties. Thus at first glance, all methods from \cite{Zha23} or \cite{DHKZ24} to construct Igusa stacks break down. 

The fact that the abelian-type Shimura varieties that we are considering stem from a PEL datum makes them substantially easier to handle. In fact, they arise in a two-step process from the PEL-type Shimura varieties attached to the same PEL datum.

In the first step, one enlarges the PEL-type Shimura variety by allowing more general level structures in the moduli problem, which we will call \textbf{hybrid level structures}; this again yields fine moduli spaces, which we will call \textbf{hybrid spaces}. These are still very well behaved, and basically consist of multiple copies of the PEL-type Shimura variety (see Definition \ref{hybrid_spaces}). In the second step, one quotients out a certain polarization action on these hybrid spaces, which will then yield the desired abelian-type Shimura varieties. 

Our main result is as follows.
\begin{theorem}[Corollary \ref{cart_abelian}]\label{thm_introduction}
    There is an Igusa stack $\Igs$ for the abelian-type Shimura datum $(G,X)$ described above, satisfying the axioms of Definition \ref{igs_axioms}.
\end{theorem}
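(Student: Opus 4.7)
The plan is to construct $\Igs$ by reducing in two steps to the known PEL case, following the two-step procedure outlined in the introduction. Let $(G',X')$ denote the PEL-type Shimura datum attached to the given PEL datum. Since the datum is of type (A even) or (C) and unramified at $p$, the results of \cite{Zha23} (and more generally \cite{DHKZ24}) yield an Igusa stack $\Igs'$ for the good reduction locus of $(G',X')$, together with its Cartesian diagram, its $\underline{G'(\A_f^p)}$-action, and the Frobenius axiom of Definition \ref{igs_axioms}.

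The first step is to build an Igusa stack $\Igs^{\mathrm{hyb}}$ for the hybrid spaces. Because a hybrid space decomposes, essentially by construction, as a disjoint union of copies of the PEL Shimura variety indexed by polarization data, I would define $\Igs^{\mathrm{hyb}}$ as the corresponding disjoint union of copies of $\Igs'$. The Hodge-Tate period map, the Beauville-Laszlo map, the Cartesian diagram, the prime-to-$p$ Hecke action, and the Frobenius axiom all transfer componentwise. The only non-routine point at this stage is to check that the natural $\underline{G(\A_f^p)}$-action (with $G$ coming from the abelian-type datum, not $G'$) is well-defined on this disjoint union; this reduces to understanding how the central torus by which $G$ differs from $G'$ acts on the set of components of the hybrid space.

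The second step is to descend from $\Igs^{\mathrm{hyb}}$ to $\Igs$ by quotienting out the polarization action. Writing $\Delta$ for the discrete group acting on the hybrid space to produce the abelian-type Shimura variety, I would set $\Igs := [\Igs^{\mathrm{hyb}}/\Delta]$ as a small v-stack. By v-descent, the square of Definition \ref{igs_axioms} remains Cartesian after passing to $\Delta$-quotients provided $\Delta$ acts compatibly on all four corners and trivially on the right-hand column $\Gr_{G,\mu} \to \Bun_{G,\mu}$. The prime-to-$p$ Hecke action descends because it commutes with the polarization action, and the Frobenius axiom is inherited because the polarization action is of prime-to-$p$ nature and hence commutes with the absolute Frobenius $\phi$.

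The main obstacle I expect is making precise the compatibility between the polarization action and the $p$-adic geometry encoded by $\pi_{HT}$ and $BL$. Concretely, one must lift the $\Delta$-action from the hybrid Shimura variety to $\Igs^{\mathrm{hyb}}$ in a way compatible with $\overline{\pi}_{HT}$, and show that this lift acts trivially (up to canonical isomorphism) on $\Gr_{G,\mu}$ via $\pi_{HT}$ and on $\Bun_{G,\mu}$ via $\overline{\pi}_{HT}$. Intuitively this should be clear: rescaling a polarization by a central element is invisible to the $p$-adic Hodge structure of the underlying abelian variety, and hence to the image under $\pi_{HT}$. Turning this intuition into a rigorous v-stack statement, and then applying a Cartesian-square descent argument, is the technical heart of the construction; once this is in place, uniqueness and functoriality of the resulting Igusa stack follow automatically from \cite[Theorem A]{Kim25}.
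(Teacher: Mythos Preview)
Your two-step outline (hybrid space, then polarization quotient) matches the paper's strategy, but each step diverges from what the paper actually does, and Step~2 hides the main technical obstacle.

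\textbf{Step 1.} The paper does \emph{not} build the hybrid Igusa stack as a disjoint union of copies of the PEL Igusa stack for $(G^*,X^*)$. Instead it defines $\Igs_\alpha$ directly by a moduli problem (Definition~\ref{hybrid_igs}): objects are formal abelian schemes with $G$-structure and infinite tame level, and morphisms are quasi-isogenies preserving the polarization up to a scalar in $\underline{\Z_{(p)}^\times \prod_{\P|p} x_\P^{\Z}}$. The map $\overline{\pi}_{HT}$ lands in $\Bun_G$, not $\Bun_{G^*}$, because hybrid level structures are $\O_p$-symplectic similitudes rather than $\Z_p$-ones. Your disjoint-union approach would have to explain how pushing forward along $G^*\hookrightarrow G$ interacts with the Cartesian square; the paper sidesteps this by redoing the Zhang argument in the hybrid moduli language (Lemma~\ref{8.11 adaptation} through Corollary~\ref{cart_hybrid}).

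\textbf{Step 2.} This is where there is a genuine gap. You assert that the square stays Cartesian after taking $\Delta$-quotients ``by v-descent'', but this requires that $\cX^\circ \to \cX_G^\circ$ be a $\underline{\Delta}$-torsor, i.e.\ that the polarization action on the hybrid space at infinite level is free. The paper shows by explicit example (Hilbert modular varieties at principal level $N$) that at finite level the polarization action is typically \emph{not} free, so one cannot simply take stacky quotients levelwise. The key result is Lemma~\ref{magic_lemma}: using Chevalley's lemma on units, one produces a cofinal system of ``uniformly good'' level subgroups $K$ for which $\nu(K)\cap\O_{F,+}^\times=(K\cap\O_F^\times)^2$ (or the norm analogue in type A~even), making the $\Delta(K)$-action free. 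Passing to infinite level along this cofinal system then gives Lemma~\ref{delta_torsor}: $\cX^\circ\to\cX_G^\circ$ is a pro-\'etale $\underline{\Delta}$-torsor for the \emph{profinite} group $\Delta=\invlim_K\Delta(K)$ (not a discrete group as you wrote). Only with this in hand does the two-out-of-three argument of Corollary~\ref{cart_abelian} go through. This freeness analysis is the technical heart of the paper and is absent from your proposal.
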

We refer to Chapter \ref{igs_chapter} for the explicit construction of $\Igs$.

We remark that Patrick Daniels, Pol van Hoften, Dongryul Kim and Mingjia Zhang have already announced results that will construct Igusa stacks for all abelian-type Shimura varieties. Their methods will heavily lean on the methods developed in \cite{Kim25}, and thus can be expected to be less concrete. On the other hand, by restricting our attention to this special class of abelian-type Shimura varieties, we are able to give a rather concrete construction of Igusa stacks, much in the spirit of \cite{Zha23}.

\subsection{Strategy of proof}
We will make heavy use of the fact that our abelian-type Shimura variety arises in the two-step process described above from the PEL-type Shimura variety.

Namely, we first tackle the situation for the hybrid space at infinite level everywhere. In this step, it is rather straightforward to use analogous methods as in \cite{Zha23} and \cite{DHKZ24} to construct a hybrid Igusa stack fitting into a similar Cartesian diagram.

In a second step, we note that the hybrid Igusa stack (which turns out to be a v-sheaf again) comes equipped with a similar polarization action as the hybrid space. A natural idea is to naively take the stacky quotient by this quotient, but this approach immedaitely runs into problems. This is due to the fact that the polarization action on the hybrid spaces at finite levels is almost never free (in particular not for principal level-$N$ structures, even for $N$ sufficiently large). However, by a careful analysis we manage to find a cofinal system of level subgroups so that the polarization action at these levels is indeed free. Thus quotienting out the polarization action at these levels yields finite étale torsors. After passing to infinite level everywhere, this shows that quotienting out the polarization action on the hybrid space at infinite level and on the hybrid Igusa stack at infinite level yields pro-étale torsors with respect to the same profinite group.

Thus after taking the naive stacky quotient of the hybrid Igusa stack at infinite level, the fiber product diagram for the abelian-type Shimura variety is an immediate consequence of the fiber product diagram for the hybrid space. 

\subsection{Acknowledgements}
This work grew out of the author's master thesis at the University of Bonn, in which only the special case of Hilbert modular varieties was considered. First and foremost, we would like to heartily thank Siyan Daniel Li-Huerta for suggesting the problem of constructing Igusa stacks for Hilbert modular varieties, and for his very close and detailed supervision throughout the whole project. Furthermore, we would like to thank Peter Scholze for agreeing to supervise the thesis; in particular, he made the suggestion to go to infinite level everywhere before taking quotients. 

During the process of generalizing the results of the thesis, the author was supported by Germany’s Excellence Strategy EXC 2044–390685587 “Mathematics Münster: Dynamics–Geometry–Structure”, by the CRC 1442 “Geometry: Deformations and Rigidity” of the DFG, and by the Leibniz prize of Eva Viehmann.

\section{Shimura varieties and hybrid spaces}\label{section_sv}
In this section we first recall the definition of Shimura data and Shimura varieties for the reader's convenience. Then, we discuss the PEL datum that gives rise to the abelian-type Shimura varieties which are our key concern in this paper. Along the way, we also discuss the aforementioned hybrid spaces, their moduli descriptions and integral models. 

\subsection{Shimura data and Shimura varieties}
Let $\S := \Res_{\C/\R}\G_m$ be the Deligne torus.

\begin{definition}[\cite{Del79}, 2.1.1]
    A \textbf{Shimura datum} is a pair $(G,X)$ consisting of a reductive group $G$ over $\Q$ and a $G(\R)$-conjugacy class $X$ of homomorphisms $h: \S \to G_\R$ satisfying the following three axioms:
    \begin{itemize}
        \item[(SV1)] For all $h \in X$, only the weights $(1,-1), (0,0)$ and $(-1,1)$ may appear in the adjoint representation of $h_\C$ on the complexified Lie algebra $\g_\C$ of $G$.
        \item[(SV2)] For all $h \in X$, the adjoint action of $h(i)$ on $G_{\R}^{\ad}$ is a Cartan involution.
        \item[(SV3)] $G^{\ad}$ does not admit any factor $H$ over $\Q$ such that the projection of $h$ onto $H$ is trivial.
    \end{itemize}
\end{definition}

As explained in \cite{Del79}, axiom (SV2) ensures that the set $X$ naturally acquires the structure of a complex manifold, with the stabilizer of every $h \in X$ being compact modulo center. Axiom (SV3) guarantees that the simply connected cover of the derived subgroup $G^{\der}$ satisfies strong approximation.

\begin{definition}
    A \textbf{morphism between Shimura data} $(G,X)$ and $(G',X')$ is a homomorphism $G \to G'$ of algebraic groups over $\Q$ that maps $X$ to $X'$.
\end{definition}

Given a Shimura datum $(G,X)$ and a compact open subgroup $K \subset G(\A_f)$, consider the double coset space
\begin{align*}
    X_K := G(\Q)\backslash(X \times G(\A_f))/K,
\end{align*}
where $G(\Q)$ acts diagonally from the left on both factors (using the obvious embeddings $G(\Q) \hookrightarrow G(\R)$ and $G(\Q) \hookrightarrow G(\A_f)$) and $K$ acts on $G(\A_f)$ by multiplication from the right. If $K$ is sufficiently small, $X_K$ is a complex manifold (\cite{Mil17}, 3.1, 5.13). By the theorems of Baily-Borel and Borel, there is a unique quasi-projective variety $\Sh_K(G,X)_\C$ over $\C$ whose complex analytification is $X_K$.

\begin{definition}
    The \textbf{Hecke action} of $G(\A_f)$ on the inverse system $\{\Sh_K(G,X)_\C\}_K$ via isomorphisms is given by
    \begin{align*}
        \gamma_g: \Sh_K(G,X)_\C \to \Sh_{g^{-1}Kg}(G,X)_\C
    \end{align*}
    where $\gamma_g$ maps a double coset $[h,g_1]$ to $[h,g_1g]$.
\end{definition}

To a Shimura datum $(G,X)$ and $h \in X$, we associate the \textbf{Hodge cocharacter} $\nu_h: \G_{m,\C} \to G_\C$, defined as
\begin{align*}
    \nu_h(z) := h_\C(z,1).
\end{align*}
Denote by $[\nu_h]$ its $G(\C)$-conjugacy class, which does not depend on the choice of $h \in X$. In the introduction, this conjugacy class was denoted by $[\mu^{-1}]$. The automorphism group $\Aut(\C/\Q)$ acts on the set of $G(\C)$-conjugacy classes of cocharacters $\G_{m,\C} \to G_\C$. By \cite[12.1]{Mil17}, $\nu_h$ has a conjugate that is already defined over $\overline{\Q}$, thus we may also regard $[\nu_h]$ as a $G(\overline{\Q})$-conjugacy class.

\begin{definition}
    Let $(G,X)$ be a Shimura datum. Its associated \textbf{reflex field} or \textbf{field of definition} $E(G,X)$ is the fixed field of the stabilizer of $[\nu_h]$ in $\Aut(\C/\Q)$. 
\end{definition}

\begin{theorem}[Deligne, Milne, Borovoi]
    Let $(G,X)$ be a Shimura datum. The system $\{\Sh_K(G,X)_\C\}_K$, equipped with Hecke action by $G(\A_f)$, has a unique model $\{\Sh_K(G,X)\}_K$ over $E(G,X)$, subject to the following requirements.
    \begin{itemize}
        \item The Hecke action is defined over $E(G,X)$.
        \item All special points (i.e. points $[h,g] \in X_K$ such that $h$ factors through a $\Q$-subtorus of $G$) are algebraic.
        \item The Galois action on the special points is normalized as in \cite[2.2.9]{Del79}.
    \end{itemize}
\end{theorem}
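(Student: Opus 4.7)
My plan is to prove uniqueness and existence separately. Uniqueness will come from Zariski density of special points, while existence will be reduced in stages: first the Siegel case via moduli of abelian varieties, then the Hodge-type case by embedding into a Siegel datum, then the abelian-type case by passing through the adjoint Shimura datum, and finally the general reductive case by the Borovoi--Milne method of reciprocity at special points.

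For \textbf{uniqueness}, the essential input is that the special points of $\Sh_K(G,X)_{\C}$ are Zariski dense in every geometric connected component, provided $K$ is sufficiently small. This follows from the existence of a $\Q$-rational maximal torus $T \subseteq G$ with $T(\R)$ compact modulo center satisfying the Deligne conditions, combined with strong approximation for the simply connected cover of $G^{\der}$, which is guaranteed by (SV3). Given two models satisfying the Galois-theoretic requirements at special points, the canonical $\overline{\Q}$-isomorphism between their base changes is Galois equivariant on a Zariski dense subset, hence descends to an isomorphism over $E(G,X)$; compatibility with the Hecke action then forces compatibility throughout the tower in $K$.

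For \textbf{existence}, I would proceed by reduction. The Siegel case $(\GSp_{2g},\cH_g^{\pm})$ has reflex field $\Q$, and the canonical model is the fine moduli space of polarized abelian varieties with level-$K$ structure; the reciprocity law at CM points is exactly the main theorem of complex multiplication of Shimura--Taniyama. For $(G,X)$ of Hodge type, one picks a symplectic embedding $(G,X) \hookrightarrow (\GSp,\cH^{\pm})$ and takes the Zariski closure of the image of $\Sh_K(G,X)_{\C}$ inside finitely many copies of the Siegel canonical model, then uses a special-points argument to show this closed subvariety descends to $E(G,X)$. For abelian type, one first treats the adjoint Shimura datum via an auxiliary Hodge-type lift, and then reconstructs $\Sh_K(G,X)$ using Deligne's fibered coproduct recipe which packages the connected components together with the action of the component group $\pi_0$.

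The main obstacle is the \textbf{general reductive case}, where no symplectic embedding need exist. Here one follows the Borovoi--Milne strategy: specifying the reciprocity law at each special point individually provides a candidate Galois descent, and the difficult step is checking that these local specifications glue into a coherent global descent datum. This rests on Langlands' conjugation conjecture, proved by Borovoi and Milne, which asserts that $\sigma \cdot \Sh(G,X)_{\C}$ is isomorphic to the Shimura variety attached to a conjugate datum $({}^{\sigma}\!G, {}^{\sigma}\!X)$ in a manner compatible with Hecke correspondences; once this compatibility is established, descent to $E(G,X)$ follows formally, and the uniqueness argument above guarantees that the resulting model is unambiguous.
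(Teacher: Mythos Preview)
The paper does not prove this theorem at all: it is stated as background, attributed to Deligne, Milne, and Borovoi, and immediately followed by the definition of the canonical model; no argument is given. So there is nothing in the paper to compare your proposal against.

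That said, your outline is a faithful summary of the standard proof architecture in the literature. Uniqueness via Zariski density of special points (using real approximation and strong approximation for the simply connected cover, the latter guaranteed by (SV3)) is exactly Deligne's argument. The staged existence proof --- Siegel via moduli and the main theorem of CM, Hodge type via symplectic embedding and Zariski closure, abelian type via the connected Shimura variety and Deligne's $\pi_0$-bookkeeping, and the general case via the Borovoi--Milne proof of Langlands' conjugation conjecture --- is also the correct skeleton. One small caution: in the Hodge-type step, ``Zariski closure inside the Siegel model'' is not by itself enough to get a model over $E(G,X)$; one needs to know that the image is already defined over a number field and then invoke the reciprocity law at special points to descend from $\overline{\Q}$ to $E(G,X)$, which is more delicate than your sketch suggests. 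But as a high-level map of the argument, your proposal is accurate and matches what the paper is citing.
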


\begin{definition}
    Let $(G,X)$ be a Shimura datum. For a sufficiently small compact open subgroup $K \subset G(\A_f)$, we call $\Sh_K(G,X)$ the \textbf{Shimura variety} at level $K$. The inverse limit $\lim\limits_{\leftarrow K}\Sh_K(G,X)$, which is a scheme over $E(G,X)$, is called the \textbf{(canonical model of the) Shimura variety attached to the Shimura datum} $(G,X)$ and is denoted by $\Sh(G,X)$.
\end{definition}
For compact open subgroups $K' \subset K$, the morphism $\Sh_K(G,X) \to \Sh_{K'}(G,X)$ is finite étale. Hence, the inverse limit 
\[\Sh(G,X) :=\invlim_{K} \Sh_K(G,X)\]
exists as a scheme that is regular and locally (!) of finite type. This scheme is simply called the \textbf{Shimura variety} attached to the Shimura datum $(G,X)$.

\subsection{Small linear algebraic interlude}
The following basic lemma, which is about switching between non-degenerate bilinear pairings along finite étale ring maps, will come in handy throughout.

\begin{lemma}\label{forms_trace}
    Let $R \to A$ be a finite étale map and let $W$ be a finite locally free $A$-module. Then given a non-degenerate $R$-bilinear form $(\cdot,\cdot): W \times W \to R$ that satisfies $(aw,w') = (w,aw')$ for all $w,w' \in W$ and $a \in A$, one can lift this pairing uniquely to a non-degenerate $A$-bilinear pairing $\langle\cdot,\cdot\rangle: W \times W \to A$  that satisfies
    \[\Tr_{A/R} \circ \langle\cdot,\cdot\rangle = (\cdot,\cdot)\]
\end{lemma}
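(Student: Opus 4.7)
The plan is to build $\langle\cdot,\cdot\rangle$ from $(\cdot,\cdot)$ by exploiting non-degeneracy of the trace pairing of a finite étale extension. Recall that since $R \to A$ is finite étale, the trace map $\Tr_{A/R}: A \to R$ induces an isomorphism
\[
A \xrightarrow{\sim} \Hom_R(A,R), \qquad a \mapsto (b \mapsto \Tr_{A/R}(ab)),
\]
of $A$-modules. This is the only piece of structure beyond formal manipulation that enters the argument.

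First I would construct the form. For fixed $w,w' \in W$, consider the $R$-linear map $A \to R$ sending $a \mapsto (w, aw')$; using the hypothesis $(aw,w')=(w,aw')$, this is indeed $R$-linear in $a$. Applying the trace isomorphism above, I define $\langle w,w'\rangle \in A$ to be the unique element satisfying
\[
\Tr_{A/R}(a \cdot \langle w, w'\rangle) = (w, aw') \quad \text{for all } a \in A.
\]
Taking $a=1$ gives $\Tr_{A/R}\circ \langle\cdot,\cdot\rangle = (\cdot,\cdot)$, and uniqueness of $\langle\cdot,\cdot\rangle$ is immediate from non-degeneracy of the trace pairing. To verify $A$-bilinearity, note that for $b \in A$ we have
\[
\Tr_{A/R}(a\cdot \langle w, bw'\rangle) = (w, abw') = \Tr_{A/R}(ab\cdot \langle w, w'\rangle) = \Tr_{A/R}\bigl(a\cdot (b\langle w,w'\rangle)\bigr)
\]
for all $a$, so $\langle w,bw'\rangle = b\langle w,w'\rangle$ by the uniqueness just used; linearity in the first slot then follows from the defining identity $(bw, w')=(w,bw')$ by the same computation.

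Finally I would check non-degeneracy of $\langle\cdot,\cdot\rangle$. The map $w \mapsto (w' \mapsto (w,w'))$ identifies $W \xrightarrow{\sim} \Hom_R(W,R)$ as $R$-modules by assumption, and the computation $(aw,w')=(w,aw')$ shows that this isomorphism is in fact $A$-linear, where $A$ acts on $\Hom_R(W,R)$ by $(a\cdot f)(w') := f(aw')$. Combining with the tensor-hom adjunction and the trace isomorphism,
\[
\Hom_R(W,R) \cong \Hom_A(W, \Hom_R(A,R)) \cong \Hom_A(W, A),
\]
and one checks this composite sends $w$ to $\langle w, \cdot \rangle$, so $W \xrightarrow{\sim} \Hom_A(W,A)$ is an isomorphism. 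No step here is a serious obstacle; the only subtlety is the bookkeeping for $A$-linearity of the identification $W \cong \Hom_R(W,R)$, but this is forced by the compatibility $(aw,w')=(w,aw')$ that is built into the hypothesis.
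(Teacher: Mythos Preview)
Your argument is correct and follows essentially the same approach as the paper: define $\langle w,w'\rangle$ via the perfectness of the trace pairing by the requirement $\Tr_{A/R}(a\langle w,w'\rangle)=(aw,w')$, and deduce uniqueness, $A$-bilinearity, and non-degeneracy from this. The paper merely asserts the last two verifications without detail, whereas you spell them out (including the adjunction $\Hom_R(W,R)\cong\Hom_A(W,A)$ for non-degeneracy), so your write-up is in fact more complete.
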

\begin{proof}
    The trace pairing is perfect since $R \to A$ is finite étale, thus to define $\langle w,w'\rangle \in A$ for any $w,w' \in W$ it suffices to specify that for all $a \in A$
    \[\Tr_{A/R}(a \langle w,w'\rangle) = (aw,w').\]
    One immediately checks that the bilinear form thus defined is indeed non-degenerate and $A$-linear. 

    Assume we have two lifts $\langle\cdot,\cdot\rangle$ and $\langle\cdot,\cdot\rangle'$, then for all $a \in A$
    \[\Tr_{A/R}(a\langle w,w'\rangle) = (aw,w') = \Tr_{A/R}(a\langle w,w'\rangle'),\]
    which by perfectness of the trace pairing implies $\langle\cdot,\cdot\rangle = \langle\cdot,\cdot\rangle'$.
\end{proof}

\begin{convention}
    In the rest of this paper we will stick to the following convention: Whenever we are in the setting of Lemma \ref{forms_trace}, angular brackets $\langle\cdot,\cdot\rangle$ will denote pairings on $A$, and composition with the trace will be denoted by round brackets $(\cdot,\cdot) :=\Tr_{A/R} \circ \langle\cdot,\cdot\rangle$.
\end{convention}

The interplay with symplectic similitudes is as follows. Assume $f: W \to W'$ is an $A$-linear morphism between finite locally free $A$-modules that are equipped with non-degenerate $R$-bilinear pairings $(\cdot,\cdot)_W$ resp. $(\cdot,\cdot)_{W'}$ satisfying the assumption of the above lemma. Then the following are equivalent:
\begin{enumerate}
    \item $f$ is an $A$-symplectic similitude with respect to the lifted pairings, i.e. there is some $a \in A^\times$ such that for all $w,w' \in W$
        \[\langle f(w),f(w')\rangle_{W'} = a \langle w,w'\rangle_W.\]
    \item There exists an $a \in A^\times$ such that for all $w,w' \in W$
        \[(w,w')_W = \Tr_{A/R}(a \langle f(w),f(w')\rangle_{W'}).\]
    \item There exists an $a \in A^\times$ such that 
        \[(f(w),f(w'))_{W'} = (aw,w')_W = (w,aw')_W\]
\end{enumerate}

\subsection{The PEL data}
From now on, we fix the following PEL data as in \cite[A3]{Nek18}. 

\subsubsection{Global side}
We fix a finite-dimensional simple $\Q$-algebra $B$, equipped with a $\Q$-linear positive involution $(-)^*$. In this case $F := Z(B)^{* = \id}$ is a totally real number field of degree $g = [F:\Q]$. We let $V$ be a non-zero finite-type left $B$-module, equipped with a non-degenerate alternating $F$-bilinear form $\langle\cdot,\cdot\rangle_F$ that satisfies $\langle bv,v'\rangle_F = \langle v,b^*v'\rangle_F$ for all $v,v' \in V$ and $b \in B$. Composing it with the trace $\Tr_{F/\Q}$ yields the non-degenerate alternating $\Q$-bilinear pairing
\[(\cdot,\cdot)_F := \Tr_{F/\Q} \circ \langle\cdot,\cdot\rangle_F\]
that satisfies the same Hermitian property as $\langle\cdot,\cdot\rangle_F$.

The center $F_c := Z(B)$ is either equal to the totally real field $F$ or it is a CM field with maximal totally real subfield $F$ and involution $(-)^*$. Then $C = \End_B(V)$ is a simple $\Q$-algebra with center $F_c$ and an $F$-linear involution $(-)^\#$ that is given by the adjoint with respect to $\langle\cdot,\cdot\rangle_F$.

Let $H := \GSp_{B}(V,\langle\cdot,\cdot\rangle)$ be the algebraic group over $F$ whose points in any $F$-algebra $S$ are given by
\[H(S) = \{h \in \GL_B(V \otimes_F S) : \exists\nu(h) \in S^ \times \text{ } \forall v,v' \in V \text{ } \langle hv,hv'\rangle_F = \nu(h)\langle v,v'\rangle_F\}.\]
The scalars $\nu(h) \in S^\times$ assemble to a morphism of algebraic groups $\nu: H \to \G_{m,F}$. Via Weil restriction, we obtain the algebraic group 
\[G := \Res_{F/\Q}(H),\]
together with a morphism $G \to \Res_{F/\Q}(\G_m)$ which we will again denote by $\nu$. Let $G^*$ be the algebraic group over $\Q$ defined by the pullback diagram

\[\begin{tikzcd}
	{G^*} & G \\
	{\G_m} & {\Res_{F/\Q}(\G_m)}
	\arrow[hook, from=1-1, to=1-2]
	\arrow["\nu", from=1-1, to=2-1]
	\arrow["\ulcorner"{anchor=center, pos=0.125}, draw=none, from=1-1, to=2-2]
	\arrow["\nu", from=1-2, to=2-2]
	\arrow[hook, from=2-1, to=2-2]
\end{tikzcd}\]
Alternatively, we can define $H$ as the functor of points
\[H(S) = \{h \in (C \otimes_F S)^\times : hh^\# = \nu(h) \in S^\times\}\]
for any $F$-algebra $S$, in which case we deduce for any $\Q$-algebra $R$ that
\[G^*(R) = \{g \in (C \otimes R)^\times : gg^\# = \nu(g) \in R^\times\}.\]

We quickly recall the classification of the pair $(B,*)$ from \cite[§1]{Kot92}. It is said to be of type (A) if the pair $(B \otimes_F \overline{F}, * \otimes \id)$ is isomorpic to $\End(W) \times \End(W)^{\opp}$ with $(a,b)^* = (b,a)$, and of type (C) if $(B \otimes_F \overline{F}, * \otimes \id)$ is isomorphic to $\End(W)$ with $(-)^*$ being the adjoint map with respect to a symmetric bilinear form on $W$. If $(-)^*$ is the adjoint map with respect to an alternating bilinear form on $W$, we are in type (B) or (D).

If we are in type (C), then 
\[G_\R = \prod_{v|\infty} \GSp(2n)_\R\]
for some $n \geq 1$, and if we are in type (A), then
\[G_\R = \prod_{v |\infty} GU(a_v,b_v)\]
with $a_v + b_v = n$ for some $n \geq 1$. We will say that we are in type (A even) if $n$ is even, and in type (A odd) if $n$ is odd. 

\begin{assumption}
    From now on, we will always assume that the data we fixed is either of type (A even) (which implies $F_c \neq F$) or of type (C) (which implies $F_c = F$.
\end{assumption}

Under this assumption, the algebraic groups $H$, $G$ and $G^*$ are connected reductive groups, and furthermore the derived subgroups of $H$, and thus of $G$, are simply connected (see \cite[8.7]{Mil17}). 

There exists a morphism of $\R$-algebras $h: \C \to C \otimes \R$ such that $h(\overline{z})=h(z)^\#$ and such that the symmetric $\R$-bilinear form $\langle v,h(i)v'\rangle: V_\R \times V_\R \to \R$ is positive definite. Up to conjugation by an element $c \in (C \otimes \R)^\times$ such that $cc^\# = 1$, this morphism is unique (see \cite[4.3]{Kot92} and \cite[8.12]{Mil17}).

Thus $h$ defines a Shimura datum $(G^*,X^*)$ (resp. $(G,X)$), where $X^*$ (resp. $X$) denotes the $G^*(\R)$-conjugacy class (resp. $G(\R)$-conjugacy class) of $h: \S \to G^*(\R) \to G(\R)$. The Shimura datum $(G^*,X^*)$ is of PEL-type, whereas the Shimura datum $(G,X)$ is strictly of abelian type.

The action of $h(i)$ defines a complex structure von $V_\R$ and thus a Hodge decomposition $V_\C = V^{-1,0} \oplus V^{0,-1}$ of weight $-1$, with $h(z)$ acting by $z$ (resp. by $\overline{z}$) on $V^{-1,0}$ (resp. on $V^{0,-1}$). The Hodge cocharacter $\nu_h: \G_{m,\C} \to G_\C$ attached to $h$ acts by $z$ (resp. by the identity) on $V^{-1,0}$ (resp. on $V^{0,-1}$).

Both Shimura data $(G^*,X^*)$ and $(G,X)$ have the common reflex field $E_0/\Q$ that is generated over $\Q$ by the coefficients of the characteristic polynomial
\[\det(X_1\alpha_1 + \cdots + X_t\alpha_t | V^{-1,0})\]
for any $\Q$-basis $\{\alpha_j\}$ of $B$.

\subsubsection{Local side}
Fix a prime number $p$ and an embedding $\overline{\Q} \hookrightarrow \overline{\Q}_p$. 

\begin{assumption}
   We assume that the PEL datum that we have fixed so far is unramified at $p$, i.e. that every term in the decomposition
   \[B_{\Q_p} = \prod_{\P|p} B \otimes_F F_\P\]
   is a matrix algebra over an unramified extension of $\Q_p$ (where $\P$ runs over all prime ideals of $F$ lying above $p$, and we denote by $F_\P$ the completion of $F$ with respect to $\P$). 
\end{assumption}

In particular, this assumption implies that $p$ is unramified in $F_c$, and that the reductive group $G$ is unramified at $p$ (which in turn implies that hyperspecial subgroups exist).

We assume to be given the following local data: 
\begin{itemize}
    \item A $*$-stable $\O_{F,(p)}$-order $\O_B \subset B$ (where $\O_{F,(p)} := \O_F \otimes \Z_{(p)}$) such that $\O_B \otimes \Z_p$ is a maximal order in $B_{\Q_p}$, and
    \item an $\O_p$-lattice $\Lambda \subset V_{\Q_p}$ that is self-dual (up to a scalar in $F_p^\times)$ with respect to $\langle\cdot,\cdot\rangle_F$ (where we used the notation $\O_p := \O_F \otimes_\Z \Z_p$ and $F_p := F \otimes_\Q \Q_p$).
\end{itemize}

\begin{example}[Hilbert modular varieties]
    If $B = F$ for a totally real number field $F$ with trivial involution $* = \id$, and let $V= F^2$ with the standard alternating form $\langle v,w\rangle_F = v_1w_2 - v_2w_1$, then $G = \Res_{F/\Q}(\GL_{2,F})$ and the resulting Shimura varieties are the (abelian-type) Hilbert modular varieties (which are of type (C)). Classically, their $\C$-valued points arise from $g$-many copies of the Siegel upper-lower halfplane by quotienting out the twisted diagonal Möbius transformation action of congruence subgroups along the $g$-many different embeddings $F \hookrightarrow \R$.
\end{example}

\begin{example}[Totally indefinite quaternion algebras, cf. \cite{Nek18}, A6.2]
    Let $F$ be a totally real number field of degree $g$, and let $D/F$ be a totally indefinite quaternion algebra over $F$, i.e. $D \otimes_F \R \iso M_2(\R)^g$. Consider $D$ as a semisimple $\Q$-algebra. We can equip it with a positive involution $*$ by defining $d^* = u\overline{d}u^{-1}$, where $\overline{(-)}$ denotes the main involution on $D$, and $u \in D^\times$ such that $\overline{u}= -u$ and $\Nrd(u) = -u^2 \in F_+^\times$. Let $V= D$, equipped with a left action by $D$ via $d\cdot x := xd^*$. $V$ also comes equipped with the skew-symmetric $F$-bilinear form
    \[\langle x,y\rangle_F := \Nrd(xu\overline{y}).\]

    In the end, we obtain quaternionic Shimura varieties of type (C) which fit into our PEL-framework, with $G = \Res_{F/\Q}(D^\times)$. If the quaternion algebra $D$ is not totally indefinite, then the associated quaternionic Shimura variety does not fit into our PEL framework. However, one can still relate them to an auxiliary PEL datum of type (Aeven), see \cite[A6.5-6.9]{Nek18}.
\end{example}

\subsection{Hybrid space}\label{section_moduli}
We keep the notations and assumptions from before. In this subsection, we will introduce the aforementioned hybrid space. 

We use the $\O_p$-lattice $\Lambda \subset V_{\Q_p}$ to define the  hyperspecial level subgroup at $p$
\[K_{hs} := \{g \in G(\Q_p): g\Lambda = \Lambda\}.\]
Furthermore, let $K^p \subset G(\A_f^p)$ be a tame level subgroup, and let $K= K_{hs}K^p$.

From this, we obtain a family of hybrid spaces depending on a parameter $\alpha \in \A_{F,f}^{p,\times}$ as follows.

\begin{definition}\label{hybrid_spaces}
    Let us consider the following \textbf{hybrid moduli problem}. Fix a parameter $\alpha \in \A_{F,f}^{p,\times}$. Let $\M_{\alpha,K}$ be the presheaf of groupoids on the big étale site of schemes over $\O_{E_0} \otimes \Z_{(p)}$, whose value on a scheme $S$ over $\O_{E_0} \otimes \Z_{(p)}$ is the groupoid of abelian schemes with $G$-structure and hybrid $K^p$-level structure, i.e.
    \begin{enumerate}[label=(\roman*)]
        \item $A$ is an abelian scheme over $S$ of relative dimension $g$,
        \item $\lambda: A \to A^\vee$ is a $\Z_{(p)}$-polarization of degree prime to $p$,
        \item $\iota: \O_B \to \End_S(A) \otimes_\Z \Z_{(p)}$ is a morphism respecting the involution $(-)^*$ on $\O_B$ and the Rosati involution on $\End_S(A) \otimes \Z_{(p)}$ coming from the polarization $\lambda$, and
        \item $\overline{\eta}$ is a hybrid $K^{p}$-level structure, meaning the following. Consider the pro-étale sheaf 
        \[\underline{\Isom}_{G}(\underline{H_1}(A,\A_f^p),\underline{V_{\A_f^p}}),\]
        whose sections are $\underline{B \otimes_{\Q} \A_f^p}$-linear isomorphisms $\eta$ that are $\underline{\widehat{\O}_F^{(p)}}$-symplectic similitudes with respect to the lifted Weil pairing $\langle\cdot,\cdot\rangle_\lambda$ on $\underline{H_1}(A,\A_f^p)$ and the pairing $\alpha\langle\cdot,\cdot\rangle_F$ on $\underline{V_{\A_f^p}}$. This is a pro-étale $\underline{G(\widehat{\Z}^{(p)})}$-torsor. Choosing a trivializing cover $\tilde{S} \to S$, $\overline{\eta}$ is a $\underline{K^p}(\tilde{S})$-orbit of a section $\eta$ over $\tilde{S}$, such that $\overline{\eta}$ is invariant under the covering action of $\tilde{S} \to S$.
    \end{enumerate}

    An isomorphism between objects $(A,\iota,\lambda,\overline{\eta})$ and $(A',\iota',\lambda',\overline{\eta}')$ over $S$ is a prime-to-$p$ quasi-isogeny $f: A \to A'$ that is compatible with the $\O_B$-action, the level structure and such that 
    \[f^\vee \circ \lambda' \circ f = c\lambda\]
    for some $c \in \underline{\Z_{(p)}^\times}(S)$. 
\end{definition}

By Lemma \ref{forms_trace}, the condition that $\eta$ is a symplectic similitude can be equivalently phrased in the following ways:
\begin{enumerate}
    \item There exists $c \in \underline{\widehat{\O}_F^{(p),\times}}(S)$ such that 
    \[(x,y)_\lambda = \Tr(c \alpha \langle\eta(x),\eta(y)\rangle_F).\]
    \item There exists $c \in \underline{\widehat{\O}_F^{(p),\times}}(S)$ such that
    \[\alpha(\eta(x),\eta(y))_F = (cx,y)_\lambda\]
\end{enumerate}

When comparing the above hybrid moduli problem to the usual PEL-type moduli problem (e.g. as in \cite[5.2.2]{Zha23}, one notices that the key change lies in the notion of level structure. Indeed, hybrid level structures allow strictly more general symplectic similitudes.

$\M_{\alpha,K}$ is a Deligne-Mumford stack that, for $K^p$ sufficiently small (which we will assume from now on), is representable by a smooth quasi-projective scheme over $\O_{E_0} \otimes\Z_{(p)}$, which we will also denote by $\M_{\alpha,K}$. Note that for any $c \in \widehat{\O}_F^{(p),\times}$, the stacks $\M_{\alpha,K}$ and $\M_{c\alpha,K}$ are canonically isomorphic. 

At the expense of only being defined over $E_0$, we can add level at $p$ to the hybrid moduli problem by fixing a second parameter $\beta \in F_p^\times$ and considering the sheaf
\[\underline{\Isom}_G(\underline{H_1}(A,\Q_p),\underline{V_{\Q_p}})\]
of $\underline{B_{\Q_p}}$-linear isomorphisms that are $\underline{\O_p}$-symplectic similitudes with respect to $\langle\cdot,\cdot\rangle_\lambda$ and $\beta\langle\cdot,\cdot\rangle_F$. If $\alpha \in \A_{F,f}^{p,\times}$ and $\beta \in F_p^\times$, then we will denote by $\M_{\alpha\beta,K}$ the combined moduli space with level $K = K_pK^p$ away from $p$ and at $p$.

For finite level at $p$, the extra parameter $\beta$ is an important piece of information. As soon as we pass to infinite level at $p$ (which we will do very soon), however, the parameter $\beta$ stops being important. Namely, the hybrid spaces with infinite level at $p$ for different parameters $\beta$ can be identified via Hecke action by $G(\Q_p)$. Thus in the following we will mostly stick to the case where $\beta = 1$.

\begin{definition}
    For any sufficiently small level subgroup $K \subset G(\A_f)$, $\M_{\alpha,K}$ is called the \textbf{hybric space} at parameter $\alpha$ and level $K$.
\end{definition}

We have the following notion of Hecke action on our hybrid spaces $\M_{\alpha\beta,K}$ (for $\alpha \in \A_{F,f}^{p,\times}$ and $\beta \in F_p^\times$).

\begin{definition}
    Let $g \in G(\A_f)$ and let $K, K'$ be any levels such that $g^{-1}K' g \subseteq K$, then Hecke action by $g$ is the finite étale map 
    \[[g]: \M_{\alpha\beta,K'} \to \M_{\det(g)\alpha\beta,K}\]
    that sends $(A,\iota,\lambda,\overline{\eta}) \in \M_{\alpha\beta,K}(S)$ to $(A,\iota,\lambda,\overline{g \circ \eta})$.
\end{definition}
If $g \in G(\A_f^p)$, then $\nu(g)\alpha$ is still coprime to $p$. Also, if $g \in \GL_2(\widehat{\O}_F)$, then $[g]$ canonically maps into $\M_{\alpha\beta,K}$ again, however altering the similitude factor in the definition of level structure by $\nu(g)$. If $g^{-1}K'g = K$, then $[g]$ is clearly an isomorphism.

Since the transition maps in the tower $\{\M_{\alpha,K}\}_{K \subset G(\A_f)}$ are finite étale, we can pass to the inverse limit in the category of schemes and let
\[\M_{\alpha} := \invlim_{K} \M_{\alpha,K},\]
which is a scheme locally of finite type, locally Noetherian and regular. 

\subsection{Abelian-type setting}
Now that we have introduced the hybrid space $\M_{\alpha,K}$ for any sufficiently small level subgroup $K \subset G(\A_f)$, we can make the following definition.
\begin{definition}
    The \textbf{polarization action} of $\underline{\O_{F,+}^\times}$ on $\M_{\alpha,K}$ is defined as
    \[\epsilon \cdot (A,\iota,\lambda,\overline{\eta}) := (A,\iota,\epsilon\lambda,\overline{\eta})\]
    for $\epsilon \in \underline{\O_{F,+}^\times}(S)$ and $(A,\iota,\lambda,\overline{\eta}) \in \M_K(S)$.
\end{definition}

We alert the reader that in order to define the polarization action, we really needed to pass to the hybrid space. Indeed, polarization action by $\epsilon \in \O_{F,+}^\times$ will change the similitude factor in the definition of hybrid level structure by a factor $\epsilon$ (due to the change of Weil pairing). If we were considering the usual PEL-type level structure, then this could change the rational similitude factor into a non-rational one.

If we are in type (C), this polarization action factors through the finite quotient
\[\Delta(K) := \O_{F,+}^\times/(\O_{F}^\times \cap K)^2\]
since for $\epsilon = \epsilon'^2$ with $\epsilon' \in \O_F^\times \cap K$, multiplication by $\iota(\epsilon')$ defines an isomorphism
\[[\iota(\epsilon')]: (A,\iota,\lambda,\overline{\eta}) \overset{\sim}{\to} (A,\iota,\epsilon^{-1}\lambda,\overline{\eta}).\]
If we are in type (A even), similarly the polarization factors through the finite quotient
\[\Delta(K) := \O_{F,+}^\times/N_{F_c/F}(\O_{F_c}^\times \cap K).\]

The action of $\Delta(K)$ permutes the connected components of $\M_{\alpha,K}$ and the stabilizer of each connected component is
\[(\nu(K) \cap \O_{F,+}^\times)/(K \cap \O_F^\times)^2\]
in type (C) resp.
\[(\nu(K) \cap \O_{F,+}^\times)/N_{F_c/F}(K \cap \O_{F_c}^\times)\]
in type (A even) (see \cite{Nek18}).

Thus the polarization action by $\Delta(K)$ is free if and only if the above quotients are trivial. It is a subtle question under which circumstances this happens. To give an answer, we first recall the following lemma which is originally due to Chevalley.

\begin{lemma}[\cite{Tay03}, 2.1]
    Let $L$ be a number field and $S$ a finite set of places of $L$. By $L_S^\times$ we will denote the subgroup of $L^\times$ consisting of elements that are units at all finite places $v \notin S$ and positive at all real places $v \notin S$. Then for any positive integer $n \in \Z_{>0}$, there is an open subgroup $U \subset \prod_{v \notin S \text{ finite}} \O_{L,v}^\times$ such that $L_S^\times \cap U \subset (L_S^\times)^n$.
\end{lemma}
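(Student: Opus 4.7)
The plan is to reduce the statement to separating finitely many cosets by carefully chosen congruence conditions, and then to use a Chebotarev-type argument to produce those conditions. By Dirichlet's $S$-unit theorem, the full group of $S$-units of $L$ is finitely generated; since $L_S^\times$ is cut out of it by positivity conditions at the real places outside $S$, it has finite index in the $S$-unit group and is itself finitely generated. Consequently $L_S^\times / (L_S^\times)^n$ is a finite abelian group. Since a finite intersection of open subgroups of $\prod_{v \notin S \text{ finite}} \O_{L,v}^\times$ is again open, it suffices to find, for each nontrivial coset $\alpha \cdot (L_S^\times)^n$ (with $\alpha \in L_S^\times$ a chosen representative), an open subgroup $U_\alpha$ disjoint from this coset; the desired $U$ is then obtained as $\bigcap_\alpha U_\alpha$.

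To build $U_\alpha$ I would take open subgroups of the form $\prod_{v \in T}(1 + \mathfrak{p}_v^{k_v}) \times \prod_{v \notin T \cup S} \O_{L,v}^\times$, where $T$ is a finite set of finite places outside $S$ and the $k_v$ are positive integers. The disjointness condition $U_\alpha \cap \alpha \cdot (L_S^\times)^n = \emptyset$ translates into the demand that the image of $\alpha$ in the finite abelian group $\prod_{v \in T} (\O_{L,v}/\mathfrak{p}_v^{k_v})^\times$ not lie in the image of $(L_S^\times)^n$. Fixing generators $u_1, \ldots, u_s$ of $L_S^\times$ modulo torsion together with a generator of the torsion subgroup, the natural tool is Chebotarev density applied to the finite Galois extension $L' := L(\mu_n, u_1^{1/n}, \ldots, u_s^{1/n})$: for generic $\alpha$ there is a positive-density set of primes $\mathfrak{p}$ of $L$, unramified in $L'$ and outside $S$, at which the Frobenius acts nontrivially on $\alpha^{1/n}$; at any such $\mathfrak{p}$ the class of $\alpha$ in $\kappa(\mathfrak{p})^\times$ is not an $n$-th power and in particular lies outside the image of $(L_S^\times)^n$, yielding $U_\alpha$ with $k_v = 1$ at a single well-chosen place.

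The main obstacle I anticipate is the Grunwald--Wang phenomenon: when $n$ is divisible by a high power of $2$ and $L$ lacks the corresponding $2$-power roots of unity, there exist exceptional $\alpha \in L_S^\times \setminus (L_S^\times)^n$ that become $n$-th powers in every completion $L_v$ with $v \notin S$, so that the single-place Chebotarev argument cannot produce a distinguishing residue. This is not a genuine obstruction to the lemma, however, because what must be separated is the image of the finitely generated group $(L_S^\times)^n$, not the full group of local $n$-th powers $(L_v^\times)^n$; the former is in general a strictly smaller subgroup of $\O_{L,v}^\times$. To handle the exceptional cases I would allow larger exponents $k_v$ to exploit the pro-$p$ structure of $1 + \mathfrak{p}_v$, and combine residue information at several places simultaneously. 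Since $L_S^\times / (L_S^\times)^n$ is finite, only a bounded amount of such separation is required, so the construction terminates and produces the desired $U$.
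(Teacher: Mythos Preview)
The paper does not prove this lemma; it is quoted as a result of Chevalley via \cite{Tay03}, so there is no in-paper argument to compare against directly.

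Your reduction to finitely many cosets and the plan to separate each $\alpha\notin (L_S^\times)^n$ by congruence conditions is the right framework, and you correctly identify the Grunwald--Wang phenomenon as the obstacle to the naive single-prime Chebotarev argument. However, your final paragraph does not actually overcome that obstacle. Two specific problems: first, at any place $v\nmid n$ the $n$-th power map is a bijection on $1+\mathfrak p_v$, so ``allowing larger exponents $k_v$'' yields no new information about $n$-th powers beyond the residue field; the extra depth is only meaningful at the finitely many $v\mid n$, and you do not explain how to use those. Second, the closing sentence --- that finiteness of $L_S^\times/(L_S^\times)^n$ makes the construction ``terminate'' --- confuses finiteness of the set of cosets with solvability of each separation problem. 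You have asserted, not shown, that every exceptional $\alpha$ can be separated from the image of $(L_S^\times)^n$ by some congruence.

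There is also a case you do not address at all: one can have $\alpha=\beta^n$ with $\beta\in L^\times$ an $S$-unit but $\beta\notin L_S^\times$ because the positivity conditions at real places outside $S$ fail (this already occurs for $n$ even). Then $\alpha$ is an $n$-th power in every completion, so no local $n$-th power test distinguishes it; one must genuinely use that $\beta$ lies outside $L_S^\times$. The classical proofs avoid both difficulties by working globally: one bounds all the Kummer extensions $L(\zeta_n,\alpha^{1/n})$ for $\alpha\in L_S^\times$ inside a single finite abelian extension of $L(\zeta_n)$ unramified outside $n$ and $S$, and then a finite set of Frobenius conditions at primes of $L$ outside $S$ determines the class of $\alpha$ in the Kummer group. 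That finiteness (of the ambient extension, via class field theory) is the missing ingredient in your sketch.
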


\begin{corollary}
    Let $L$ be a number field. Every finite index subgroup of $\O_L^\times$ contains a subgroup of the form $U \cap \O_L^\times$, where $U \subseteq \widehat{\O}_L^\times$ is a compact open subgroup with $U_\P = \O_{\P}^\times$ for all $\P$ lying above $p$.
\end{corollary}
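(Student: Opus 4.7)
The plan is to reduce to the preceding lemma of Chevalley with a carefully chosen auxiliary set $S$. Let $H \subseteq \O_L^\times$ be a finite-index subgroup of index $n$. Since $\O_L^\times$ is abelian, the subgroup of $n$-th powers $(\O_L^\times)^n$ is automatically contained in $H$, so it suffices to exhibit a compact open subgroup $U \subseteq \widehat{\O}_L^\times$, with $U_\P = \O_\P^\times$ for all $\P \mid p$, such that $U \cap \O_L^\times \subseteq (\O_L^\times)^n$.

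I would take $S$ to be the set consisting of all archimedean places together with all primes $\P \mid p$. Then $L_S^\times$ is the group of $p$-$S$-units of $L$, and it naturally contains $\O_L^\times$. Applying the preceding lemma with this $S$ and exponent $n$ produces an open subgroup
\[U' \subseteq \prod_{v \notin S,\, v \text{ finite}} \O_{L,v}^\times \;=\; \prod_{\P \nmid p} \O_\P^\times\]
with $L_S^\times \cap U' \subseteq (L_S^\times)^n$. I then set
\[U := U' \times \prod_{\P \mid p} \O_\P^\times \;\subseteq\; \widehat{\O}_L^\times,\]
which is compact open and has the required shape at the primes above $p$ by construction.

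The content of the argument is then the identity $(L_S^\times)^n \cap \O_L^\times = (\O_L^\times)^n$, which guarantees that enlarging $U'$ at the $p$-adic components has not ruined the containment. Indeed, given $\epsilon \in U \cap \O_L^\times$, its image in $\prod_{\P \nmid p} \O_\P^\times$ lies in $U'$, and $\epsilon \in L_S^\times$, so by the lemma $\epsilon = \eta^n$ for some $\eta \in L_S^\times$. For each $\P \mid p$ we have $n\cdot v_\P(\eta) = v_\P(\epsilon) = 0$, so $v_\P(\eta) = 0$; since $\eta$ was already a unit away from $p$, this forces $\eta \in \O_L^\times$, hence $\epsilon \in (\O_L^\times)^n \subseteq H$.

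The only potential obstacle is the subtlety of arranging $U_\P = \O_\P^\times$ at the primes above $p$ without losing the inclusion into $H$; this is exactly why $S$ must be enlarged to contain those primes before Chevalley's lemma is invoked, and why the final valuation check at each $\P \mid p$ is needed to extract an $n$-th root \emph{in $\O_L^\times$} rather than merely in $L_S^\times$.
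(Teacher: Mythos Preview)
Your proof is correct and follows essentially the same route as the paper: choose $S$ to contain the archimedean places and the primes above $p$, apply Chevalley's lemma to obtain $U'$, enlarge to $U$ by inserting full local units at $\P \mid p$, and then use the chain $\O_L^\times \cap U \subseteq \O_L^\times \cap (L_S^\times)^n \subseteq (\O_L^\times)^n \subseteq H$. The paper states the inclusion $\O_L^\times \cap (L_S^\times)^n \subseteq (\O_L^\times)^n$ without comment; your valuation argument at each $\P \mid p$ is exactly what justifies it.
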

\begin{proof}
    Let $A \subset \O_L^\times$ be a finite index subgroup. Let $S$ be the union of all infinite places and all finite places dividing $p$, and let $n \gg 0$ be such that for every $x \in \O_L^\times$, $x^n \in A$.
    
    Choose $U$ as in Chevalley's lemma and fill it up with $U_\P = \O_{\P}^\times$ for any prime $\P$ above $p$. Then
    \[\O_L^\times \cap U = \O_L^\times \cap L_S^\times \cap U \subset \O_L^\times \cap (L_S^\times)^n \subset (\O_L^\times)^n \subset A.\qedhere\]
\end{proof}

With this corollary at hand, we can give a first negative example which shows that the polarization action will in general not turn out to be free, even if we are willing to go into deep enough level away from $p$.

\begin{example}
    Consider the case of Hilbert modular varieties, i.e. we are in a type (C) situation with $G= \Res_{F/\Q}(\GL_{2,F})$. Assume we are given hyperspecial level at $p$, and the principal level-$N$-subgroup $K(N)$ for some $N$ prime to $p$. Then the polarization action is free if and only if the quotient $(\det K(N) \cap \O_{F,+}^\times)/(K(N) \cap \O_F^\times)^2$ is trivial. Note that $K(N) \cap \O_F^\times = (1+N\O_F)^\times$ and $\det K(N) \cap \O_{F,+}^\times = (1+N\O_F)^\times_+$. So we need to analyze the quotient
    \[(1+N\O_F)_+^\times /(1+N\O_F)^{\times 2}.\]
    We claim that for $N$ sufficiently large, we have $(1+N\O_F)_+^\times = (1+N\O_F)^\times$, which would imply that the quotient is isomorphic to $(\Z/2\Z)^{g-1}$ by Dirichlet's unit theorem. 

    To prove the claim, we apply the above corollary to the finite index subgroup $\O_{F,+}^\times \subset \O_F^\times$. Thus there is a compact open subgroup $U\subseteq \widehat{\O}_F^\times$ with $U_\P = \O_\P^\times$ for all $\P$ lying above $p$, such that $U \cap \O_F^\times \subseteq \O_{F,+}^\times$. In particular, for any $N \gg 0$ that is prime-to-$p$ such that $K(N) \subset U$, we see that
    \[(1+N\O_F)^\times \subset \O_{F,+}^\times,\]
    i.e. $(1+N\O_F)_+^\times = (1+N\O_F)^\times$, proving the claim.
\end{example}

However, not all hope is lost. The same corollary of Chevalley's Lemma can be used to find certain level subgroups for which the polarization action turns out to be free.

\begin{lemma}\label{magic_lemma}
    Let $K= K_pK^p \subset G(\A_f)$ be any compact open subgroup. If we are in type (C), then there exists an open compact normal subgroup $K'^p \subseteq K^p$ of finite index such that for any wild level $K'_p$ containing $K_p$,
    \[\nu(K'_pK'^p) \cap \O_{F,+}^\times = (K'_pK'^p \cap \O_F^\times)^2.\]
    Similarly, if we are in type (A even), the same is true for the formula
    \[\nu(K'_pK'^p) \cap \O_{F,+}^\times = N_{F_c/F}(K'_pK'^p \cap \O_{F_c}^\times).\]
\end{lemma}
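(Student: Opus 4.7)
My plan is to construct $K'^p$ explicitly by applying the corollary of Chevalley's lemma to a carefully chosen finite index subgroup of $\O_F^\times$ (respectively $\O_{F_c}^\times$ in type (A even)). The ``$\supseteq$'' inclusion is immediate: if we denote by $\iota$ the central scalar embedding, then the identity $\nu \circ \iota = (\cdot)^2$ in type (C) (respectively $\nu\circ\iota = N_{F_c/F}$ in type (A even)) shows that $u^2 = \nu(\iota(u))$ is automatically totally positive for any $u \in K'_pK'^p \cap \iota(\O_F^\times)$. The substance of the lemma is the reverse inclusion.

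\textbf{Construction in type (C).} I would first set
\[\Sigma^p := \{c \in \widehat{\O}_F^{(p),\times} : \iota(c) \in K^p\},\]
a compact open subgroup of $\widehat{\O}_F^{(p),\times}$ by openness of $K^p$. Then $A := \Sigma^p \cap \O_F^\times$ is a finite index subgroup of $\O_F^\times$, and so is $A^2 = \{u^2 : u \in A\}$ since $A/A^2$ is a finite $2$-group by Dirichlet. Next, apply the corollary of Chevalley's lemma to $A^2$ to obtain a compact open $V \subseteq \widehat{\O}_F^\times$ with $V_\P = \O_\P^\times$ at all primes $\P \mid p$ and $\O_F^\times \cap V \subseteq A^2$. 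Finally, define
\[K'^p := K^p \cap \nu^{-1}(V^p),\]
where $V^p$ denotes the prime-to-$p$ part of $V$. This is compact open in $K^p$, normal (since $\nu$ lands in the abelian torus $T = \Res_{F/\Q}\G_m$), and of finite index (since $V^p$ meets $\nu(K^p)$ in a finite index subgroup of the compact open group $\nu(K^p)$).

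\textbf{Verification and main obstacle.} To prove the reverse inclusion, let $\epsilon \in \nu(K'_pK'^p) \cap \O_{F,+}^\times$. The local analysis $\epsilon \in \nu(K'_p) \subseteq \nu(K_{hs}) \subseteq \O_\P^\times = V_\P$ at $\P \mid p$ and $\epsilon \in \nu(K'^p) \subseteq V^p$ away from $p$ gives $\epsilon \in \O_F^\times \cap V \subseteq A^2$, so $\epsilon = u^2$ for some $u \in A$. By the definition of $A$, $\iota(u) \in K^p$; and $\nu(\iota(u)) = u^2 = \epsilon \in V^p$ forces $\iota(u) \in K^p \cap \nu^{-1}(V^p) = K'^p$; and at $p$, $\iota(u)_p \in K_{hs} \subseteq K'_p$ since the scalar action of $u \in \O_p^\times$ preserves the self-dual lattice $\Lambda$. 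Therefore $\iota(u) \in K'_pK'^p \cap \iota(\O_F^\times)$ and $\epsilon = u^2$ lies in its square. The type (A even) argument is formally identical after replacing $\O_F^\times$ by $\O_{F_c}^\times$, the scalar embedding by $\iota: \Res_{F_c/\Q}\G_m \hookrightarrow G$, the squaring character by the norm $N_{F_c/F}$, and the subgroup $A^2$ by $N_{F_c/F}(A)$, which remains of finite index in $\O_F^\times$ since $N_{F_c/F}(\O_{F_c}^\times)$ is of finite index in $\O_{F,+}^\times$. The main conceptual subtlety, and where I expect the main obstacle, is the sharpening from the naive $(\O_F^\times)^2$ (respectively $N_{F_c/F}(\O_{F_c}^\times)$) to $A^2$ (respectively $N_{F_c/F}(A)$): with the naive choice one only learns that $\epsilon$ has a square or norm root in $\O_F^\times$ or $\O_{F_c}^\times$, but not that this root lies in $K'^p$ under $\iota$, and a two-step approach immediately runs into the local sign (or norm-one kernel) ambiguity coming from $\prod_\P \{\pm 1\} \subseteq \widehat{\O}_F^\times$. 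Encoding the constraint $\iota(u) \in K^p$ into the very subgroup fed to Chevalley is what makes the single application succeed.
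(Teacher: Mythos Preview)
Your overall strategy is the same as the paper's: apply the corollary of Chevalley's lemma to a finite-index subgroup of squares (or norms), then set $K'^p = K^p \cap \nu^{-1}(V^p)$. However, there is a genuine gap in your verification step. You write ``at $p$, $\iota(u)_p \in K_{hs} \subseteq K'_p$'', but the inclusion $K_{hs} \subseteq K'_p$ is not part of the hypotheses: the lemma allows $K_p$ to be \emph{any} compact open subgroup of $G(\Q_p)$, and $K'_p$ to be \emph{any} compact open containing $K_p$. In particular $K'_p$ can be a proper subgroup of $K_{hs}$ (take $K'_p = K_p$ for a deep congruence subgroup $K_p$), and then your argument gives no reason for $\iota(u)_p$ to lie in $K'_p$. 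The root cause is that your subgroup $A = \Sigma^p \cap \O_F^\times$ only records the constraint $\iota(c) \in K^p$ \emph{away from $p$}; nothing forces the square root $u$ to satisfy $\iota(u)_p \in K_p$.

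The fix is exactly what the paper does: feed the $p$-part into Chevalley as well. Replace your $A$ by $A := K_pK^p \cap \O_F^\times$ (via $\iota$), i.e.\ also demand $\iota(c)_p \in K_p$. This is still of finite index in $\O_F^\times$, so Chevalley's corollary applies to $A^2$ and yields $V$ as before. Now for $\epsilon \in \nu(K'_pK'^p) \cap \O_{F,+}^\times$ one has $\epsilon \in V$ (at $p$ because $\nu(K'_p) \subset \O_p^\times$ by compactness, not via $K_{hs}$), hence $\epsilon = u^2$ with $u \in A$, and then $\iota(u)_p \in K_p \subseteq K'_p$ comes for free. The away-from-$p$ step (using $\nu(\iota(u)) = u^2 \in V^p$ to upgrade $\iota(u) \in K^p$ to $\iota(u) \in K'^p$) is exactly as you wrote it. With this single adjustment your proof coincides with the paper's; your closing remark about why one must sharpen the subgroup fed to Chevalley is on point, you just did not sharpen it quite enough.
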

\begin{proof}
    We will only consider the case of type (C), the case of type (A even) can be handled analogously.

    First note that for any compact open subgroup $K \subset G(\A_f)$, the inclusion $(K \cap \O_F^\times)^2 \subset \nu(K) \cap \O_{F,+}^\times$ always holds, so that we only have to deal with the reverse inclusion. 
    
    We apply the corollary of Chevalley's Lemma to the finite index subgroup $(K_pK^p \cap \O_F^\times)^2 \subset \O_F^\times$. Thus we obtain a compact open subgroup $U \subseteq \widehat{\O}_F^\times$ with $U_\P = \O_\P^\times$ for all $\P$ lying above $p$, such that $U \cap \O_F^\times \subseteq (K_pK^p \cap \O_F^\times)^2$. This implies that for any $K'_p$ containing $K_p$
    \[U \cap \nu(K'_pK^p) \cap \O_{F,+}^\times \subset U \cap \O_F^\times \subset U \cap (K'_pK^p \cap \O_F^\times)^2.\]
    Now let $K'^p = \nu^{-1}(U^p) \subset K^p$. Then $U \cap (K'_pK^p \cap \O_F^\times)^2 \subset (K'_pK'^p \cap \O_F^\times)^2$, and $\nu(K'_pK'^p) \subset U \cap \nu(K'_pK^p)$.
\end{proof}

While the negative example showed that we cannot expect the polarization action to \textit{always} be free eventually if we are only willing to pass to deep enough level, Lemma \ref{magic_lemma} makes it clear that we can at least find a cofinal system of compact open subgroups for which the polarization action is in fact free. We will call such level subgroups \textbf{good} level subgroups from now on.

Lemma \ref{magic_lemma} is even a bit stronger than that. While it is most likely not true that for any tame level $K^p$ we can find a finite index subgroup $K'^p \subset K^p$ such that $K_pK'^p$ is good \textit{uniformly for any} $K^p$, we at least know that once we have given ourselves a wild level subgroup $K_p \subset G(\Q_p)$ to start with, we can find a uniform level subgroup $K'^p \subset K^p$ such that $K'_pK'^p$ is good for any wild level subgroup $K'_p \supset K_p$ (in particular hyperspecial level subgroups containing $K_p$). 

\begin{definition}
    A level subgroup $K_pK^p \subset G(\A_f)$ is called \textbf{uniformly good} if for any wild level subgroup $K'_p \supset K_p$, the level subgroup $K_p'K^p$ is good.  
\end{definition}

Note that the property of being uniformly good is stable under conjugation. This follows from the fact that the condition $\nu(K) \cap \O_{F,+}^\times = (K \cap \O_F^\times)^2$ is clearly invariant under conjugation (similarly for type (A even)). Thus Hecke action will preserve the set of uniformly good level structures.

All in all, we find that the system of uniformly good level subgroups is a cofinal system in the system of all level subgroups. Choosing such a uniformly good level subgroup $K$, the quotient scheme $\M_{\alpha,K}/\Delta(K)$ exists and is quasi-projective and smooth, either over $E_0$ or $\O_{E_0} \otimes \Z_{(p)}$ (depending on if we chose hyperspecial level at $p$ or not).

Thus we can make the following definition.
\begin{definition}
    For any sufficiently small uniformly good level subgroup $K \subset G(\A_f)$, we set $\M_{G,\alpha,K} := \M_{\alpha,K}/\Delta(K)$.
\end{definition}

From these quotient spaces, we can obtain a model for the full abelian-type Shimura variety we are interested in. Namely, fix a set $\Psi$ of representatives of the (finite) double coset space
\[\O_{F,(p),+}^\times \backslash \A_{F,f}^{p,\times} /\widehat{\O}_{F}^{(p),\times}.\]
Fix a uniformly good level structure $K$ with hyperspecial level at $p$. Then the disjoint union
\[\coprod_{\alpha \in \Psi} \M_{G,\alpha,K} = \coprod_{\alpha \in \Psi} \M_{\alpha,K}/\Delta(K)\]
constitutes a smooth quasi-projective integral model of the abelian-type Shimura variety at level $K$ over $\O_{E_0} \otimes \Z_{(p)}$. We can proceed similarly if we add level at $p$ to the situation.

Thus it suffices to construct a hybrid Igusa stack for each parameter $\alpha \in \A_{F,f}^{p,\times}$ separately, and then take their disjoint union accordingly. 
\begin{convention}
    From now on, if the parameter $\alpha \in \A_{F,f}^{p,\times}$ is understood from context or does not play a role in the discussion, we will drop it from our notation.
\end{convention}

\section{Perfectoid spaces, diamonds and v-stacks}
In this subsection, we will briefly review the theory of perfectoid spaces, diamonds and the v-topology in order to fix our notation. 

\subsection{Perfectoid spaces}
\begin{definition}
    A topological ring $R$ is called a \textbf{Huber ring} if it admits an open subring $R_0$ that is adic with respect to a finitely generated ideal $I$. A Huber ring is called \textbf{Tate} if it contains a \textbf{pseudo-uniformizer}  $\varpi \in R$, i.e. a topologically nilpotent unit.
\end{definition}

\begin{definition}
    A Tate ring $R$ is called \textbf{perfectoid} if it is complete, uniform (i.e. the subring of power-bounded elements $R^\circ \subset R$ is bounded) and there exists a pseudo-uniformizer $\varpi \in R$ such that $\varpi^p$ divides $p$ in $R^\circ$ and the Frobenius map
    \[\Phi: R^\circ/\varpi \to R^\circ/\varpi^p\]
    is an isomorphism.
\end{definition}

\begin{definition}
    Let $R$ be a perfectoid Tate ring. The \textbf{tilt} of $R$ is the topological ring
    \[R^\flat = \invlim_{x \mapsto x^p} R\]
\end{definition}

Let us expand on this a little bit. The topology of $R^\flat$ is the inverse limit topology. The ring structure is given as follows: First, we have multiplicative bijective maps

\[\invlim_{\Phi} R^\circ \longrightarrow \invlim_\Phi R^\circ/p \longrightarrow \invlim_\Phi R^\circ/\varpi\]
that equip the left-hand side with a ring structure. The inverse maps are given by mapping $x = (x_0,x_1,...)$ to $(x^\sharp, (x^{1/p})^\sharp, (x^{1/{p^2}})^\sharp,...) \in \invlim_{\Phi} R^\circ$, where $x^\sharp = \lim_{i \to \infty} \tilde{x_i}^{q^i}$, the $\tilde{x_i}$ being lifts of $x_i$ to $R^\circ$.

Now take any preimage $\varpi^\flat$ of $\varpi$ under the map
\[\invlim_{\Phi} R^\circ \iso \invlim_\Phi R^\circ/\varpi^p \to R^\circ/\varpi^p\]
This way we see that $R^\flat$ is complete, Tate and even perfectoid $\F_p$-algebra with ring of power-bounded elements $R^{\flat\circ} \iso \invlim_{\Phi} R^\circ$ and pseudo-uniformizer $\varpi^\flat$, so in particular $R^\flat = R^{\flat\circ}[1/{\varpi^\flat}]$.

The projection to the first component map $R^\flat \to R, x \mapsto x^\sharp$ (this coincides with the above notion of $(-)^\sharp$) induces a ring isomorphism $R^{\flat\circ}/\varpi^\flat \iso R^\circ/\varpi$. Additionally we have the following

\begin{lemma}
The set of open, bounded and integrally closed subrings $R^+ \subset R^\circ$ is in bijection with the set of integral subrings $R^{\flat +} \subset R^{\flat\circ}$, via $R^+ \mapsto R^{\flat +} := \invlim_{x \mapsto x^p} R^+$. Furthermore $R^{\flat +}/\varpi^\flat \iso R^+/\varpi$.
\end{lemma}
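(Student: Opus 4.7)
The plan is to reduce both sides of the claimed bijection to integrally closed subrings of the common quotient $R^\circ/\varpi \iso R^{\flat\circ}/\varpi^\flat$, and then to identify the inverse-limit description on the tilt with the preimage under this isomorphism.

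The first key step is to prove that every open, bounded, integrally closed subring $R^+ \subset R^\circ$ automatically contains $\varpi R^\circ$. Openness of $R^+$ and boundedness of $R^\circ$ give $\varpi^n R^\circ \subset R^+$ for some $n \geq 1$; then for any $r \in R^\circ$ the element $\varpi r$ satisfies the monic relation $X^n - \varpi^n r^n = 0$ with $\varpi^n r^n \in R^+$, so $\varpi r \in R^+$ by integral closedness. A standard argument then shows that $R^+ \mapsto R^+/\varpi$, viewed as its image inside $R^\circ/\varpi$, sets up a bijection between open bounded integrally closed subrings of $R^\circ$ and integrally closed subrings of $R^\circ/\varpi$, with inverse given by preimage under the projection. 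The identical argument applied on the tilted side, composed with the ring isomorphism $R^{\flat\circ}/\varpi^\flat \iso R^\circ/\varpi$, yields the desired bijection together with the asserted identification $R^{\flat +}/\varpi^\flat \iso R^+/\varpi$.

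It remains to check that this bijection coincides with the formula $R^+ \mapsto \invlim_\Phi R^+$. Let $R^{\flat+}$ denote the preimage of the image of $R^+$ under the projection $R^{\flat\circ} = \invlim_\Phi R^\circ \to R^\circ/\varpi$ sending $(x_0, x_1, \dots)$ to $x_0 \bmod \varpi$. Since $\varpi R^\circ \subset R^+$, the condition $x_0 \bmod \varpi \in R^+/\varpi$ forces $x_0 \in R^+$. For $i \geq 1$, the relation $x_{i-1} = x_i^p$ then exhibits $x_i \in R^\circ$ as a root of the monic polynomial $X^p - x_{i-1} \in R^+[X]$, so $x_i \in R^+$ by integral closedness; inductively all coordinates lie in $R^+$. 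The reverse inclusion is immediate, hence $R^{\flat+} = \invlim_\Phi R^+$.

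The main obstacle is the automatic containment $\varpi R^\circ \subset R^+$ proved in the second paragraph. Without it, the reductions mod $\varpi$ on the two sides would not line up cleanly, and the inverse-limit formula would fail to correctly describe the tilted subring. Once this inclusion is in hand, the remainder is a clean combination of the correspondence between integrally closed subrings of a ring and of its quotient, the tilting isomorphism of the mod-pseudouniformizer quotients, and one further application of integral closedness to extract the higher coordinates in $\invlim_\Phi R^+$.
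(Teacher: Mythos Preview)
The paper states this lemma without proof, treating it as a standard background fact from the theory of perfectoid rings. Your argument is correct and is essentially the standard one: the containment $\varpi R^\circ \subset R^+$ (via the monic relation $X^n - \varpi^n r^n$) reduces the classification on each side to integrally closed subrings of the common quotient $R^\circ/\varpi \iso R^{\flat\circ}/\varpi^\flat$, and the final inductive step using $x_i^p = x_{i-1}$ identifies the abstract bijection with the explicit inverse-limit formula. There is nothing to compare against in the paper itself; your write-up would serve as a complete proof where the paper offers none.
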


\begin{definition}\label{thp}
    An \textbf{affinoid (perfectoid) Tate-Huber pair} is a pair $(R,R^+)$, where $R$ is a (perfectoid) Tate ring and $R^+ \subset R^\circ$ is an open, bounded, integrally closed subring. A morphism of affinoid Tate-Huber pairs $(R,R^+) \to (R', R'^+)$ is a morphism of topological rings that sends $R^+$ into $R'^+$. The tilt of an affinoid perfectoid Tate-Huber pair $(R,R^+)$ is the affinoid perfectoid Tate-Huber pair $(R^\flat,R^{\flat +})$.
\end{definition}

\begin{definition}
    A \textbf{perfectoid space} $X$ is an adic space that admits a cover by affinoid opens that are isomorphic to affinoid adic spaces of the form $\Spa(R,R^+)$, where $(R,R^+)$ is an affinoid perfectoid Tate-Huber pair.
\end{definition}

To a perfectoid space $X$ we can functorially associate its tilt $X^\flat$ by tilting its rational open subsets and then gluing. There is a homeomorphism $|X| \iso |X^\flat|$ that preserves rational open subsets.

\begin{example}
    Given a non-Archimedean complete algebraically closed field $C$ with ring of integers $\O_C$ and integral subring $C^+ \subset \O_C$, then $\Spa(C,C^+)$ is perfectoid. We will call this a \textbf{geometric point} in analogy with the scheme-theoretic setting. If $C^+ = \O_C$, we call it a \textbf{rank 1 geometric point}.
\end{example}

\begin{definition}
    Let $X$ be a perfectoid space in characteristic $p$. An \textbf{untilt of $X$} is a pair $(X^\sharp, \iota)$, consisting of a perfectoid space $X^\sharp$ (of any characteristic), and a fixed isomorphism $\iota: (X^\sharp)^\flat \iso X$.
\end{definition}
To simplify notation we will often drop the explicit isomorphism $\iota$ and just write $X^\sharp$ for an untilt of $X$.

\begin{remark}\label{multiple_untilts}
    Assume $\Spa(R,R^+)$ is affinoid perfectoid in characteristic $p$ with pseudo-uniformizer $\varpi \in R^+$, and we are given two untilts $\Spa(R^{\sharp_1},R^{\sharp_1+})$ and $\Spa(R^{\sharp_2}, R^{\sharp_2+})$. Then for $i \in \{1,2\}$, there is Fontaine's theta map 
    $\theta: W(R^+) \to R^{\sharp_i+}$
    whose kernel is generated by an element $\xi$. We may replace $\varpi$ by one of its $p^n$-th roots to achieve that $\xi = p + [\varpi]a$ for some $a \in W(R^+)$. By quotienting out $(p,[\varpi])$, this yields a map
    \[R^{\sharp_i+} = W(R^+)/(p+[\varpi]a) \twoheadrightarrow R^+/\varpi\]
    
    Choosing $\varpi$ sufficiently small, we can hence simultaneously get two quotient maps
    \[R^{\sharp_i+} \twoheadrightarrow R^+/\varpi\]
    for $i =1,2$. This observation will be very important later when defining the hybrid Igusa stack.
\end{remark}

\subsection{The pro-étale and v-topology}
Recall the following classes of morphisms between perfectoid spaces.

\begin{definition}
    Let $f: X \to Y$ be a morphism of perfectoid spaces.
    \begin{enumerate}[label=(\roman*)]
        \item $f$ is \textbf{finite étale} if for any affinoid perfectoid open $V = \Spa(R,R^+) \subset Y$, the preimage $f^{-1}(V) = \Spa(S,S^+)$ is again affinoid perfectoid and the induced ring map $R \to S$ is finite étale.
        \item $f$ is \textbf{étale} if for any $x \in X$ there is an open neighbourhood $V \subset X$ of $x$, an affinoid perfectoid open $U = \Spa(R,R^+) \subset Y$ such that $f(V) \subset U$ and such that $f|_V$ factors as a composition of an open immersion followed by a finite étale morphism.
        \item $f$ is \textbf{pro-étale} if for any $x \in X$ there is an affinoid perfectoid open $V \subset X$ of $x$, an affinoid perfectoid open $U = \Spa(R,R^+) \subset Y$ such that $f(V) \subset U$ and $f$ can be written as a limit, over a small cofiltered index category $I$, of étale maps $V_i \to U$ with $V_i$ being affinoid perfectoid for all $i \in I$.
    \end{enumerate}
\end{definition}

Denote by $\Perfd$ the category of perfectoid spaces, and by $\Perf$ the full subcategory of perfectoid spaces in characteristic $p$. 

\begin{definition}
    Recall the following Grothendieck topologies on $\Perf$ (or $\Perfd$):
    \begin{enumerate}[label=(\roman*)]
        \item The \textbf{v-topology} on $\Perf$ is the Grothendieck topology which has as coverings jointly surjective maps $\{f_i: X_i \to Y\}_{i \in I}$ such that for every quasicompact open $U \subset Y$ there is a finite subset $J \subset I$ and quasicompact open $V_j \subset X_j$ for all $j \in J$ such that $\bigcup_{j \in J} f(V_j) = U$.
        \item The \textbf{pro-étale topology} on $\Perf$ is the Grothendieck topology which has as coverings those v-covers $\{f_i: X_i \to Y\}_{i \in I}$ such that all $f_i$ are pro-étale.
    \end{enumerate}
\end{definition}
We call $\Perf$ (or $\Perfd)$ equipped with the pro-étale resp. v-topology the \textbf{big pro-étale} resp. \textbf{v-site}. There are some set-theoretic issues arising here, which can be avoided by using cut-off cardinals. Here, we will avoid these issues by simply ignoring them.

By \cite[8.6, 8.7]{Sch22} both the v-topology and the pro-étale topology on $\Perf$ and $\Perfd$ are subcanonical. Thus we will always confuse a perfectoid space distinguish between a perfectoid space and the v-sheaf it represents.

We can also consider the small pro-étale site $X_{\text{pro-ét}}$ of a perfectoid space $X$. It has as objects perfectoid spaces $Y$ that are pro-étale over $X$, and as coverings it has jointly surjective morphisms with the same quasicompactness condition as in the definition of the big pro-étale site. 

Within the v-topology, it is possible to basically rip apart perfectoid spaces into points. More precisely, the subclass of strictly totally disconnected perfectoid spaces form a basis for the v-topology. Recall that we call a perfectoid space $X$ \textbf{totally} (resp. \textbf{strictly totally}) \textbf{disconnected} if it is quasicompact, quasiseparated and every open (resp. étale) cover of $X$ splits. 

\begin{prop}[\cite{Sch22}, 1.15]
    A perfectoid space $X$ is (strictly) totally disconnected if and only if it is affinoid and every connected component of $X$ is of the form $\Spa(K,K^+)$ with $K$ an (algebraically closed) perfectoid field and $K^+$ an open bounded valuation subring.
\end{prop}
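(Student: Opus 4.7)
The plan is to prove both directions of the equivalence separately, with the forward direction requiring more work and the reverse direction handled by a gluing argument over quasicompact pieces.

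For the forward direction, assume $X$ is (strictly) totally disconnected. First I would establish affinoidness by choosing a finite affinoid open cover $\{U_i\}$ of $X$ (via quasicompactness) and applying the splitting hypothesis: a section of $\coprod U_i \to X$ exhibits $X$ as a finite disjoint union of clopen affinoid pieces, which is itself affinoid perfectoid. To understand a connected component $C_x$ through a point $x \in X$, I would express it as the cofiltered intersection of all clopen neighborhoods of $x$, giving $C_x \iso \Spa(K, K^+)$ as a cofiltered limit of affinoid perfectoid spaces along clopen inclusions. The splitting property descends to $C_x$: any open (resp.\ étale) cover of $C_x$ spreads out to a cover of some clopen neighborhood of $x$ in $X$, which then splits by assumption. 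A connected qc perfectoid space in which every open cover splits must be of the form $\Spa(K, K^+)$ with $K$ a perfectoid field and $K^+$ an open bounded valuation subring: any element $f$ in the structure ring that is neither a unit nor topologically nilpotent would produce a proper clopen decomposition via rational subsets of the form $\{|f| \leq |\varpi|^n\}$ versus $\{|f| \geq |\varpi|^n\}$, contradicting connectedness after splitting. In the strictly totally disconnected case, the étale splitting property additionally forces every finite separable extension of $K$ to split, hence $K$ is algebraically closed.

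For the reverse direction, assume $X$ is affinoid with connected components of the described form, and let $\{Y_j \to X\}$ be an open (resp.\ étale) cover. On each connected component $\Spa(K_i, K_i^+)$ the pullback cover admits a section, since any open (resp.\ étale) cover of such a point splits trivially using that $K_i$ is a perfectoid (resp.\ algebraically closed perfectoid) field. I would then spread each pointwise section out to a section over a clopen neighborhood of the connected component, using that connected components are cofiltered limits of clopen neighborhoods and that sections of (étale) morphisms spread out under such limits in the affinoid perfectoid setting. Quasicompactness supplies a finite cover of $X$ by such clopen neighborhoods, and a further clopen refinement into pairwise disjoint pieces lets one glue the local sections to a global section.

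The main obstacle I anticipate is the rigorous control of the cofiltered limit structure describing connected components: spreading out both clopen decompositions and sections of (étale) morphisms from $\Spa(K, K^+)$ to a clopen neighborhood requires the standard approximation results for affinoid perfectoid spaces along cofiltered limits in diamond theory. Once these are in hand, both the descent of the splitting property in the forward direction and the gluing of sections in the reverse direction reduce to routine verifications.
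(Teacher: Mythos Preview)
The paper does not prove this proposition at all; it simply quotes it from \cite{Sch22} (where it appears as Lemma 7.5 in the body of that paper) and uses it as a black box. So there is nothing in this paper to compare your proposal against.

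That said, your outline is close in spirit to the actual argument in \cite{Sch22}, with two imprecisions worth flagging. First, the rational subsets $\{|f| \leq |\varpi|^n\}$ and $\{|f| \geq |\varpi|^n\}$ are not a clopen decomposition; they are an open cover with overlap. The correct reasoning is that splitting plus connectedness forces one of the two to equal the whole component, and running this over all $n$ shows every nonzero $f$ is a unit, so the ring is a perfectoid field $K$. Second, you do not address why $K^+$ is a \emph{valuation} ring: an open bounded integrally closed subring of $\O_K$ need not be a valuation ring in general, and this has to be extracted from the splitting hypothesis as well (via covers of the form $\{|f|\le 1\}\cup\{|f|\ge 1\}$ for $f\in K^\times$, forcing $f\in K^+$ or $f^{-1}\in K^+$). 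Once these two points are fixed, your forward direction matches Scholze's; your reverse direction via spreading out sections from connected components is exactly the standard argument.
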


\subsection{v-stacks}
The Igusa stack is supposed to be a stack on the big v-site $\Perf$, so for convenience we briefly recall the relevant notions.
\begin{definition}
    A \textbf{v-stack} is a contravariant 2-functor $F: \Perf \to \Grpd$ into the 2-category $\Grpd$ of groupoids, that satisfies descent for v-covers. 

    Concrete, satisfying descent means the following. Let $X \to Y$ be any v-cover of perfectoid spaces. Write $p_1, p_2: X \times_Y X \to X$ and $p_{12},p_{23},p_{13}: X \times_Y X \times_Y X \to X \times_Y X$ for the projections. Denote by $F(X/Y)$ the category of descent data. Its objects are tuples $(s,\alpha)$ with $s \in F(X)$ and $\alpha: p_1^*s \overset{\sim}{\rightarrow} p_2^*s$ an isomorphism that satisfies the cocycle condition
    \[p_{23}^*\alpha \text{ } \circ p_{12}^*\alpha = p_{13}^*\alpha.\]

    Then a contravariant 2-functor $F: \Perf \to \Grpd$ satisfies descent for v-covers if, for any v-cover $X \to Y$ of perfectoid spaces, the natural functor $F(Y) \to F(X/Y)$ is an equivalence of categories. 
\end{definition}

All v-stacks $X$ that we are going to encounter in this thesis will be \textbf{small}, which means that they admit a presentation
\[R = Y \times_X Y \rightrightarrows Y \to X,\]
where $Y$ is a perfectoid space, and $Y \times_X Y$ is a small v-sheaf, i.e. admits a surjection of v-sheaves from a perfectoid space. 

The presentation $R \rightrightarrows Y \to X$ allows us to define the underlying topological space of a small v-stack. Namely, let $\tilde{R} \to R$ be a surjection from a perfectoid space, then $|X|:= |Y|/|\tilde{R}|$. This is independent of any choices made (see \cite[12.7,12.8]{Sch22}).

The Igusa stack for our abelian-type Shimura variety will arise from the following general procedure.
\begin{example}
    For any topological space $T$ we can define the v-sheaf $\underline{T}: S \mapsto \Hom(|S|,T)$ on $\Perf$. Assume we are given a v-sheaf $X$ on $\Perf$ equipped with an action by a topological group $G$, we can consider the stacky quotient $[X/\underline{G}]$, which is the v-sheaf theoretic coequalizer of the projection and action map
    \[X \times \underline{G} \rightrightarrows X\]
    This is a v-stack. If $G$ is locally profinite and $X$ is a perfectoid space, then $X \times \underline{G}$ is representable by a perfectoid space. Also both the projection and action map are pro-étale. If $X$ is a small v-sheaf, then $[X/\underline{G}]$ is a small v-stack.
\end{example}

\begin{definition}
    Given a diagram $X \overset{f}{\to} Z \overset{g}{\leftarrow} Y$ of small v-stacks, its \textbf{fiber product} $X \times_Z Y$ is the presheaf of groupoids that sends $S \in \Perf$ to the groupoid of triples $(x,y, \varphi: f(x) \overset{\sim}{\to} g(y))$ with $x \in X(S)$, $y \in Y(S)$ (a morphism $(x,y,\varphi) \to (x',y',\varphi')$ of such triples being pairs of maps $(\alpha: x \to x', \beta: y \to y')$ such that $\varphi' \circ f(\alpha) = g(\beta) \circ \varphi$).
\end{definition}
By \cite[12.10]{Sch22}, $X \times_Z Y$ is again a small v-stack. 

\begin{definition}
    A v-stack $X$ is \textbf{quasicompact} if there is a surjection of v-stacks $Y \twoheadrightarrow X$ with $Y$ an affinoid perfectoid space. 
\end{definition}
In particular if $X$ is quasicompact, then $X$ is a small v-stack and its underlying topological space $|X|$ is quasicompact.

\begin{definition}[\cite{Sch22}, 10.7]
    Let $f: Y \to X$ be a morphism of v-stacks. We call the morphism
    \begin{enumerate}[label=(\roman*)]
        \item \textbf{0-truncated} if for all $S \in \Perf$ the morphism of groupoids $f(S): Y(S) \to X(S)$ is faithful. Equivalently, if the diagonal map $\Delta_f: Y \to Y \times_X Y$ is an injection. 
        \item \textbf{quasicompact} if for any affinoid perfectoid space $S$ with a morphism $S \to X$, the fiber product $Y \times_X S$ is quasicompact.
        \item \textbf{quasiseparated} if the diagonal $\Delta_f$, which is 0-truncated, is qcqs.
        \item an \textbf{open} (resp. \textbf{closed}) immersion if for every (totally disconnected) perfectoid space $T$ with a morphism $T \to X$, the pullback $Y \times_X T \to T$ is represented by an open (resp. closed) immersion.
        \item \textbf{separated} if the diagonal $\Delta_f$ is a closed immersion (thus automatically 0-truncated).
        \item \textbf{partially proper} if it is separated and for every diagram
\[\begin{tikzcd}
	{\Spa(R,R^\circ)} && Y \\
	\\
	{\Spa(R,R^+)} && X
	\arrow[from=1-1, to=1-3]
	\arrow["f", from=1-3, to=3-3]
	\arrow[from=3-1, to=3-3]
	\arrow[from=1-1, to=3-1]
	\arrow[dashed, from=3-1, to=1-3]
\end{tikzcd}\]   
    where $R$ is any perfectoid Tate ring with an open, bounded and integrally closed subring $R^+ \subset R$, there exists a unique dotted arrow making the diagram commute.
    \end{enumerate}
\end{definition}


\subsection{Diamonds}
\begin{definition}
    A \textbf{diamond} is a pro-étale sheaf $\D$ on $\Perf$ that can be written as $X/R$ with $X$ and $R$ being perfectoid spaces and $R \subset X \times X$ being a pro-étale equivalence relation. 
\end{definition}

By \cite[11.9]{Sch22}, diamonds are small v-sheaves, hence in the context of diamonds and morphisms between them we have the same terminology as developed in the last section. 

One of the most important examples of a diamond is the following.
\begin{example}
    Consider the perfectoid field $\Q_p^{\cycl} := \widehat{\Q_p(\mu_{p^\infty})}$. We define
    \[\Spd\Q_p := \coeq(\Spa(\Q_p^{\cycl})^\flat \times \underline{\Gal(\Q_p^{\cycl}/\Q_p)} \rightrightarrows \Spa(\Q_p^{\cycl})^\flat)\]
    This is a diamond with underlying topological space being a point. 
\end{example}

\begin{theorem}[\cite{SW20}, 8.4.2]
    The category of perfectoid spaces over $\Spa\Q_p$ is equivalent to the category of perfectoid spaces $X$ in characteristic $p$, equipped with a structure morphism $X \to \Spd \Q_p$ as sheaves on $\Perf$.
\end{theorem}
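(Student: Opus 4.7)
The plan is to combine two ingredients: the classical tilting equivalence of Scholze over the perfectoid field $\Q_p^{\cycl}$, and pro-étale Galois descent along the pro-Galois extension $\Q_p^{\cycl}/\Q_p$ with group $\Gamma := \Gal(\Q_p^{\cycl}/\Q_p)$. By the construction of $\Spd \Q_p$ as the v-coequalizer recorded above, a structure map $X \to \Spd \Q_p$ for a characteristic-$p$ perfectoid space $X$ encodes, v-locally, a $\Gamma$-torsor $\tilde X \to X$ together with a $\Gamma$-equivariant map $\tilde X \to \Spa(\Q_p^{\cycl})^\flat$. Matching this datum through tilting will produce the sought-after untilt.

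First I would establish the tilting equivalence on the base $\Spa \Q_p^{\cycl}$: the functor $Y \mapsto Y^\flat$ is an equivalence from perfectoid spaces over $\Spa \Q_p^{\cycl}$ to perfectoid spaces over $\Spa(\Q_p^{\cycl})^\flat$, with inverse given by untilting. Affinoid-locally this is the equivalence of affinoid perfectoid Tate-Huber pairs $(R,R^+) \leftrightarrow (R^\flat,R^{\flat+})$ introduced in Definition \ref{thp}; the untilting of $(R^\flat,R^{\flat+})$ is reconstructed from Fontaine's theta map $\theta: W(R^{\flat+}) \to R^+$, whose kernel is generated by a primitive element of the form $p + [\varpi^\flat]a$ as recorded in Remark \ref{multiple_untilts}. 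Compatibility of tilting with rational opens and fiber products glues the affinoid equivalence into the global statement.

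Second, I would descend along $\Gamma$. Applying the tilting equivalence of Step 1 to $\tilde X$ produces an untilt $\tilde X^\sharp \to \Spa \Q_p^{\cycl}$, and naturality of tilting promotes the $\Gamma$-action on $\tilde X$ to a compatible $\Gamma$-action on $\tilde X^\sharp$ lying above the Galois action on $\Spa \Q_p^{\cycl}$. Since $\Spa \Q_p^{\cycl} \to \Spa \Q_p$ is a pro-étale Galois cover with group $\Gamma$, pro-étale descent then yields a perfectoid space $X^\sharp \to \Spa \Q_p$ whose base change to $\Spa \Q_p^{\cycl}$ recovers $\tilde X^\sharp$. Conversely, tilting a perfectoid space $Y \to \Spa \Q_p$ gives $Y^\flat$, equipped with the tautological map $Y^\flat \to \Spd \Q_p$ coming from the $\Gamma$-cover $Y \times_{\Spa \Q_p} \Spa \Q_p^{\cycl} \to Y$. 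That these two constructions are mutually inverse follows by applying Step 1 $\Gamma$-equivariantly.

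The main obstacle is the effectivity of this Galois descent: one has to verify that pro-étale descent for perfectoid spaces along $\Spa \Q_p^{\cycl} \to \Spa \Q_p$ is effective and that the descended ring is again perfectoid. Both points rest on the almost purity theorem, which controls perfectoid algebras under finite étale extensions and ensures that $\Gamma$-invariants of a perfectoid $\Q_p^{\cycl}$-algebra with compatible $\Gamma$-action are perfectoid over $\Q_p$. Once this is in place, functoriality and full faithfulness are automatic from Step 1, and one recovers the stated equivalence.
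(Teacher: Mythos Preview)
The paper itself gives no proof of this statement; it is simply cited from \cite[8.4.2]{SW20}. Your outline is the standard argument and is correct: the tilting equivalence over the perfectoid field $\Q_p^{\cycl}$, followed by descent along the pro-\'etale $\underline{\Gamma}$-torsor $\Spa(\Q_p^{\cycl})^\flat \to \Spd\Q_p$.

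Two small sharpenings. First, the $\Gamma$-torsor $\tilde X = X \times_{\Spd\Q_p} \Spa(\Q_p^{\cycl})^\flat \to X$ is pro-\'etale on the nose (not merely ``v-locally''), and is representable by a perfectoid space because $\Gamma \iso \Z_p^\times$ is profinite and pro-\'etale $\underline{\Gamma}$-torsors over perfectoid spaces are inverse limits of finite \'etale covers. Second, for the descent step you correctly flag as the main obstacle, one can bypass a general effectivity statement by constructing $X^\sharp$ directly on affinoids: the map $\Spa(R,R^+) \to \Spd\Q_p$ singles out, via the $\Gamma$-equivariant map $\tilde X \to \Spa(\Q_p^{\cycl})^\flat$, a primitive degree-one ideal $\ker\theta \subset W(R^+)$, and one sets $R^{\sharp+} := W(R^+)/\ker\theta$. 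That this is independent of choices and glues is then checked using the tilting equivalence from your Step~1. This is how the argument is organized in \cite{SW20}, and your invocation of almost purity enters precisely to guarantee that the finite \'etale layers of the tower untilt correctly.
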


In order to turn our hybrid space and our abelian-type Shimura variety into objeccts of $p$-adic geometry, we need to explain how to attach diamonds or v-sheaves to various (formal) schemes.

First, assume $X$ is an analytic adic space over $\Spa\Z_p$ (meaning that it is locally Tate, see \cite[4.3.1]{SW20}). Then we can define a presheaf $X^\diamond$ on $\Perf$ by
\[T \mapsto X^\diamond(T) := \{(T^\sharp, T^\sharp \to X)\}/\iso\]
where $T^\sharp$ is an untilt of $T$ and $T^\sharp \to X$ is a map of adic spaces.
By \cite[10.1.5]{SW20} the presheaf $X^\diamond$ is a diamond. In fact, if $X$ is perfectoid then $X^\diamond = X^\flat$, and more generally if $X$ is analytic, one can cover it with a perfectoid space by a perfectoid equivalence relation in the pro-étale topology, and then tilt both. If $X = \Spa(R,R^+)$ we will write $X^\diamond = \Spd(R,R^+)$, and we will also drop $R^+$ from the notation if $R^+ = R^\circ$.

In the case of a pre-adic space $X$ over $\Spa \Z_p$, one can still consider the above presheaf (with maps of adic spaces replaced by maps of pre-adic spaces), but it does not necessarily yield a diamond, though by \cite[18.1.1]{SW20} it always yields a v-sheaf. In particular this applies to formal schemes $\X$. If $\X = \Spf A$ is formal affine, then $\X^\diamond = \Spd(A,A)$, where $\Spa(A,A)$ is the pre-adic space attached to $\Spf A$. This globalizes and  defines a functor from formal schemes to v-sheaves. 

For schemes over $\Z_p$, there are two possible ways of turning them into diamonds (or v-sheaves), and in general they won't agree. First assume $X= \Spec A$ is affine.
\begin{enumerate}[label=(\roman*)]\label{dia_schemes}
    \item The \textbf{small diamond} $X^\diamond$ attached to $X$ is the v-sheaf on $\Perf$
    \[S \mapsto \{(S^\sharp, f: A \to \O_{S^\sharp}^+(S^\sharp)\}\]
    where $S^\sharp$ is an untilt of $S$ and $f$ is a ring homomorphism. 

    \item The \textbf{big diamond} $X^\Diamond$ attached to $X$ is the v-sheaf on $\Perf$
    \[S \mapsto \{(S^\sharp, f: A \to \O_{S^\sharp}(S^\sharp)\}\]
    where $S^\sharp$ is an untilt of $S$ and $f$ is a ring homomorphism.
\end{enumerate}
As discussed in \cite[2.2]{AGLR22}, these constructions glue to yield functors from the category of schemes over $\Z_p$ to v-sheaves, however in general for non-affine schemes we won't have a nice moduli description as above.

For a scheme $X$ locally of finite type over $\Z_p$, the two diamond functors can be described via $p$-adic completion and adic analytification:
\begin{enumerate}[label=(\roman*)]
    \item $X^\diamond$ is obtained by first $p$-adically completing $X$, which yields a formal scheme over $\Z_p$, and then applying the diamond functor for formal schemes.
    \item $X^\Diamond$ is obtained by applying adic analytification to $X$, which yields an adic space over $\Spa\Z_p$, and then taking its associated diamond.
\end{enumerate}

From the first description of the two diamond functors, it is clear that we have a natural map $X^\diamond \to X^\Diamond$. It is an open immersion if $X$ is separated, and an isomorphism if $X$ is proper.

\section{Passage to the adic world}
In this section, we will introduce multiple incarnations of the hybrid spaces and abelian-type Shimura varieties, introduced in Section \ref{section_sv}, as objects in $p$-adic geometry. We will always assume that $K \subset G(\A_f)$ is a level subgroup, sometimes additionally assumed to be hyperspecial at $p$, and always assumed to be uniformly good when talking about the abelian-type Shimura varieties. Recall that we drop the parameter $\alpha \in \A_{F,f}^{p,\times}$ from our notation unless necessary.

Recall from the introduction that in order to state Conjecture \ref{conjecture}, we needed to fix an isomorphism $\C \iso \overline{\Q}_p$. Let $v$ be the prime of the reflex field $E_0$ above $p$ that is induced by the composition $E_0 \to \C \iso \overline{\Q}_p$. We complete $E_0$ with respect to $v$, which yields the local field $E$, a finite extension of $\Q_p$. Via base change, we obtain the hybrid space $\M_{K}$ and the abelian-type Shimura variety $\M_{G,K}$ over $E$ (respectively their integral models over $\O_E$ if $K$ is hyperspecial at $p$). We will denote these spaces again by $\M_K$ resp. $\M_{G,K}$.

\begin{definition}
    Let $K$ be hyperspecial at $p$. By $\X_K$ we denote the formal scheme over $\Spf\O_E$ arising via $p$-adic completion from $\M_K$, and call it the \textbf{formal hybrid space}. Analogously, we will call $\X_{G,K}$ the \textbf{formal abelian-type Shimura variety}.

    The \textbf{formal hybrid space at infinite level} is the inverse limit in the category of formal schemes $\X = \invlim_{K} \X_K$, where $K$ runs over all hyperspecial level subgroups. We similarly define the \textbf{formal abelian-type Shimura variety at infinite level}.
\end{definition}

The formal hybrid space $\X_{K}$ still has a moduli description in terms of formal abelian schemes with $G$-structure and hybrid $K^p$-level structure. We can also $p$-adically complete the universal abelian variety over $\M_K$ to obtain a universal formal abelian scheme $G$-structure and hybrid $K^p$-level structure.

\begin{definition}
    The \textbf{diamond hybrid space} is the diamond $\cX_K = \M_K^\Diamond$ over $\Spd E$. Similarly, the \textbf{diamond abelian-type Shimura variety} is the diamond $\cX_{G,K} = \M_{G,K}^\Diamond$ over $\Spd E$.
\end{definition}

We can also apply the diamond functor to the universal abelian variety $\cA_{K}/\M_K$ to obtain a proper map of diamonds
\[\pi: \cA_K^\diamond \to \cX_K.\]

\begin{definition}
    The ($p$-adic) \textbf{Tate module} of the universal abelian variety $\cA_K$ is the sheaf of $\underline{\Z_p}$-modules on $\cX_K$
    \[T_p(\cA_K) := \underline{\Hom}_{\underline{\Z_p}}(R^1\pi_* \underline{\Z_p},\underline{\Z_p}).\]
\end{definition}
Via the $\O_B$-action on $\cA_K$, the Tate module also comes equipped with a natural $\O_B$-module structure. 

\begin{definition}
    The \textbf{hybrid space at infinite level at $p$} is the diamond (here $K$ is hyperspecial at $p$)
    \[\cX_{K^p} := \underline{\Isom}_G(T_p(\cA_K),\underline{\Lambda}) \to \cX_{K},\]
    where the isomorphism sheaf is of $\O_B \otimes \Z_p$-linear trivializations of the Tate module that are $\underline{\O_p}$-symplectic similitudes with respect to the lifted Weil pairing $\langle\cdot,\cdot\rangle_\lambda$ on the Tate module and $\langle\cdot,\cdot\rangle_F$ on $\underline{\Lambda}$.
\end{definition}
Alternatively, we can write the hybrid space at infinite level at $p$ as
\[\cX_{K^p} = \invlim_{K_p} \cX_{K_pK^p}\]
in the category of diamonds. 

\begin{definition}
    The \textbf{hybrid space at infinite level} is the diamond
    \[\cX = \invlim_{K^p} \cX_{K^p} = \invlim_K \cX_K.\]
\end{definition}

Our choice of asymmetry in the definition of the hybrid space at infinite level can be explained by our main goal: we want to break apart its $p$-adic geometry via the Hodge-Tate period map and the construction of an Igusa stack. Thus, it makes sense to pay special attention to the infinite level at $p$.

When passing to the abelian-type Shimura varieties, we will completely drop this asymmetry. This is due to the subleties surrounding uniformly good level structures.

\begin{definition}
    The \textbf{abelian-type Shimura variety at infinite level} is the inverse limit
    \[\cX_G := \invlim_{K} \cX_{G,K},\]
    in the category of diamonds, where the inverse limit runs over all uniformly good level subgroups $K \subset G(\A_f)$ (whose $p$-part contains $K_{hs}$).
\end{definition}

Last but not least, we introduce the good reduction loci. First assume that the level subgroup $K \subset G(\A_f)$ is hyperspecial at $p$, then we define
\[\cX_{K}^\circ = (\X_K)_{\eta}^\diamond = \X_K^\Diamond \times_{\Spd \O_E} \Spd E,\]
where by $(-)_\eta$ we denote taking adic generic fiber of a formal scheme. Alternatively, $\cX_K^\circ = \M_K^\diamond$ in terms of the small diamond functor. For any diamond $D$ over $\cX_K$, we define its good reduction locus via pullback as
\[D^\circ = D \times_{\cX_K} \cX_K^\circ,\]
which in particular includes the diamonds $\cX_{K_pK^p}$, $\cX_{K^p}$ and $\cX$. For later use, we remark that $\cX_{K^p}^\circ = \invlim_{K_p} \cX_{K_pK^p}^\circ$ and $\cX^\circ = \invlim_{K_pK^p} \cX^\circ_{K_pK^p}$. For the latter claim, we use that for $K' \subset K$ both hyperspecial at $p$, $\cX_{K}^\circ$ pulls back to $\cX_{K'}^\circ$.

In the abelian-type situation, we can similarly define $\cX_{G,K}^\circ$ for any uniformly good level subgroup $K \subset G(\A_f)$ that is hyperspecial at $p$. For any uniformly good level subgroup $K_pK^p$, we define
\[\cX_{G,K_pK^p}^\circ = \cX_{G,K_pK^p} \times_{\cX_{G,K_{hs}K^p}} \cX_{G,K_{hs}K^p}^\circ,\]
and at infinite level we get
\[\cX_G^\circ = \invlim_{K} \cX_{G,K}^\circ,\]
where $K$ runs over all uniformly good level subgroups.

For later use, we want to see that $\cX^\circ$ still retains a moduli description, in the sense that it is the analytic sheafification of a presheaf which is given in terms of a moduli description.

\begin{lemma}\label{grlocus_moduli}
    Let $K$ be hyperspecial at $p$. The good reduction locus $\cX_{K}^\circ$ is the analytic sheafification of the presheaf $\cX^{\circ,\pre}$ on $\Perf$
    \[S = \Spa(R,R^+) \mapsto \{(S^\sharp, \Spf R^{\sharp +} \to \X_{K})\},\]
    where $S^\sharp = \Spa(R^\sharp, R^{\sharp +})$ is an untilt of $S$ over $\Spa E$.

    Similarly, $\cX_{K_{hs}}^\circ = \invlim_{K^p} \cX^\circ_{K_{hs}K^p}$ is the analytic sheafification of the presheaf on $\Perf$
    \[S = \Spa(R,R^+) \mapsto \{(S^\sharp, \Spf R^{\sharp +} \to \X)\}.\]
\end{lemma}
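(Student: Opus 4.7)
My plan is to construct a natural morphism of presheaves $\iota \colon \cX^{\circ,\pre} \to \cX_K^\circ$, show that it is injective on sections, and prove that every section of $\cX_K^\circ$ comes analytically locally from $\cX^{\circ,\pre}$. Since $\cX_K^\circ = (\X_K)_\eta^\diamond$ is already a v-sheaf (hence an analytic sheaf), these two properties will identify it with the analytic sheafification of $\cX^{\circ,\pre}$. To construct $\iota$, I work locally on the target $\X_K$: for a formal affine open $\Spf A \subset \X_K$ with $A$ topologically of finite type over $\O_E$, a map $\Spf R^{\sharp+} \to \Spf A$ is just a continuous $\O_E$-algebra map $A \to R^{\sharp+}$. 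Inverting $p$ gives $A[1/p] \to R^\sharp$, and since $R^{\sharp+}$ is integrally closed in $R^\sharp$ and contains the image of $A$, the integral closure $A^+$ of $A$ in $A[1/p]$ lands in $R^{\sharp+}$. This produces an adic morphism $S^\sharp \to \Spa(A[1/p], A^+) = (\Spf A)_\eta \subset (\X_K)_\eta$. Gluing along a formal affine cover of $\X_K$ yields $\iota$. Injectivity is immediate since $R^{\sharp+}$ is $p$-torsion free (as $R^\sharp$ is a perfectoid Tate ring of mixed characteristic), so any continuous ring map $A \to R^{\sharp+}$ is determined by its image in $R^\sharp = R^{\sharp+}[1/p]$; separatedness of $\X_K$, inherited from the quasi-projectivity of $\M_K$, globalizes this uniqueness.

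For the local surjectivity, I begin with an adic map $\phi \colon S^\sharp \to (\X_K)_\eta$. By construction of the adic generic fiber, $(\X_K)_\eta = \bigcup_i (\Spf A_i)_\eta$ for any formal affine cover $\{\Spf A_i\}$ of $\X_K$. Pulling this cover back along $\phi$ and using quasi-compactness of $S^\sharp$, I refine to a finite cover by rational opens $\Spa(R_i^\sharp, R_i^{\sharp+}) \subset S^\sharp$ on each of which $\phi$ factors through some $(\Spf A_i)_\eta$. Each such restriction is of the affine form treated above, so lifts (uniquely) to a formal map $\Spf R_i^{\sharp+} \to \Spf A_i \hookrightarrow \X_K$, exhibiting $\phi$ as coming analytically locally from $\cX^{\circ,\pre}$ and finishing the proof of the first claim.

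For the second statement, I use that $\X = \invlim_{K^p} \X_{K_{hs}K^p}$ in the category of formal schemes, so maps $\Spf R^{\sharp+} \to \X$ correspond bijectively to compatible systems of maps into each $\X_{K_{hs}K^p}$; thus the infinite-level presheaf is the cofiltered inverse limit of the finite-level ones. Since the transition maps $\X_{K_{hs}K^p} \to \X_{K_{hs}K'^p}$ are finite étale and injectivity forces the lifts produced in the first claim to be canonical, a possibly further analytic refinement assembles them into a compatible system, i.e.\ a single map $\Spf R^{\sharp+} \to \X$; combined with the identification $\cX_{K_{hs}}^\circ = \invlim_{K^p} \cX_{K_{hs}K^p}^\circ$ this yields the second claim. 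The main obstacle is the local surjectivity step in the first claim, where sheafification is genuinely necessary: compatible maps $\Spf R_i^{\sharp+} \to \X_K$ over an analytic cover of $S^\sharp$ do not glue to a single map $\Spf R^{\sharp+} \to \X_K$, because the $\Spf R_i^{\sharp+}$ do not form a Zariski cover of $\Spf R^{\sharp+}$ in the formal category. Bridging between the analytic topology on $S^\sharp$ and the formal structure of $\X_K$ must therefore be handled carefully via the adic generic fiber.
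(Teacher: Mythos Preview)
Your argument is correct and, for the first assertion, is essentially the content of \cite[2.1.4]{DHKZ24}, which is exactly what the paper cites without further elaboration. So there is no real difference in approach for that part.

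For the second assertion the paper proceeds more abstractly: it invokes that the diamond functor on formal schemes commutes with cofiltered inverse limits, so that the first statement applied directly to $\X=\invlim_{K^p}\X_{K_{hs}K^p}$ already yields the claim. Your route is more hands-on, reducing to the finite-level case and then using the finite \'etale transition maps to propagate lifts upward through the tower. Both are valid, but one point in your write-up could be sharpened: the phrase ``a possibly further analytic refinement assembles them into a compatible system'' undersells what actually happens and leaves open the worry that infinitely many refinements might be needed. In fact no further refinement is required once a single lift $\Spf R_j^{\sharp+}\to\X_{K_{hs}K^p_0}$ exists at some level. For any $K^p\subset K^p_0$, the base change $\X_{K_{hs}K^p}\times_{\X_{K_{hs}K^p_0}}\Spf R_j^{\sharp+}$ is $\Spf B$ with $B$ finite (hence integral) over $R_j^{\sharp+}$, so $B\subset B^+$; the given generic-fiber section $B[1/p]\to R_j^\sharp$ sends $B^+$ into $R_j^{\sharp+}$ and therefore restricts to a continuous map $B\to R_j^{\sharp+}$. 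Thus the lift to level $K^p$ exists over the same rational open, and one analytic cover works uniformly for the whole tower. Making this explicit would close the only soft spot in your argument.
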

\begin{proof}
    The first part is \cite[2.1.4]{DHKZ24}. The second part follows also follows from loc.cit., by noting that the diamond functor $(-)^\Diamond$ on formal schemes commutes with inverse limits.
\end{proof}

For a suitably small away-from-$p$ level $K^p$, we have the identification 
\[\cX = \cX_{K^p} \times_{\cX_{K_{hs}K^p}} \cX_{K_{hs}} = \underline{\Isom}_G(T_p(\cA_K),\underline{\Lambda}) \times_{\cX_{K_{hs}K^p}} \cX_{K_{hs}},\]
which after passing to good reduction loci gives us
\[\cX^\circ = \cX_{K^p}^\circ \times_{\cX^\circ_{K_{hs}K^p}} \cX_{K_{hs}}^\circ.\]

Now we can use that analytic sheafification commutes with fiber products to obtain the following corollary.
\begin{corollary}\label{grlocus_moduli_cor}
    $\cX^\circ$ is the analytic sheafification of the presheaf on $\Perf$
    \[S = \Spa(R,R^+) \mapsto \{(S^\sharp, y,\beta)\}\]
    where $S^\sharp = \Spa(R^\sharp, R^{\sharp +})$ is an untilt of $S$ over $\Spa E$, $y: \Spf R^{\sharp+} \to \X$ is a map of formal schemes which corresponds to a formal abelian scheme $\fA/R^{\sharp+}$ with $G$-structure and infinite away-from-$p$ level structure, and 
    \[\beta: T_p(\cA_S) \overset{\sim}{\to} \underline{\Lambda}_S\]
    is a trivialization of $T_p(\cA_S)$, where $\pi_S: \cA_S^\diamond \to S$ is the adic generic fiber of $\fA^\diamond/\Spa(R^+,R^+)$ and $T_p(\cA_S)$ denotes the Tate module
    \[\underline{\Hom}_{\underline{\Z}_p}(R^1\pi_{S*} \underline{\Z_p}, \underline{\Z_p}).\]
\end{corollary}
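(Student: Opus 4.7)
The plan is to stitch together the identification
\[\cX^\circ = \cX_{K^p}^\circ \times_{\cX^\circ_{K_{hs}K^p}} \cX_{K_{hs}}^\circ\]
recorded just before the statement with the moduli description provided by Lemma \ref{grlocus_moduli}, exploiting that analytic sheafification commutes with fiber products. Concretely, for a fixed sufficiently small tame level $K^p$, the second factor $\cX_{K_{hs}}^\circ$ is, by Lemma \ref{grlocus_moduli}, the analytic sheafification of the presheaf
\[S = \Spa(R,R^+) \mapsto \{(S^\sharp,\, \Spf R^{\sharp+} \to \X)\},\]
while the first factor fits into the pullback square that defines $\cX_{K^p}^\circ$ over $\cX_{K_{hs}K^p}^\circ$, namely $\cX_{K^p}^\circ$ is the pullback of $\underline{\Isom}_G(T_p(\cA_K),\underline{\Lambda}) \to \cX_{K_{hs}K^p}$ along the inclusion of the good reduction locus.

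Next, I would unwind what a section over $S=\Spa(R,R^+)$ of the presheaf $\cX^{\circ,\pre}$ should be after fibering over $\cX_{K_{hs}K^p}^\circ$. A section consists of an untilt $S^\sharp = \Spa(R^\sharp,R^{\sharp+})$ over $\Spa E$ together with a map $y:\Spf R^{\sharp+} \to \X$ of formal schemes (giving the formal abelian scheme $\fA$ with $G$-structure and full away-from-$p$ level structure), plus a trivialization $\beta$ of the Tate module of the adic generic fiber $\cA_S$ of $\fA^\diamond$. The compatibility to check is that the classifying map $S^\sharp \to \cX_{K_{hs}K^p}^\circ$ obtained from $y$ via the $p$-adic completion/diamond functor agrees with the map induced by $(\cA_S,\beta)$ through the universal property of $\cX_{K^p}$; this is essentially tautological given the construction of $\cX_{K^p}$ in terms of the universal Tate module.

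Finally, since analytic sheafification commutes with fiber products, sheafifying the presheaf of triples $(S^\sharp,y,\beta)$ (where $y$ and $\beta$ are unconstrained apart from the shared $S^\sharp$) yields precisely the fiber product of the sheafifications of the two factor presheaves, which by the two previous steps is $\cX_{K^p}^\circ \times_{\cX^\circ_{K_{hs}K^p}} \cX_{K_{hs}}^\circ = \cX^\circ$. Independence of the choice of auxiliary $K^p$ follows because $\cX_{K_{hs}}^\circ = \invlim_{K^p} \cX^\circ_{K_{hs}K^p}$ and the trivialization data is cofinal over all such $K^p$.

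The main obstacle is the bookkeeping step in the second paragraph: one must check that the Tate module $T_p(\cA_S)$ formed on the adic generic fiber of $\fA^\diamond$ coincides with the pullback, along $S^\sharp \to \cX_{K_{hs}K^p}^\circ$, of the Tate module of the universal abelian variety used in the definition of $\cX_{K^p}$. This is a compatibility between the formal/diamond-functor description and the analytic-generic-fiber description of the Tate module, and is where one really uses that $\fA$ comes from $p$-adic completion of an algebraic abelian scheme so that the two constructions of $T_p$ agree under the natural map $\fA^\diamond_\eta \simeq \cA_S$.
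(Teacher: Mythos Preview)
Your proposal is correct and follows essentially the same approach as the paper: both use the fiber product decomposition $\cX^\circ = \cX_{K^p}^\circ \times_{\cX^\circ_{K_{hs}K^p}} \cX_{K_{hs}}^\circ$, invoke Lemma \ref{grlocus_moduli} for the moduli description of $\cX_{K_{hs}}^\circ$, and then appeal to the fact that analytic sheafification commutes with fiber products. Your additional bookkeeping about the Tate module compatibility and independence of $K^p$ is more detailed than what the paper records, but the underlying argument is the same.
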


For passing from the hybrid space to the abelian-type Shimura variety later, we will also need the following lemma.

\begin{lemma}\label{delta_torsor}
    The natural map $\cX^\circ \to \cX_G^\circ$ is a pro-étale torsor with respect to the profinite group $\Delta := \invlim_{K} \Delta(K)$.
\end{lemma}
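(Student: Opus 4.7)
The plan is to verify the torsor property at each finite uniformly good level and then pass to the cofiltered inverse limit. Using the cofinality of uniformly good level subgroups in all level subgroups established after Lemma \ref{magic_lemma}, I may rewrite
\[\cX^\circ = \invlim_{K} \cX_K^\circ, \qquad \cX_G^\circ = \invlim_{K} \cX_{G,K}^\circ, \qquad \Delta = \invlim_{K} \Delta(K),\]
where $K$ ranges over uniformly good level subgroups.

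At each uniformly good level $K$, the polarization action of the finite group $\Delta(K)$ on the quasi-projective scheme $\M_K$ is free: this is precisely the condition that the stabilizer quotient on each connected component computed in Section \ref{section_sv} is trivial, which is guaranteed by the uniformly good assumption in both type (C) and type (A even). The quotient $\M_K \to \M_{G,K} = \M_K/\Delta(K)$ is therefore a finite étale $\Delta(K)$-Galois cover of schemes. Applying the big diamond functor $(-)^\Diamond$, which preserves finite étale covers and commutes with free finite group quotients, yields a finite étale $\underline{\Delta(K)}$-torsor $\cX_K \to \cX_{G,K}$ of v-sheaves. Since the good reduction loci on both sides are defined via pullback from the hyperspecial-at-$p$ level, and finite étale torsors are stable under pullback, the restricted map $\cX_K^\circ \to \cX_{G,K}^\circ$ remains a finite étale $\underline{\Delta(K)}$-torsor, compatibly in $K$ via the surjections $\Delta(K') \twoheadrightarrow \Delta(K)$ for $K' \subset K$.

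Passing to the cofiltered inverse limit in the category of v-sheaves, these compatible torsor actions assemble into an action of $\invlim_K \underline{\Delta(K)} = \underline{\Delta}$ on $\cX^\circ$ over $\cX_G^\circ$. A cofiltered inverse limit of finite étale torsors whose structure groups form an inverse system with profinite limit is by construction a pro-étale torsor for that profinite limit, so $\cX^\circ \to \cX_G^\circ$ is a pro-étale $\underline{\Delta}$-torsor.

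The main obstacle I anticipate is twofold: first, verifying that the big diamond functor commutes with the free finite-group quotient cleanly enough to transfer the torsor identification (which should reduce to its compatibility with finite étale maps and with v-descent); second, ensuring this compatibility survives the pullback defining the good reduction locus at levels that are non-hyperspecial at $p$. Both steps should be routine once we know the scheme-theoretic polarization action is free, which is exactly what Lemma \ref{magic_lemma} supplies via its Chevalley-type input.
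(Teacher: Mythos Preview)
Your overall strategy matches the paper's: reduce to finite uniformly good levels, use freeness of the polarization action to get a finite étale $\Delta(K)$-torsor of schemes, pass to diamonds, restrict to good reduction loci, and take the inverse limit. However, the step ``the restricted map $\cX_K^\circ \to \cX_{G,K}^\circ$ remains a finite étale $\underline{\Delta(K)}$-torsor'' is where the real content lies, and your justification is not correct as written.

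The problem is that $\cX_K^\circ$ is defined as $\cX_K \times_{\cX_{K_{hs}K^p}} \cX_{K_{hs}K^p}^\circ$, while $\cX_{G,K}^\circ$ is defined as $\cX_{G,K} \times_{\cX_{G,K_{hs}K^p}} \cX_{G,K_{hs}K^p}^\circ$. These are pullbacks along \emph{different} maps, so the torsor $\cX_K \to \cX_{G,K}$ is not simply being base-changed along a single morphism; the slogan ``finite étale torsors are stable under pullback'' does not apply. What you actually need is that the square relating $\cX_K^\circ, \cX_{G,K}^\circ, \cX_K, \cX_{G,K}$ is Cartesian, and this requires an argument.

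The paper fills this gap in two steps. First, at hyperspecial level one uses the \emph{integral} models over $\O_E$: the scheme map $\M_{K_{hs}K^p} \to \M_{G,K_{hs}K^p}$ is a $\Delta(K_{hs}K^p)$-torsor (here \emph{uniform} goodness is essential, since one needs goodness at $K_{hs}K^p$ and not just at $K$), and applying $p$-adic completion, adic generic fiber, and the diamond functor---each preserving finite étale torsors---shows directly that $\cX_{K_{hs}K^p}^\circ \to \cX_{G,K_{hs}K^p}^\circ$ is a $\Delta(K_{hs}K^p)$-torsor. Second, a cube argument: the left and right faces are Cartesian by definition of the good reduction loci; the bottom face is Cartesian because both its horizontal maps are torsors for the \emph{same} group $\Delta(K_{hs}K^p)$; hence the top face is Cartesian, which says exactly that $\cX_K^\circ \to \cX_{G,K}^\circ$ arises by base change from the $\Delta(K)$-torsor $\cX_K \to \cX_{G,K}$.

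You flagged this as an obstacle in your final paragraph, but it is not routine: it requires going through the integral model at hyperspecial level, which your argument (using only the big diamond functor on the $E$-schemes $\M_K$) never invokes.
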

\begin{proof}
    Since $(\cX^\circ \to \cX_G^\circ) = \invlim_{K} (\cX_K^\circ \to \cX_{G,K}^\circ)$, where the inverse limit runs over all uniformly good level structures $K \subset G(\A_f)$, we are reduced to checking that for such level subgroups, $\cX_K^\circ \to \cX_{G,K}^\circ$ is a finite étale torsor with respect to the finite group $\Delta(K)$.

    Firstly, $\M_{K_{hs}K^p} \to \M_{G,K_{hs}K^p}$ is a finite étale $\Delta(K_{hs}K^p)$-torsor because $K$ is \textit{uniformly} good. In fact, this is the key point where the uniformity condition is really necessary. Since $p$-adic completion, taking adic generic and taking diamonds commute with finite étale torsors (for the latter statement, see e.g. \cite[15.6]{Sch22}), we find that
    \[\cX_{K_{hs}K^p}^\circ \to \cX_{G,K_{hs}K^p}^\circ\]
    is a finite étale torsor with respect to $\Delta(K_{hs}K^p)$.

    Similarly, $\M_K \to \M_{G,K}$ is a finite étale $\Delta(K)$-torsor, and upon taking adic generic and taking diamonds, $\cX_K \to \cX_{G,K}$ is a finite étale $\Delta(K)$-torsor. Finally, consider the following commutative cube.

\[\begin{tikzcd}
	& {\cX_K} && {\cX_{G,K}} \\
	& {\cX_{K_{hs}K^p}} && {\cX_{G,K_{hs}K^p}} \\
	{\cX_K^\circ} && {\cX_{G,K}^\circ} \\
	{\cX_{K_{hs}K^p}^\circ} && {\cX_{G,K_{hs}K^p}^\circ}
	\arrow["{\Delta(K)}", from=1-2, to=1-4]
	\arrow[from=1-2, to=2-2]
	\arrow[from=1-4, to=2-4]
	\arrow["{\Delta(K_{hs}K^p)}", from=2-2, to=2-4]
	\arrow[from=3-1, to=1-2]
	\arrow[from=3-1, to=3-3]
	\arrow[from=3-1, to=4-1]
	\arrow[from=3-3, to=1-4]
	\arrow[from=3-3, to=4-3]
	\arrow[from=4-1, to=2-2]
	\arrow["{\Delta(K_{hs}K^p)}"', from=4-1, to=4-3]
	\arrow[from=4-3, to=2-4]
\end{tikzcd}\]
The left and right vertical faces of the cube are Cartesian by definition. The bottom of the cube is Cartesian because it consists of two opposite maps that are finite étale torsors with respect to the same group. This formally implies that the top of the cube is also Cartesian, and hence that $\cX_K^\circ \to \cX_{G,K}^\circ$ is a finite étale torsor with respect to $\Delta(K)$ as desired.
\end{proof}

\section{$p$-divisible groups}
In this section we review the theory of $p$-divisible groups, their classification over various base rings via Dieudonné theory and their connection to deformations of abelian varieties via Serre-Tate theory. This will be of vital importance for us; just as in \cite{Zha23}, the fiber product diagram for the hybrid space will end up being a geometric reformulation of Dieudonné and Serre-Tate theory.

\subsection{Definition and first properties}
Let $S$ be any scheme.

\begin{definition}
    A \textbf{$p$-divisible group} over $S$ is an fpqc-sheaf of abelian groups that can be written as an inductive system
    \[\cG = \dirlim G_n\] 
    indexed over $n \in \N$, where all $G_n$ are finite locally free group schemes over $S$, such that there is a natural number $h$ such that
    \begin{enumerate}[label=(\roman*)]
        \item for each $n$, $G_n$ has degree $p^{hn}$ over $S$.
        \item for each $n$ we have an exact sequence
        \[0 \to G_n \overset{i_n}{\longrightarrow} G_{n+1} \overset{p^n}{\longrightarrow} G_n \to 0\]
    \end{enumerate}
    The natural number $h$ is called the \textbf{height} of the $p$-divisible group $\cG$.
\end{definition}
Note that there are canonical isomorphisms $\cG[p^n] \simeq G_n$. From the definition we see that a $p$-divisible group $\cG$ is $p^\infty$-torsion and that multiplication by $p$ is surjective. 

Given a $p$-divisible group $\cG$, its \textbf{dual $p$-divisible group} $\cG^\vee$ is the fpqc-sheaf 
\[T \mapsto \dirlim \cG[p^n]^\vee (T),\]
where $\cG[p^n]^\vee$ is the Cartier dual of $\cG[p^n]$, with transition maps given by the duals of multiplication by $p$. Clearly $\cG^\vee$ is again a $p$-divisible group. 

\begin{definition}
    An \textbf{isogeny} $\lambda: \cG \to \cG'$ between $p$-divisible groups over $S$ is a surjection of fpqc-sheaves whose kernel is representable by a finite locally free group scheme. We will write $\underline{\Hom}(\cG, \cG')$ for the fpqc-sheaf of isogenies between $\cG$ and $\cG'$.

    A \textbf{quasi-isogeny} is a global section $\rho$ of the sheaf $\underline{\Hom}(\cG,\cG') \otimes \Q$, such that Zariski-locally on $S$, $p^n \rho$ is an isogeny for some integer $n$.
\end{definition}

\begin{definition}
    Let $\cG$ be a $p$-divisible group. A \textbf{polarization} of $\cG$ is a quasi-isogeny $\lambda: \cG \to \cG^\vee$ such that the Cartier dual $\lambda^\vee: \cG \iso \cG^{\vee\vee} \to \cG^\vee$ is equal to $-\lambda$. It is called a \textbf{principal polarization} if $\lambda$ is an isomorphism. 
\end{definition}

\begin{example}
    \begin{enumerate}[label=(\roman*)]
        \item The sheaf $\underline{\Q_p/\Z_p}_S$ is a $p$-divisible group of height 1. Its dual is the $p$-divisible group
        \[\mu_{p^\infty} := \dirlim \mu_{p^n},\]
        which is again of height one.
        \item If $A$ is an abelian scheme over $S$ of relative dimension $d$, then
        \[A[p^\infty] := \dirlim_n A[p^n]\]
        is a $p$-divisible group of height $2d$. Its dual is the $p$-divisible group $A^\vee[p^\infty]$.
    \end{enumerate}
\end{example}
The pairing between $A[p^\infty]$ and $A^\vee[p^\infty]$ (or rather their $p^n$-torsions) is precisely the Weil pairing. When $A$ is principally polarized, then $A[p^\infty]$ is self-dual via this principal polarization. In general a polarization $\lambda$ on $A$ induces a polarization of its $p$-divisible group $A[p^\infty]$. 

\begin{definition}
    The (integral) \textbf{Tate module} of a $p$-divisible group $\cG/S$ is the fpqc-sheaf $T_p\cG := \invlim_n \cG[p^n]$, where the transition maps are given by multiplication by $p$. It is a sheaf of $\Z_p$-modules and can be identified with the Hom-sheaf in the category of sheaves of abelian groups on $S_{\text{fpqc}}$
    \[T_p\cG \simeq \cH om(\Q_p/\Z_p,\cG).\]
\end{definition}

Since $T_p\cG$ is an inverse limit of schemes affine over $S$, it is representable by a scheme that is affine over $S$ and additionally flat over $S$, since all $\cG[p^n]$ are flat over $S$.

We will mostly be interested in $p$-divisible groups $\cG$ that are defined over a $p$-adically complete $\Z_p$-algebra $R$. In this case, we may intepret $\cG$ as an fpqc-sheaf on $\Nilp_R^{\opp}$, which is the opposite category of $R$-algebras on which $p$ is nilpotent, as follows:
\[\Nilp_R^{\opp} \ni A \mapsto \invlim_i\text{ } \dirlim_n \cG[p^n](A/p^i).\]

\begin{definition}
    The \textbf{formal completion} $\hat{\cG}$ of a $p$-divisible group $\cG$ over a $p$-adically complete $\Z_p$-algebra $R$ is the fpqc-sheaf on $\Nilp_R^{\opp}$ 
    \[A \mapsto \dirlim_k \{x \in \cG(A) : x = e_\cG \text{ in } \cG(A/I) \text{, for an ideal } I \subset A \text{ such that } I^{k+1}=0 \}\]
\end{definition}

\begin{prop}[\cite{SW13}, 3.1.2]
    $\hat{\cG}$ is a formal Lie (group) variety, and it is hence represented by an affine formal scheme over $R$, which Zariski locally on $R$ is isomorphic to
    \[\Spf R[[X_1,...X_d]]\]
    for some integer $d \geq 0$, called the dimension of $\cG$ relative to $R$.
\end{prop}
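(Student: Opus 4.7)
My plan is to reduce the question to the connected case and then construct a power-series presentation, with the core difficulty being to show the resulting map is an isomorphism.

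\textbf{Step 1: Reduction to the connected case.} I would first observe that $\hat{\cG}$ only sees elements trivialized modulo a nilpotent ideal, which necessarily lie in the formal neighborhood of the identity. After Zariski-localizing $R$, the connected-\'etale sequence $0 \to \cG^\circ \to \cG \to \cG/\cG^\circ \to 0$ exhibits $\cG$ as an extension of its \'etale quotient (whose formal completion is trivial) by its connected component $\cG^\circ$. Hence $\hat{\cG} = \widehat{\cG^\circ}$, and I may assume that each $G_n = \cG[p^n] = \Spec A_n$ is connected; equivalently, the augmentation ideal $I_n \subset A_n$ is locally nilpotent, and the closed immersions $G_n \hookrightarrow G_{n+1}$ correspond to surjections $A_{n+1} \twoheadrightarrow A_n$ sending $I_{n+1}$ onto $I_n$.

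\textbf{Step 2: Representability by a formal spectrum.} Set $A := \invlim_n A_n$, with augmentation ideal $I := \invlim_n I_n$. The linear topology on $A$ whose open ideals are the kernels $\ker(A \twoheadrightarrow A_n)$ agrees with the $I$-adic topology, using uniform nilpotence of the $I_n$ on quasicompact pieces of $\Spec R$. Thus $A$ is $I$-adically complete and separated, and continuous $R$-algebra maps $A \to B$ for $B \in \Nilp_R$ (sending $I$ into a nilpotent ideal of $B$) correspond bijectively to compatible systems of $R$-algebra maps $A_n \to B$ with nilpotent augmentation, i.e., to elements of $\hat{\cG}(B)$. Hence $\hat{\cG}$ is represented by $\Spf A$.

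\textbf{Step 3: Power-series structure (the main obstacle).} It remains to show $A \iso R[[X_1, \ldots, X_d]]$ Zariski-locally, with $d := \rk_R(I/I^2)$. The conormal module $I/I^2$ is canonically $\Lie(\cG)^\vee$; one shows it is finite projective by computing it from the finite locally free algebra $A_1$. Zariski-locally I pick a free basis and lift to $X_1, \ldots, X_d \in I$, producing a continuous $R$-algebra map $\varphi: R[[X_1, \ldots, X_d]] \to A$. The main obstacle, which is the technical heart of the theorem, is proving $\varphi$ is an isomorphism. The key leverage is the action of $[p]$: the map $[p]^*: A \to A$ is finite flat of degree $p^h$ and acts by multiplication by $p$ on $I/I^2$, so iterated pullbacks by $[p]$ sharply contract the $I$-adic filtration. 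Combined with the fact that each $A_n$ is finite locally free of rank $p^{nh}$, this lets one identify the associated graded $\mathrm{gr}_I^\bullet A$ with the symmetric algebra on $I/I^2$ over $R$; the complete Nakayama lemma then gives surjectivity of $\varphi$, while the Hilbert-function rank count against the power series ring gives injectivity. This rigidity argument is essentially that of Tate and Messing for formal Lie varieties associated to $p$-divisible groups.
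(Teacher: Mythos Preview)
The paper does not prove this proposition; it is stated with the citation \cite[3.1.2]{SW13} and used as a black box, so there is no argument in the paper to compare your proposal against.

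On its own merits, your outline follows the classical route (essentially Tate and Messing) and is broadly correct, but two points deserve tightening. In Step~1 the connected--\'etale sequence for $p$-divisible groups is not available after mere Zariski localization; it requires a $p$-henselian base. Since $R$ is already $p$-adically complete this is ultimately fine, but your phrasing is misleading. More substantively, the identity $\hat{\cG} = \widehat{\cG^\circ}$ is almost tautological from the definition of $\hat{\cG}$ as the infinitesimal neighbourhood of the identity, so Step~1 is less of a reduction than you present it. In Step~3 you correctly locate the heart of the matter, but the sentence ``this lets one identify the associated graded $\mathrm{gr}_I^\bullet A$ with the symmetric algebra on $I/I^2$'' is precisely the theorem, and your justification via a Hilbert-function rank count is not quite how the standard arguments run: one typically shows formal smoothness by exploiting that $[p]^*$ is finite locally free and topologically nilpotent on the augmentation ideal, which forces obstructions to vanish, rather than by counting ranks directly. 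The approach taken in \cite{SW13} itself reduces to the case of a local Artinian base with algebraically closed residue field and then invokes the classical result, which is a somewhat different and less constructive path than the one you sketch.
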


\begin{definition}
    The \textbf{Lie algebra} of $\cG$ is the fpqc-sheaf of $R$-modules $Lie(\cG) := Lie(\hat{\cG})$, defined to be the dual of the (Zariski) locally free $R$-module of rank $d$
    \[\omega_\cG := e_{\hat{\cG}}^* \Omega^1_{\hat{\cG}/R}.\]
    By $\Lie \cG$ we will denote its global sections, which is a finite locally free $R$-module. 
\end{definition}

Many of the $p$-divisible groups that we will encounter will have extra structure which is coming from abelian varieties with $G$-structure.

\begin{definition}\label{p-div_with_structure}
    Let $S$ be an $\O_E$-scheme, then we call a triple $(\cG,\iota,\lambda)$ a \textbf{$p$-divisible group with $G$-structure} over $S$ if
    \begin{itemize}
        \item $\cG$ is a $p$-divisible group over $S$,
        \item $\iota: \O_B \otimes \Z_p \to \End(\cG)$ is a $\Z_p$-linear map that satifies the Kottwitz determinant condition
        \[\det_{\O_S}(\iota(a) | \Lie(\cG)) = \det(a|V_{1,\Q_p})\]
        for any $a \in \O_B$, and
        \item $\lambda: \cG \to \cG^\vee$ is an $\O_p$-linear polarization.
    \end{itemize}
    A \textbf{$\Z_p$-isomorphism} resp. \textbf{$\O_p$-isomorphism} is an $\O_B$-linear isomorphism $f: \cG \to \cG'$ preserving the polarizations up to a section in $\underline{\Z_p^\times}(S)$ resp. $\underline{\O_p^\times}(S)$. A \textbf{$\Q_p$-quasi-isogeny} resp. \textbf{$F_p$-quasi-isogeny} is an $\O_B$-linear quasi-isogeny $f: \cG \to \cG'$ preserving the polarizations up to a section in $\underline{\Q_p^\times}(S)$ resp. $\underline{F_p^\times}(S)$.
\end{definition}

Note that if $\cG$ is a $p$-divisible group over $S$ with $G$-structure, then $T_p\cG$ is naturally an $\O_B$-module.

\subsection{Classification over $\O_C$}
Let $C$ be a complete non-archimedean algebraically closed field with ring of integers $\O_C$. Here we recall Scholze-Weinstein's classification of $p$-divisible groups over $\O_C$ in terms of linear algebraic data, similar in spirit to Riemann's classification of complex abelian varieties in terms of lattices and Riemann forms.

\begin{theorem}[\cite{SW20}, 12.1.1]
    Let $\cG$ be a $p$-divisible group over $\O_C$. There is a natural short exact sequence
    \[0 \to \Lie \cG \otimes_{\O_C} C(1) \overset{\alpha_{G^*}^*(1)} {\longrightarrow} T_p\cG(\O_C) \otimes_{\Z_p}C \overset{\alpha_G}{\longrightarrow} (\Lie \cG^\vee)^* \otimes_{\O_C} C \to 0,\]
    called the \textbf{Hodge-Tate filtration}.
\end{theorem}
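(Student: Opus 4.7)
The plan is to build the two maps explicitly via Cartier duality and the Lie functor, observe that the composition vanishes, and then establish exactness by a dimension count together with surjectivity of $\alpha_\cG$, which we handle by dévissage through the connected-étale sequence and an analysis of the universal cover.

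First, I would construct $\alpha_\cG$ as follows. Using $T_p\cG(\O_C) = \Hom(\Q_p/\Z_p, \cG)$, each section $t$ Cartier-dualizes to $t^\vee\colon \cG^\vee \to \mu_{p^\infty}$; passing to Lie algebras gives $\Lie\cG^\vee \to \Lie\mu_{p^\infty} \iso \O_C$, i.e.\ an element of $(\Lie\cG^\vee)^*$. Extending $C$-linearly produces $\alpha_\cG$. Applying the same procedure to $\cG^\vee$ gives $\alpha_{\cG^\vee}\colon T_p\cG^\vee(\O_C)\otimes C \to (\Lie\cG)^*\otimes C$; dualizing and using the Weil pairing $T_p\cG\otimes T_p\cG^\vee \to \Z_p(1)$ to identify $T_p\cG^\vee(\O_C)^* \iso T_p\cG(\O_C)(-1)$ yields the twisted map $\alpha_{\cG^\vee}^*(1)$.

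Second, the composition vanishes: unwinding it through the Weil pairing expresses it as an alternating pairing of a Lie-algebra element with itself, which is zero (this can be checked on strictly totally disconnected test rings by reducing to the tautological case $\cG = \mu_{p^\infty}\oplus\Q_p/\Z_p$). For exactness, note that $T_p\cG(\O_C)$ is free of rank $h$ over $\Z_p$ (since $\O_C$ is strictly henselian), $\Lie\cG$ has rank $d := \dim\cG$, and $(\Lie\cG^\vee)^*$ has rank $d^\vee := \dim\cG^\vee$; using $h = d + d^\vee$, the dimensions in the middle match those on the ends, so exactness at all three spots follows from surjectivity of $\alpha_\cG$.

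Third, the dévissage. The connected-étale sequence $0 \to \cG^\circ \to \cG \to \cG^{\text{et}} \to 0$ splits over $\O_C$ because its residue field is algebraically closed, and both $\alpha_{(-)}$ and the Tate module are additive in $\cG$. Hence surjectivity of $\alpha_\cG$ reduces to two cases. When $\cG$ is étale, $\cG \iso (\Q_p/\Z_p)^h$ and $\cG^\vee \iso \mu_{p^\infty}^h$ over $\O_C$, and $\alpha_\cG$ is the tautological identification $\Z_p^h\otimes C \iso \O_C^h{}^*\otimes C$. When $\cG$ is connected, I would use the universal cover $\widetilde{\cG} := \invlim_{[p]}\cG$, a $\Q_p$-vector space scheme sitting in $0\to T_p\cG\to\widetilde{\cG}\to\cG\to 0$; since $\cG$ is a formal Lie group, the logarithm converges to an isomorphism $\log\colon \widetilde{\cG}(\O_C) \iso \Lie\cG\otimes_{\O_C} C$, and reading off $\O_C$-points of the above exact sequence supplies the required injectivity of $\alpha_{\cG^\vee}^*(1)$, whence by dimensions surjectivity of $\alpha_\cG$.

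The main obstacle is the connected case: one must establish that the logarithm is a well-defined isomorphism from $\widetilde{\cG}(\O_C)$ onto $\Lie\cG\otimes C$, which requires $p$-adic convergence on the formal Lie variety $\widehat{\cG}$ and a careful identification of $\widetilde{\cG}(\O_C)$ with its tangent space. Once this analytic input is in hand, everything else is formal manipulation of Cartier duality and linear algebra.
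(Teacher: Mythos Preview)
The paper does not prove this theorem; it merely cites \cite[12.1.1]{SW20} and, in the paragraph following the statement, recalls the construction of $\alpha_\cG$ (which matches your construction exactly). So there is no proof in the paper to compare against, and your sketch is essentially the standard argument from \cite{SW20} and \cite{Far19}: build the maps via Cartier duality and $\Lie$, reduce exactness to surjectivity of $\alpha_\cG$ by a rank count (using $h=d+d^\vee$ and that surjectivity of $\alpha_{\cG^\vee}$ gives injectivity on the left by duality), and then prove surjectivity by d\'evissage through the connected--\'etale sequence and the logarithm on the universal cover.

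One genuine weak point: your justification that the composition vanishes is not right as written. You cannot ``reduce to the tautological case $\cG=\mu_{p^\infty}\oplus\Q_p/\Z_p$'' on strictly totally disconnected test objects, because an arbitrary $p$-divisible group over $\O_C$ is not of that form and the statement is not a pointwise identity that can be checked fiberwise against a fixed model. The honest argument is the one you hint at in the connected case: the logarithm exhibits $T_p\cG(\O_C)$ as the kernel of $\widetilde{\cG}(\O_C)\overset{\log}{\longrightarrow}\Lie\cG\otimes C$, and unwinding Cartier duality identifies $\alpha_{\cG^\vee}^*(1)$ with the inclusion of this kernel (up to the Tate twist), so that $\alpha_\cG$ visibly kills its image. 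Combined with the \'etale case (where $\Lie\cG=0$ and there is nothing to check) and additivity along the split connected--\'etale sequence, this simultaneously gives both the vanishing of the composition and the exactness. I would reorganize your write-up so that the logarithm computation does this double duty, rather than treating ``composition zero'' as a separate step with an ad hoc reduction.
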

We recall how to construct the map $\alpha_G$. Given a section $f$ of $T_p\cG$, we may view it as a homomorphism $f: \Q_p/\Z_p \to \cG$ and apply the Lie algebra functor to its dual $f^\vee: \cG^\vee \to \mu_{p^\infty}$ to obtain the map $\Lie(f^\vee): \Lie(\cG^\vee) \to \Lie(\mu_{p^\infty})$. After picking a coordinate $t$ of $\G_m$, the $\O_C$-linear dual $\Lie(\mu_{p^\infty})^*$ is naturally trivialized and isomorphic to $\O_C \frac{dt}{t}$, and we may define $\alpha_G$ via the assignment
\[f \mapsto (\Lie f^\vee)^*(\frac{dt}{t}).\]

Next, let $\{(T,W)\}$ be the category of pairs consisting of a finite free $\Z_p$-module $T$ and a $C$-sub-vector space $W \subset T \otimes_{\Z_p} C(-1)$, with the obvious notion of morphisms. We define the dual of the pair $(T,W)$ to be the pair $(T^*(1),W^\perp)$, where the orthogonal complement is with respect to the natural pairing between $T \otimes_{\Z_p} C(-1)$ and $T^* \otimes_{\Z_p} C$. Then the main classification result is the following.

\begin{theorem}[\cite{SW13}, Theorem B, 5.2.1]\label{sw_pdiv}
    The category of $p$-divisible groups over $\O_C$ is equivalent to the above category $\{(T,W)\}$ via the functor
    \[\Psi: \cG \mapsto (T_p\cG(\O_C),\Lie\cG \otimes_{\O_C} C),\]
    where $\Lie \cG \otimes_{\O_C} C$ is viewed as a $C$-sub-vector space of $T_p\cG(\O_C) \otimes_{\Z_p} C(-1)$ via the Hodge-Tate filtration. This equivalence is compatible with duality. 
\end{theorem}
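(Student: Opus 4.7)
The plan is to prove Scholze--Weinstein's classification via two steps: full faithfulness of $\Psi$ and essential surjectivity via an explicit quasi-inverse, plus a final check of compatibility with duality.

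For full faithfulness, given $p$-divisible groups $\cG, \cG'$ over $\O_C$, the Tate module functor yields an injection $\Hom_{\O_C}(\cG, \cG') \hookrightarrow \Hom_{\Z_p}(T_p\cG, T_p\cG')$: a morphism is determined by its action on each $\cG[p^n]$, which over the local ring $\O_C$ is determined by what happens on the generic fiber, where Tate's theorem applies. Functoriality of the Hodge--Tate exact sequence shows the image lies in the subgroup of $f$ whose induced map $f \otimes C(-1)$ sends $W_\cG$ into $W_{\cG'}$. For the converse, any such filtration-compatible $f: T_p\cG \to T_p\cG'$ lifts to a morphism of universal covers $\widetilde{f}: \widetilde{\cG} \to \widetilde{\cG}'$, where $\widetilde{\cG} := \invlim_{\times p} \cG$; the crucial Scholze--Weinstein input is that $\widetilde{\cG}$ depends only on the rational Tate module $V_p\cG = T_p\cG[1/p]$, making this lift automatic. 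The compatibility with $W$ then ensures $\widetilde{f}$ carries torsion to torsion and hence descends to a map $\cG \to \cG'$.

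For essential surjectivity, given $(T,W)$, I would build $\cG$ as follows. The universal cover of the sought $p$-divisible group must be $\widetilde{\cG}_T := T \otimes_{\Z_p} \widetilde{\mu}_{p^\infty}$, where $\widetilde{\mu}_{p^\infty} = \invlim_{\times p} \widehat{\mu}_{p^\infty}$ is the universal cover of the formal multiplicative group over $\O_C$. This is a formal $\Q_p$-vector space object over $\O_C$ carrying a quasi-logarithm map to $T \otimes_{\Z_p} C$. The subspace $W \subset T \otimes_{\Z_p} C(-1)$, via its twist inside $T \otimes_{\Z_p} C$, determines a quotient of $\widetilde{\cG}_T$; the $p$-power torsion of that quotient is a candidate $p$-divisible group $\cG$ over $\O_C$ with $T_p\cG \iso T$ and Hodge--Tate subspace $W$. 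One then verifies that $\Psi(\cG) \iso (T, W)$ by direct computation.

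The main obstacle is the \emph{integrality} step: the universal cover construction a priori only produces a rigid analytic $p$-divisible group over $C$, and one must show it comes from a genuine $p$-divisible group over $\O_C$. Scholze--Weinstein handle this by first establishing the classification over the unramified subring $W(\overline{\F}_p) \subset \O_C$ via classical Dieudonné theory (where it follows from Fontaine's theorem on crystalline representations), and then globalizing by an approximation argument, using the full faithfulness from the first step to rigidify the limit and ensure uniqueness of the $\O_C$-integral model. Finally, compatibility with duality is immediate from the observation that the Hodge--Tate sequence for $\cG^\vee$ is the Tate twist of the Pontryagin dual of the sequence for $\cG$, which corresponds precisely to the involution $(T, W) \mapsto (T^*(1), W^\perp)$ on the linear algebra side.
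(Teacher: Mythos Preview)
Your outline is essentially correct and follows the original argument of \cite{SW13}: full faithfulness via universal covers, then essential surjectivity by first producing a rigid-analytic candidate from $T \otimes_{\Z_p} \widetilde{\mu}_{p^\infty}$ and the quasi-logarithm, with the integrality over $\O_C$ handled by reducing to a discretely valued situation (classical Dieudonn\'e/Fontaine) and approximating.

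The paper, however, does not reproduce this argument. It recalls only the construction of the \emph{inverse} functor, and does so under the extra hypotheses that $C$ is spherically complete with surjective norm $|\cdot|: C \to \R_{\geq 0}$. In that setting the paper starts from the honest $p$-divisible group $\cG' := T(-1) \otimes_{\Z_p} \mu_{p^\infty}$ (not its universal cover), passes to the adic generic fibre $\cG'^{\ad}_\eta$, and forms the sheaf-theoretic fibre product of the logarithm $\cG'^{\ad}_\eta \to T(-1) \otimes_{\Z_p} \G_a$ against the inclusion $W \otimes_C \G_a \hookrightarrow T(-1) \otimes_{\Z_p} \G_a$. The resulting adic space $\cG^{\ad}_\eta$ is then shown, via Fargues' results in \cite{Far19} on rigid-analytic $p$-divisible groups (and this is where spherical completeness is used), to arise as the generic fibre of a $p$-divisible group over $\O_C$, obtained concretely as $\coprod_Y \Spf H^0(Y,\O_Y^+)$ over connected components $Y$ of $\cG^{\ad}_\eta$.

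So the two routes differ in where the hard work lies: you push the integrality problem back to the discretely valued case and approximate, whereas the paper stays over $\O_C$ throughout but imports Fargues' rigid-analytic machinery and pays the price of the extra hypotheses on $C$. Your approach covers all algebraically closed complete $C$; the paper's sketch is shorter but conditional. Neither discusses full faithfulness in any detail---the paper omits it entirely.
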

\begin{proof}
    For convenience, we recall the construction of the inverse functor in the case that $C$ is spherically complete (i.e. the intersection of every decreasing sequence of balls is non-empty) and the norm map $C \to \R_{\geq0}$ is surjective.

    In this case, given a tuple $(T,W)$, we define the $p$-divisible group
    \[\cG' := T(-1) \otimes_{\Z_p} \mu_{p^\infty}\]
    and obtain the following diagram
\[\begin{tikzcd}
	& {W \otimes_C \mathbb{G}_a} \\
	{\cG'^{\ad}_\eta} & {T(-1) \otimes_{\Z_p} \mathbb{G}_a,}
	\arrow[hook, from=1-2, to=2-2]
	\arrow[from=2-1, to=2-2]
\end{tikzcd}\]
    where $\G_a$ is the sheafification of the functor $(A,A^+) \mapsto A^+$ on complete affinoid $(C,\O_C)$-algebras. The vertical arrow is induced by the given inclusion $W \hookrightarrow T \otimes_{\Z_p} C(-1)$, while the horizontal arrow is the logarithm on $\cG_\eta'^{\ad}$. Note that the logarithm usually takes the form $\cG'^{\ad}_\eta \to \Lie\cG' \otimes_{\O_C} \G_a$, but since $\Lie \mu_{p^\infty} \iso \O_C$, $\Lie \cG'$ is naturally in $T$ identified with $T(-1) \otimes_{\O_C} C$.

    Denote by $\cG^{\ad}_\eta$ the sheaf-theoretic fiber product of the above diagram. Using results from \cite{Far19}, one can show under our additional assumptions on $C$ that the formal scheme
    \[\cG := \coprod_Y \Spf H^0(Y,\O_Y^+)\]
    is a $p$-divisible group over $\O_C$, where $Y$ runs over all connected components of $\cG_\eta^{\ad}$. This defines the inverse functor $(T,W) \mapsto \cG$.
\end{proof}

\subsection{Descent properties}
Similar to the approach in \cite{Zha23}, we will need certain descent properties for $p$-divisible groups in order to obtain the fiber product diagram for our hybrid spaces. We collect the necessary results from \cite[Section 3.3]{Zha23}.

\begin{lemma}[\cite{Zha23}, Lemma 3.14]\label{milnor}
    Given a so-called Milnor square, i.e. a Cartesian diagram of rings
\[\begin{tikzcd}
	R & {R_2} \\
	{R_1} & {R_3}
	\arrow[from=1-1, to=1-2]
	\arrow[from=1-1, to=2-1]
	\arrow["\ulcorner"{anchor=center, pos=0.125}, draw=none, from=1-1, to=2-2]
	\arrow[from=1-2, to=2-2]
	\arrow[two heads, from=2-1, to=2-2]
\end{tikzcd}\]
with $R_1 \twoheadrightarrow R_3$ surjective, the corresponding diagram of categories of finite locally free modules over these rings is $2$-Cartesian. Concretely, the category of finite locally free $R$-modules is equivalent to the category of gluing data
\[\{M_1/R_1, M_2/R_2, \alpha: M_1 \otimes_{R_1} R_3 \iso M_2 \otimes_{R_2} R_3\},\]
where $M_i$ is a finite locally free module over $R_i$, and $\alpha$ is an isomorphism.
\end{lemma}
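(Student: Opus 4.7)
The plan is to construct explicit quasi-inverse functors between the category of finite locally free $R$-modules and the category of gluing data, and then to verify the equivalence. The forward functor sends $M$ to the triple $(M \otimes_R R_1,\, M \otimes_R R_2,\, \alpha_M)$, where $\alpha_M$ is the canonical identification of the two base changes to $R_3$. In the opposite direction, the backward functor sends a gluing datum $(M_1, M_2, \alpha)$ to the pullback $M := M_1 \times_{M_3} M_2$ formed using $\alpha$, regarded as an $R = R_1 \times_{R_3} R_2$-module via the componentwise $R_1$- and $R_2$-actions.

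The verification splits into two claims. The easier one is that for any finite locally free $R$-module $M$, the canonical map $M \to (M \otimes_R R_1) \times_{M \otimes_R R_3} (M \otimes_R R_2)$ is an isomorphism. I would reduce to the free case by Zariski-localizing on $\Spec R$, which is compatible with the pullback since localization commutes with finite limits of rings. For free modules the statement is tautological from the Cartesian square $R = R_1 \times_{R_3} R_2$.

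The harder claim is that, for any gluing datum $(M_1, M_2, \alpha)$, the pullback $M$ is finite locally free over $R$ and satisfies $M \otimes_R R_i \cong M_i$ compatibly with $\alpha$. The surjectivity of $R_1 \twoheadrightarrow R_3$ implies (via the Cartesian structure) the surjectivity of $R_2 \twoheadrightarrow R_3$, and hence of the reduction maps $M_i \to M_3$. A short diagram chase then gives that the projections $M \to M_i$ are surjective with kernels $\ker(R_i \to R_3) \cdot M_i$, which promptly identifies $M \otimes_R R_i$ with $M_i$. For local freeness, I would work Zariski-locally on $\Spec R_1$ and $\Spec R_2$ where $M_1, M_2$ become free of a common rank $n$ (the ranks agree by the existence of $\alpha$ modulo $R_3$), and refine to a common cover of $\Spec R$ using the surjection to $\Spec R_3$.

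The main obstacle is the final reduction: after passing to a suitable Zariski cover of $\Spec R$, I need to lift the gluing isomorphism $\alpha \in \GL_n(R_3)$ to an element of $\GL_n(R_1)$ in order to replace $(R_1^n, R_2^n, \alpha)$ by $(R_1^n, R_2^n, \id)$, whose pullback is visibly $R^n$. Since $\GL_n$ is smooth, surjective ring maps $R_1 \twoheadrightarrow R_3$ admit sections on $\GL_n$-points after a further Zariski localization on $\Spec R_1$ (one lifts $\alpha$ to a matrix $\tilde\alpha$ in $M_n(R_1)$, then inverts the locus where $\det \tilde\alpha$ is a unit, which covers the preimage of $\Spec R_3$); matching this refinement on the $R_2$-side via the Cartesian structure produces the required cover of $\Spec R$. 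Once $\alpha$ is trivialized in this fashion, the pullback description reduces to the ring-level Cartesian square, concluding the argument.
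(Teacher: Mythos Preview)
Your two functors are exactly the ones the paper writes down: base change in one direction, and the fiber product $M = \ker(M_1 \oplus M_2 \xrightarrow{\alpha - \id} M_2 \otimes_{R_2} R_3)$ in the other. The paper stops there and does not spell out the verification, so your added detail already goes beyond what is recorded.

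One concrete error in that verification: it is \emph{not} true that surjectivity of $R_1 \to R_3$ forces $R_2 \to R_3$ to be surjective (take $R_1 = R_3 = \Q$, $R_2 = \Z$). What the Cartesian square actually gives you is surjectivity of the \emph{parallel} leg $R \to R_2$, hence of $M \to M_2$; from this $M \otimes_R R_2 \cong M_2$ follows as you outline. The other identification $M \otimes_R R_1 \cong M_1$ is genuinely asymmetric---neither $R \to R_1$ nor $M \to M_1$ need be surjective---so your ``short diagram chase'' does not apply on that side. The standard fix is to establish local freeness of $M$ first (your final paragraph, lifting $\alpha$ to $\GL_n(R_1)$ after localizing, does this once both $M_i$ are free; alternatively one stabilizes so that $\operatorname{diag}(\alpha,\alpha^{-1})$ lifts globally), and then read off both base-change identities from the free case.
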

\begin{proof}
    Given a finite locally free $R$-module, we can base change it to $R_1$ and $R_2$ and take $\alpha$ to be the identity. Conversely, given a gluing triple $(M_1,M_2,\alpha)$ we obtain an $R$-module $M$ via
    \[M := \ker(M_1 \oplus M_2 \overset{\alpha - \id}{\longrightarrow} M_2 \otimes_{R_2} R_3).\qedhere\]
\end{proof}

\begin{prop}[\cite{Zha23}, 3.16]\label{BT_milnor}
    Given a Milnor square as in Lemma \ref{milnor}, the corresponding diagram of categories of $p$-divisible groups over these rings is $2$-Cartesian. 
\end{prop}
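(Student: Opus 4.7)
\medskip

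\textbf{Proof plan.} The strategy is to reduce to Lemma \ref{milnor} level by level. Recall that a $p$-divisible group $\cG$ over any base is by definition an inductive system $\{G_n = \cG[p^n]\}_n$ of finite locally free commutative group schemes satisfying the rank and short-exactness axioms; each $G_n$ is the spectrum of a finite locally free Hopf algebra. The plan is to apply Lemma \ref{milnor} to the underlying finite locally free modules at each level, then promote the resulting modules to Hopf algebras by gluing the structure maps, then glue the transition maps between levels, and finally verify that the $p$-divisible group axioms descend through the patching.

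\medskip

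In detail, suppose we are given a gluing datum $(\cG_1,\cG_2,\alpha)$ where $\cG_i = \varinjlim G_{i,n}$ over $R_i$ and $\alpha$ is compatible with the level structure. Write $G_{i,n} = \Spec A_{i,n}$ with $A_{i,n}$ a finite locally free $R_i$-Hopf algebra, and let $\alpha_n$ be the induced isomorphism $A_{1,n}\otimes_{R_1} R_3 \iso A_{2,n}\otimes_{R_2} R_3$. By Lemma \ref{milnor} the underlying modules glue to a finite locally free $R$-module $A_n$ with $A_n \otimes_R R_i \iso A_{i,n}$ compatibly with $\alpha_n$. Since tensor products of finite locally free modules commute with base change, Lemma \ref{milnor} also applies to $A_n \otimes_R A_n$, so the $R_i$-linear structure maps (multiplication, unit, comultiplication, counit, antipode) on each $A_{i,n}$ --- which are compatible under $\alpha_n$ --- descend uniquely to Hopf algebra structure maps on $A_n$. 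Thus $G_n := \Spec A_n$ is a finite locally free commutative group scheme over $R$. By the same reasoning, the transition maps $G_{i,n} \hookrightarrow G_{i,n+1}$ and $G_{i,n+1} \xrightarrow{p^n} G_{i,n}$ are $R_i$-module maps that descend under Lemma \ref{milnor} to maps $G_n \to G_{n+1}$ and $G_{n+1} \to G_n$ over $R$, giving an inductive system $\cG := \varinjlim G_n$.

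\medskip

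To finish, we must verify that $\cG$ is a $p$-divisible group. The constancy of the rank ($p^{hn}$) is clear from the rank formula in Lemma \ref{milnor}. The remaining axiom is the short exactness of
\[0 \to G_n \to G_{n+1} \xrightarrow{p^n} G_n \to 0,\]
which translates at the level of Hopf algebras into the dual assertions that $A_n \hookrightarrow A_{n+1}$ exhibits $A_{n+1}$ as a faithfully flat Hopf algebra extension of $A_n$, and that $A_{n+1} \twoheadrightarrow A_n$ is a surjection with the appropriate kernel. Each of these assertions concerns maps between finite locally free modules (or tensor powers thereof) and is valid after base change to $R_1$ and to $R_2$ by assumption on $\cG_i$. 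Since the patching equivalence of Lemma \ref{milnor} is an equivalence of categories that is moreover compatible with tensor products, it detects and preserves injectivity, surjectivity, and the formation of kernels and cokernels of maps between finite locally free modules; hence the exactness descends to $R$.

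\medskip

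Conversely, given a $p$-divisible group $\cG$ over $R$, one obtains the tautological gluing datum $(\cG \otimes_R R_1, \cG \otimes_R R_2, \id)$, and the construction above recovers $\cG$ by another application of Lemma \ref{milnor}; fully-faithfulness of the functor $\cG \mapsto (\cG_1,\cG_2,\alpha)$ reduces likewise to fully-faithfulness at each finite level, which is Lemma \ref{milnor} again. The main obstacle is the compatibility between the Milnor patching of modules and the additional algebraic and exactness structure carried by Hopf algebras; the key input that makes this manageable is that Lemma \ref{milnor} applies equally to tensor powers, so every piece of Hopf-algebraic data (structure maps, and the relations between them) patches on the nose.
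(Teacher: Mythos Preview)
Your proof is correct and follows essentially the same approach as the paper: reduce to Lemma \ref{milnor} level by level, observing that finite locally free Hopf algebras are finite locally free modules equipped with structure maps that are themselves module maps (hence glue), and then glue the transition maps between levels. The paper organizes things slightly differently---proving full faithfulness first and invoking it to obtain the transition maps for essential surjectivity---and is terser about verifying the $p$-divisible group axioms, whereas you spell this out; but the substance is the same.
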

\begin{proof}
    Given two $p$-divisible groups $\cG,\cG$ over $R$, we have
    \[\Hom_R(\cG,\cG') = \invlim_n \Hom_R(\cG[p^n],\cG'[p^n]).\]
    For each $n$, $\Hom_R(\cG[p^n], \cG'[p^n])$ is given by maps of $R$-modules $\O(\cG'[p^n]) \to \O(\cG[p^n])$ that respect the additional Hopf algebra structure. Since all the Hopf algebra structure is given in terms of morphisms of $R$-modules, Lemma \ref{milnor} shows that giving a map in $\Hom_R(\cG[p^n], \cG'[p^n])$ is equivalent to giving morphisms on the base changes to $R_1$ and $R_2$ that agree over $R_3$. After passing to the inverse limit, this shows fully faithfulness.

    For essential surjectivity, we argue as follows. Given a gluing triple $(\cG_1,\cG_2,\alpha)$ of $p$-divisible groups, we can first restrict to $p^n$-torsion points for each $n$ to obtain the ring of regular functions $\O(\cG[p^n])$ by Lemma \ref{milnor} and endow it with a Hopf algebra structure. Fully faithfulness shows that there are transition maps turning $\invlim_n \cG[p^n]$ into a $p$-divisible group that restricts to $\cG_1$ resp. $\cG_2$.
\end{proof}

\begin{lemma}[\cite{Zha23}, 3.17]\label{prod_lemma}
    Let $R= \prod_{i \in I} V_i$ be a product of valuation rings, and let $n$ be an integer. Then the category of rank $n$ projective $R$-modules is equivalent to the collection of categories of rank $n$ projective $V_i$-modules.
\end{lemma}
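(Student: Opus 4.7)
The plan is to exhibit the equivalence via the base change functor
\[F : P \longmapsto (P \otimes_R V_i)_{i \in I},\]
and to reduce the verification of essential surjectivity and full faithfulness to one key observation: every rank $n$ projective $R$-module is free of rank $n$. Once this is established, both sides become a single isomorphism class of objects with morphism set $M_n(R)$, and the equivalence follows immediately.

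I expect the freeness claim to be the main obstacle, so I would address it first. Given a rank $n$ projective $P$, write $P = eR^N$ for some idempotent $e \in M_N(R)$. Since $M_N(R) = \prod_i M_N(V_i)$, this $e$ corresponds to a tuple of idempotents $e_i \in M_N(V_i)$. Each $V_i$ is a local ring (valuation rings are local), so the finitely generated projective modules $e_iV_i^N$ and $(1-e_i)V_i^N$ are free; the rank $n$ assumption forces them to be of ranks $n$ and $N-n$ respectively. Combining bases gives $g_i \in \GL_N(V_i)$ with $g_i^{-1}e_ig_i = \diag(1,\ldots,1,0,\ldots,0)$, and assembling these into $g = (g_i)_i \in \prod_i \GL_N(V_i) = \GL_N(R)$ conjugates $e$ to the standard idempotent, so $P \cong R^n$.

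With this in hand, essential surjectivity will be immediate. Given a tuple $(M_i)_i$ with each $M_i$ rank $n$ projective over $V_i$, I would set $P := \prod_i M_i$ with the componentwise $R$-action. Since each $M_i \cong V_i^n$ we get $P \cong R^n$, which is free of rank $n$. Letting $\varepsilon_i \in R$ denote the idempotent with $1$ in the $i$-th slot, one has $V_i = R/(1-\varepsilon_i)R$ as an $R$-module, and a direct computation yields
\[P \otimes_R V_i = P/(1-\varepsilon_i)P = M_i,\]
so $F(P) \cong (M_i)_i$.

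Full faithfulness then follows from the chain of natural identifications
\[\Hom_R(R^n, R^n) = M_n(R) = M_n\Big(\prod_i V_i\Big) = \prod_i M_n(V_i) = \prod_i \Hom_{V_i}(V_i^n, V_i^n),\]
whose composite is precisely $F$ on morphism sets between the model objects on either side. Everything of substance sits in the idempotent-lifting step; the rest is bookkeeping.
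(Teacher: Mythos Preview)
Your proposal is correct and follows the same approach as the paper: base change in one direction, componentwise product in the other, both relying on the fact that finite projective modules over valuation rings are free. The paper's proof is a two-line sketch that simply asserts the two functors are inverse, whereas you spell out the key point (freeness of rank $n$ projectives over $R$) via an explicit idempotent-conjugation argument; this is extra detail rather than a different idea.
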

\begin{proof}
    Any rank $n$ projective $R$-module yields a collection of rank $n$ projective $V_i$-modules via base change. Conversely, given a collection $\{M_i\}_{i \in I}$ of rank $n$ projective (also automatically free) $V_i$-modules, we can set $M := \prod_{i \in I} M_i$, which is also free. These two functors are clearly inverse to each other. 
\end{proof}

\begin{corollary}[\cite{Zha23}, 3.18]\label{prod_cor}
    In the setting of Lemma \ref{prod_lemma}, the category of $p$-divisible groups over $R$ of a fixed height is equivalent to the collection of $p$-divisible groups of the same height over each $V_i$.
\end{corollary}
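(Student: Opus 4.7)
The plan is to mimic the proof of Proposition \ref{BT_milnor}, but with the Milnor gluing replaced by the much simpler product decomposition coming from Lemma \ref{prod_lemma}. The base-change functor sending $\cG/R$ to the tuple $\{\cG \times_R V_i\}_{i \in I}$ is clearly well-defined; one needs to check it is fully faithful and essentially surjective.

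For fully faithfulness, given $p$-divisible groups $\cG, \cG'$ over $R$ of height $h$, I would use
\[
\Hom_R(\cG, \cG') = \invlim_n \Hom_R(\cG[p^n], \cG'[p^n]).
\]
Each $\cG[p^n]$ is a finite locally free group scheme whose coordinate ring is a rank $p^{hn}$ projective $R$-module equipped with Hopf algebra structure. Morphisms between such group schemes are $R$-module maps between the coordinate Hopf algebras that respect the comultiplication, counit, and antipode. By Lemma \ref{prod_lemma}, giving such a module map is the same as giving a compatible tuple of $V_i$-module maps, and since all Hopf compatibilities are module-theoretic identities, they descend along the product decomposition. Taking the inverse limit over $n$ then yields fully faithfulness of base change.

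For essential surjectivity, suppose we are given a collection $\{\cG^{(i)}\}_{i \in I}$ of $p$-divisible groups of height $h$, with $\cG^{(i)}$ defined over $V_i$. For each $n$, Lemma \ref{prod_lemma} packages the tuple $\{\O(\cG^{(i)}[p^n])\}_{i \in I}$ into a rank $p^{hn}$ projective $R$-module, onto which the factorwise Hopf algebra structures assemble uniquely, producing a finite locally free group scheme $G_n$ of degree $p^{hn}$ over $R$. The transition maps $\cG^{(i)}[p^n] \hookrightarrow \cG^{(i)}[p^{n+1}]$ and the defining short exact sequences $0 \to \cG^{(i)}[p^n] \to \cG^{(i)}[p^{n+1}] \overset{p^n}{\to} \cG^{(i)}[p^n] \to 0$ assemble similarly, since exactness of a sequence of finite locally free modules over $\prod_i V_i$ can be checked factorwise. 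This produces a $p$-divisible group $\cG/R$ of height $h$ whose base change to each $V_i$ recovers $\cG^{(i)}$, and the two functors are inverse equivalences.

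The only point requiring care is phrasing every piece of structure (Hopf algebra laws, the identification of $\cG[p^n]$ with the $p^n$-torsion of $\cG[p^{n+1}]$, and exactness of the defining sequences) as a statement about $R$-module maps, so that Lemma \ref{prod_lemma} applies verbatim. The fact that $R$ is an outright product, rather than a Milnor gluing, makes this immediate: there is no compatibility over a shared quotient to verify, and every relevant diagram commutes levelwise on the factors.
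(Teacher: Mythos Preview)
Your proposal is correct and follows exactly the approach implicit in the paper's exposition: the corollary is stated without proof (as a citation to \cite{Zha23}), and the intended argument is precisely to rerun the proof of Proposition~\ref{BT_milnor} with Lemma~\ref{prod_lemma} in place of Lemma~\ref{milnor}, which is what you do. The only remark is that the paper leaves this entirely to the reader, so there is nothing further to compare.
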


\subsection{Dieudonné modules}
In this subsection we recall the necessary background in Dieudonné theory. We will focus on prismatic Dieudonné theory as developed in \cite{AB23}, since all classification results we will actually need are subsumed in this theory.

The base rings of interest are so-called \textbf{quasi-syntomic} rings. These are $p$-complete rings with bounded $p^\infty$-torsion such that their cotangent complex $L_{R/\Z_p}$ has $p$-complete Tor-amplitude in $[-1,0]$, i.e. the complex 
\[M = L_{R/\Z_p} \otimes^{\mathbb{L}}_R R/p \in D(R/p)\]
satisfies that $M \otimes_R^{\mathbb{L}} N \in D^{[-1,0]}(R/p)$ for any $R/p$-module $N$ (see \cite[3.3.1]{AB23}).

In \cite{AB23}, Anschütz and Le Bras attach to a $p$-divisible group $\cG$ over a quasi-syntomic ring $R$ its prismatic Dieudonné crystal as a sheaf on the small quasi-syntomic site of $R$, which has as covers $p$-complete faithfully flat maps between quasi-syntomic rings. Much simplification arises in the following special case of quasi-syntomic rings. 

\begin{definition}[\cite{AB23}, 3.3.5]
    A quasi-syntomic ring $R$ is called \textbf{quasi-regular semiperfectoid} if there exists a surjection $S \twoheadrightarrow R$ from an integral perfectoid ring $S$ (see Definition \ref{int_perf} below).
\end{definition}

\begin{example}[\cite{AB23}, 3.3.6]
    Any integral perfectoid ring $R$ and any $p$-complete quotient of $R$ by a finite regular sequence, with bounded $p^\infty$-torsion, is quasi-regular semiperfectoid. This includes the main two cases in which we will apply the theory of prismatic Dieudonné modules. Namely, if $(R,R^+)$ is a perfectoid Tate-Huber pair with pseudo-uniformizer $\varpi \in R^+$, then $R^+$ and $R^+/\varpi$ are quasi-regular semiperfectoid (see Lemma \ref{comparison_perfectoid} below).
\end{example}

If $R$ is quasi-regular semiperfectoid, then by \cite[7.2]{BS22} the initial prism $(\Prism_R,I)$ on the prismatic site $(R)_\Prism$ exists and by \cite[4.1.13]{AB23}, giving the prismatic Dieudonné crystal of a $p$-divisible group over $R$ is equivalent to evaluating its associated sheaf on the prismatic site $(R)_\Prism$ at the initial prism. $(\Prism_R,I)$. This process yields the so-called \textbf{contravariant prismatic Dieudonné module}, a finite locally free $\Prism_R$-module $M$ that is equipped with an endomorphism $\varphi_M: \varphi^*M \to M$ (where $\varphi$ is the Frobenius on $\Prism_R$). 

The above construction is contravariant in $\cG$. We will instead work with the \textbf{covariant} version, meaning that we will apply $\Hom_{\Prism_R}(-,\Prism_R)$ to obtain the \textbf{covariant prismatic Dieudonné module} $M_\Prism(\cG)$. The main classification is then the following.

\begin{theorem}[\cite{AB23}, 4.1.12, 4.6.10,]
    Let $R$ be a quasi-regular semi-perfectoid ring. Then the prismatic Dieudonné functor, sending a $p$-divisible group $\cG$ over $R$ to its (covariant) prismatic Dieudonné module $M_\Prism(G)$, is an equivalence between the categories of $p$-divisible groups over $R$ and admissible Dieudonné modules over $R$.
\end{theorem}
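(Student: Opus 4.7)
The plan is to follow Anschütz--Le Bras, whose strategy combines quasi-syntomic descent with the known classification of $p$-divisible groups over integral perfectoid rings (due to Scholze--Weinstein, later extended by Lau).

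First, I would establish that both sides of the proposed equivalence satisfy descent along the quasi-syntomic topology on quasi-regular semiperfectoid rings. On the side of $p$-divisible groups this follows from $p$-completely faithfully flat descent applied levelwise to the $p^n$-torsion, using that each $\cG[p^n]$ is finite locally free and that Hom-sheaves are controlled by their underlying module structure (in the spirit of Lemma \ref{milnor} and Proposition \ref{BT_milnor}, but in the flat topology). On the side of admissible prismatic Dieudonné modules, one must verify that the formation of the initial prism $(\Prism_R, I)$, its Frobenius, and the admissibility condition all behave well under $p$-completely faithfully flat maps of quasi-regular semiperfectoid rings; this is the central technical input of \cite{AB23}.

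Given descent, the statement reduces to the case that $R$ is integral perfectoid. Indeed, by definition of quasi-regular semiperfectoid one can choose a surjection $S \twoheadrightarrow R$ from an integral perfectoid ring $S$; the $p$-completed Čech nerve of $R \to S$ has levelwise integral perfectoid terms, so descent reduces the equivalence for $R$ to the corresponding equivalence over each of these perfectoid rings. In the perfectoid case, the initial prism is $A_{\operatorname{inf}}(S) = W(S^\flat)$ equipped with its canonical Frobenius, and the covariant prismatic Dieudonné module can be identified with the (minuscule) Breuil--Kisin--Fargues module of $\cG$. The equivalence then follows from the classification of $p$-divisible groups over integral perfectoid rings in terms of minuscule Breuil--Kisin--Fargues modules, which recovers Theorem \ref{sw_pdiv} when $R = \O_C$ and whose extension to general integral perfectoid rings is due to Lau.

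The main obstacle is the descent statement for admissible prismatic Dieudonné modules: one must analyse carefully how the Nygaard filtration, the Frobenius structure, and the admissibility condition interact with quasi-syntomic covers, and control the behaviour of the ideal $I \subset \Prism_R$ along the Čech nerve. Once this descent is in place, the reduction to the perfectoid case and the invocation of the Scholze--Weinstein--Lau classification are comparatively routine, as is checking that the constructed inverse functor lands in admissible objects.
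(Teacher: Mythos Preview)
The paper does not prove this theorem; it is quoted from \cite{AB23} as a black box, so there is no argument in the paper to compare against.

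As a sketch of the Ansch\"utz--Le Bras proof, your outline names the right ingredients---quasi-syntomic descent and the perfectoid classification via minuscule Breuil--Kisin--Fargues modules---but the reduction step is confused. You write that one takes ``the $p$-completed \v{C}ech nerve of $R \to S$'', where $S \twoheadrightarrow R$ is the defining surjection from an integral perfectoid ring. There is no map $R \to S$; the map goes the other way, and the surjection $S \to R$ is not flat, hence not a quasi-syntomic cover of $R$. One cannot run \v{C}ech descent along it, and the terms $S \otimes_R S, \ldots$ do not even make sense as $R$-algebras. Quasi-syntomic descent in \cite{AB23} runs in the opposite direction: QRSP rings form a \emph{basis} for the quasi-syntomic site of quasi-syntomic rings, so descent is used to pass \emph{from} QRSP rings \emph{to} general quasi-syntomic rings, not from perfectoid rings to QRSP rings.

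The equivalence over QRSP rings themselves is established more directly. Full faithfulness comes from a comparison with (filtered) crystalline Dieudonn\'e theory, which is fully faithful in this setting by earlier work of Scholze--Weinstein and Lau. Essential surjectivity is the harder half: given an admissible module over a QRSP ring $R$, one builds the $p$-divisible group using the relationship between $\Prism_R$ and $\Prism_S = W(S^\flat)$ together with deformation-theoretic arguments of Grothendieck--Messing type along the nilpotent thickenings coming from $S \twoheadrightarrow R$. The perfectoid case is indeed the anchor, but the passage from perfectoid to QRSP is by deformation along a closed immersion, not by \v{C}ech descent along a cover.
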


As in \cite{Zha23}, the following two examples will be our key applications of prismatic Dieudonné theory. 

\begin{example}[\cite{AB23}, 4.3.6]
    If $R$ is an integral perfectoid ring, the associated prism is
    \[(\Prism_R, I) = (W(R^\flat), \ker(\theta \circ \varphi^{-1})),\]
    where $\theta: W(R^\flat) \to R$ is Fontaine's theta map. In this case an admissible prismatic Dieudonné module is the same as a \textbf{minuscule Breuil-Kisin-Fargues module} with one leg at $V(\ker(\theta \circ \varphi^{-1}))$. That is, it is a finite projective $W(R^\flat)$-module $M$ together with a $\varphi$-linear isomorphism
    \[\varphi_M: M[\frac{1}{\xi}] \overset{\sim}{\longrightarrow} M[\frac{1}{\varphi(\xi)}],\]
    where $\xi$ is a chosen generator of $\ker(\theta \circ \varphi_R^{-1})$, such that $M \subset \varphi_M(M) \subset \frac{1}{\varphi(\xi)}M$.
\end{example}

\begin{example}[\cite{AB23}, 4.3.3]
    Let $R$ be quasi-regular semi-perfectoid such that $pR = 0$, e.g. $R = R^+/\varpi$ where $R^+$ is integral perfectoid and $\varpi$ a pseudo-uniformizer. Then its associated prism is
    \[(\Prism_R, I) = (A_{\cris}(R), (p)),\]
    and in this case an admissible prismatic Dieudonné module agress with the naive dual of the contravariant crystalline Dieudonné module of Berthelot, Breen and Messing.
\end{example}

For any integral perfectoid ring $R^+$ in characteristic $p$, there is a natural map $W(R^+) \to A_{\cris}(R^+/\varpi)$, and for any $p$-divisible group $\cG$ over $R^{\sharp+}$ there is a comparison isomorphism
\begin{align}
    M_\Prism(\cG) \otimes_{W(R^+)} A_{\cris}(R^+/\varpi) \overset{\sim}{\to} M_{\cris}(\cG \times_{R^{\sharp+}} R^+/\varpi),
\end{align}
see \cite[17.5.2]{SW20}. Later on, this comparison isomorphism is going to be the reason why the diagram we want to show is Cartesian is 2-commutative.

\subsection{Serre-Tate theory}
In general, Serre-Tate theory answers the question inhowfar the deformations of an abelian variety are controlled by its associated $p$-divisible group. We include two such results that will be needed later on, in particular to show that the definition of our hybrid Igusa stack is well-defined.

\begin{theorem}[\cite{CS24}, 2.4.1]\label{st_1}
    Let $S' \twoheadrightarrow S$ be a surjection of rings in which $p$ is nilpotent, such that its kernel $I \subset S'$ is nilpotent.
    \begin{enumerate}
        \item The functor $\cG_{S'} \mapsto \cG_{S'} \times_{S'} S$ from the category of $p$-divisible groups up to isogeny over $S'$ to the category of $p$-divisible groups up to isogeny over $S$ is an equivalence.
        \item The functor $A_{S'} \mapsto A_{S'} \times_{S'} S$ from the category of abelian schemes up to $p$-power isogeny over $S'$ to the category of abelian schemes up to $p$-power isogeny over $S$ is an equivalence. 
    \end{enumerate}
\end{theorem}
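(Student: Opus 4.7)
The plan is to prove part (1) first and then deduce part (2) via the classical Serre-Tate theorem, which (before inverting isogenies) relates deformations of abelian schemes to deformations of their associated $p$-divisible groups. For part (1), I would dévissage along the nilpotent filtration $I \supseteq I^2 \supseteq \cdots \supseteq I^N=0$ of the kernel, together with the filtration coming from the nilpotence of $p$ on $S'$, to reduce to the base case of a square-zero thickening $S' \twoheadrightarrow S$ with $pI=0$. In this case $S'$ is (trivially) a divided-power thickening of $S$, so the crystalline / prismatic Dieudonné formalism recalled above is available.

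For full faithfulness in part (1), given $\cG_{S'},\cH_{S'}$ over $S'$, the reduction map $\Hom_{S'}(\cG_{S'},\cH_{S'}) \to \Hom_S(\cG_S,\cH_S)$ is injective by the rigidity of $p$-divisible groups under nilpotent thickenings: any homomorphism vanishing modulo $I$ factors through the $I$-torsion of $\cH_{S'}$, which is annihilated by $p$, contradicting $p$-divisibility. For the image, the obstruction to lifting a given $f_S \colon \cG_S \to \cH_S$ is controlled by an $\Ext^1$-group with coefficients in a module killed by $p$ (since $pI=0$), so $p^m f_S$ lifts uniquely for some $m$; after $\otimes \Q$ this yields the desired isomorphism of Hom-groups. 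For essential surjectivity, given $\cG_S$ over $S$, I would use Grothendieck-Messing (or its prismatic refinement of \cite{AB23} recalled in the previous section): the Dieudonné crystal of $\cG_S$ extends canonically to $S'$, and producing a lift amounts to lifting the Hodge filtration inside this crystal. The relevant obstruction class again lies in a $p$-torsion module, so after replacing $\cG_S$ by an isogenous $p$-divisible group (which amounts to a suitable rescaling of the filtration) the obstruction vanishes, yielding a lift up to isogeny.

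Part (2) is then immediate from classical Serre-Tate: the functor $A_{S'} \mapsto (A_S, A_{S'}[p^\infty], \id)$ identifies abelian schemes over $S'$ with triples $(A_S,\cG_{S'},\iota)$ where $\iota \colon \cG_{S'}\times_{S'}S \iso A_S[p^\infty]$; the prime-to-$p$ torsion being étale, it lifts uniquely. Passing to the $p$-power isogeny category and applying part (1) to the $p$-divisible group component yields the desired equivalence. The main obstacle will be essential surjectivity in part (1): verifying that obstructions to lifting a $p$-divisible group along a square-zero thickening with $pI=0$ are annihilated by a power of $p$, so that they vanish in the isogeny category. This is where one really has to exploit the crystalline/prismatic nature of the Dieudonné functor rather than more elementary formal deformation theory, using that the filtered $\Ext^1$-groups controlling deformations of Hodge filtrations are torsion over $\Z_p$.
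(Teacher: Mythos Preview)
The paper does not actually supply a proof of this statement: Theorem~\ref{st_1} is simply quoted from \cite[2.4.1]{CS24} with no argument given (only the subsequent Theorem~\ref{st_2} receives a proof, and that proof \emph{uses} Theorem~\ref{st_1}). So there is no ``paper's own proof'' to compare against.

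That said, your outline is essentially the standard Drinfeld--Katz argument and is broadly correct, with two remarks. First, your worry about essential surjectivity in part~(1) is misplaced: $p$-divisible groups lift along nilpotent thickenings \emph{on the nose}, not merely up to isogeny. In the square-zero, $pI=0$ case you reduce to, Grothendieck--Messing says a lift corresponds to a lift of the Hodge filtration inside the value of the Dieudonn\'e crystal on $S'$, and a locally direct summand of a locally free module always lifts along a nilpotent thickening (lift any splitting idempotent). There is no obstruction to kill, so no rescaling or passage to the isogeny category is needed for existence. The isogeny relaxation is only needed for \emph{full faithfulness}, where your sketch (Drinfeld rigidity: the kernel of $\cH_{S'} \to i_*\cH_S$ is killed by a power of $p$, so both the kernel and cokernel of the restriction map on Hom-groups are $p$-power torsion) is correct in substance, though your phrasing ``contradicting $p$-divisibility'' is not quite the right mechanism.

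Second, be careful with circularity in part~(2): the paper's proof of Theorem~\ref{st_2} invokes both parts of Theorem~\ref{st_1}, so if you want to deduce part~(2) from ``classical Serre--Tate'' you must mean the direct Drinfeld argument (as in Katz's exposition) for the equivalence of deformation categories, not the paper's derivation. With that understood, your reduction of part~(2) to part~(1) plus Serre--Tate is fine; alternatively, one can run the same Drinfeld rigidity argument directly for abelian schemes, which is what \cite{CS24} does.
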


\begin{theorem}[\cite{CS24}, 2.4.2]\label{st_2}
    Let $S' \twoheadrightarrow S$ be a surjection of rings in which $p$ is nilpotent, such that its kernel $I \subset S'$ is nilpotent. Then the functor
    \[A_{S'} \mapsto (A_S, A_{S'}[p^\infty],\id)\]
    is an equivalence between the category of abelian schemes over $S'$ and the category of triples consisting of an abelian scheme $A_S$ over $S$, a $p$-divisible group $\cG_{S'}$ over $S'$ and an isomorphism $\rho: A_S[p^\infty] \overset{\sim}{\to} \cG_{S'} \times_{S'} S$.
\end{theorem}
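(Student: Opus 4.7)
My approach would be to reduce to square-zero thickenings and then invoke crystalline deformation theory in parallel for abelian schemes and for their $p$-divisible groups. Since $I$ is nilpotent, we can factor $S' \twoheadrightarrow S$ as a composition of finitely many square-zero thickenings $S'/I^{k+1} \twoheadrightarrow S'/I^k$; by an easy induction, using that the claimed functor is compatible with composition of thickenings on both sides, it suffices to treat the case $I^2 = 0$.

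In the square-zero case, $I$ admits the trivial divided power structure, and Grothendieck-Messing theory classifies lifts of the abelian scheme $A_S$ from $S$ to $S'$ by lifts of the Hodge filtration to a direct summand of $H^1_{\cris}(A_S/S)(S')$. Dually, Messing's crystalline Dieudonné theory classifies lifts of the $p$-divisible group $A_S[p^\infty]$ by lifts of its Hodge filtration inside the Dieudonné crystal $\mathbb{D}(A_S[p^\infty])(S')$. The canonical comparison
\[H^1_{\cris}(A_S/S)(S') \cong \mathbb{D}(A_S[p^\infty])(S')\]
preserves Hodge filtrations and identifies these two deformation problems, producing essential surjectivity and bijection on isomorphism classes of objects.

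For fully faithfulness, I would combine the above with Theorem \ref{st_1}. Given $A_{S'}, B_{S'}$ over $S'$ and a morphism of triples $(f_S, g)$ on the associated data, Theorem \ref{st_1}(2) produces a unique lift of $f_S$ as a $p$-power quasi-isogeny $\tilde f \colon A_{S'} \to B_{S'}$, and Theorem \ref{st_1}(1) identifies $\tilde f[p^\infty]$ with $g$ as quasi-isogenies. Since $g$ is an honest morphism of $p$-divisible groups, the morphism $p^n \tilde f \colon A_{S'} \to B_{S'}$ (which is a genuine morphism for $n$ large enough) kills $A_{S'}[p^n]$, and divisibility of morphisms between abelian schemes by $p^n$ then shows that $\tilde f$ itself is a morphism.

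The main technical obstacle is the full compatibility of the crystalline comparison with the Hodge filtration. This is classical, due to Grothendieck-Messing, and in the present paper's setup it is already encoded in the comparison between prismatic and crystalline Dieudonné modules via the map $W(R^+) \to A_{\cris}(R^+/\varpi)$; at the level of generality of the theorem, the conclusion can simply be cited from \cite{CS24}.
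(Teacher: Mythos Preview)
Your argument is correct, but it takes a genuinely different route from the paper's. The paper gives Drinfeld's elementary proof: for fully faithfulness it analyzes the exact sequence obtained by applying $\Hom_{S'}(A,-)$ to $0 \to K_B \to B \to i_*B_0 \to 0$, shows $\Hom_{S'}(A,K_B)=0$ and $\Ext^1_{S'}(A,K_B)\cong\Hom_{S'}(A[p^N],K_B)$ via the multiplication-by-$p^N$ sequence, and then uses Theorem~\ref{st_1}(1) to identify the lifted map on $p$-divisible groups. For essential surjectivity it never touches crystalline theory: it picks an arbitrary lift $A'$ of $A_0$ in the $p$-isogeny category via Theorem~\ref{st_1}(2), lifts the resulting isogeny on $p$-divisible groups via Theorem~\ref{st_1}(1), and takes $A:=A'/\ker\varphi$. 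Your fully faithfulness argument is essentially a slicker repackaging of the same idea (lift as quasi-isogeny, then divide out), and is fine. Your essential surjectivity, by contrast, replaces the quotient construction with Grothendieck--Messing deformation theory plus the comparison $H^1_{\cris}(A_S/S)(S')\cong\mathbb{D}(A_S[p^\infty])(S')$; this is more conceptual but invokes heavier machinery.

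One caution: be careful that your appeal to ``Grothendieck--Messing theory classifies lifts of the abelian scheme $A_S$'' is not circular. In many references the statement that deformations of abelian schemes across PD-thickenings are governed by lifts of the Hodge filtration on $H^1_{\cris}$ is \emph{deduced} from Serre--Tate (i.e., from the very theorem you are proving) combined with Messing's result for $p$-divisible groups. You need an independent proof of the abelian-scheme side, e.g.\ via classical deformation theory identifying the obstruction/deformation space $H^1(A,T_A)\otimes I$ with $\Hom(\omega_A, H^1(A,\O_A))\otimes I$, or via the Mazur--Messing universal extension. If you cite such a source explicitly, your argument stands; otherwise the paper's elementary approach has the advantage of being self-contained modulo Theorem~\ref{st_1}.
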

\begin{proof}
    Let $i: \Spec(S) \hookrightarrow \Spec(S')$ denote the inclusion and fix $N \gg 0$ such that $(p,I)^N = 0$ both in $S'$.

    First, we prove fully faithfulness. Assume we are given two abelian schemes $A_0, B_0$ over $S$ with lifts $A, B$ over $S'$. We will view all these abelian schemes as fpqc-sheaves over their respective base. Denote by $K_B$ the kernel of the reduction map $B \to i_*B_0$. Applying $\Hom_{S'}(A,-)$, we get the exact sequence
    \[\Hom_{S'}(A,K_B) \to \Hom_{S'}(A,B) \overset{res}{\to} \Hom_S(A_0, B_0) \to \Ext_{S'}^1(A,K_B).\]
    In order to prove fully faithfulness, we analyze $\Hom_{S'}(A,K_B)$ and $\Ext_{S'}^1(A,K_B)$. For this, we consider the multiplication-by-$p^N$ short exact sequence
    \begin{align*}
        0 \to A[p^N] \to A \overset{p^N}{\to} A \to 0
    \end{align*}
    Applying $\Hom_{S'}(-,K_B)$ to the above, we get the long exact sequence
    \begin{align*}
        0 \to \Hom_{S'}(A,K_B) &\overset{p^N}{\to} \Hom_{S'}(A,K_B) \to \Hom_{S'}(A[p^N],K_B)\\
        & \to \Ext_{S'}^1(A,K_B) \overset{p^N}{\to} \Ext_{S'}^1(A,K_B) 
    \end{align*}
    Since $K_B$ is killed by $p^N$, the two arrows labelled by $p^N$ are zero. First, this implies that $\Hom_{S'}(A,K_B) = 0$ and thus the functor is faithful. Second, we see that $\Hom_{S'}(A[p^N],K_B) \overset{\sim}{\to} \Ext_{S'}^1(A,K_B)$, which implies that for any map $f_0 \in \Hom_{S}(A_0,B_0)$, $p^Nf_0$ is always liftable to some $f': A \to B$. When applying $\Hom_{S'}(-,B)$ to the multplication-by-$p^N$ sequence, one sees that $f_0$ itself is liftable if $f'$ annihilates $A[p^N]$. To finish the proof of fullness, note that we are additionally given a map $f[p^\infty]: A[p^\infty] \to B[p^\infty]$ that lifts $f_0[p^\infty]$. Thus by part (1) of Theorem \ref{st_1}, it is necessary that $f'$ induces $p^Nf[p^\infty]: A[p^\infty] \to B[p^\infty]$. In particular, $f'$ annihilates $A[p^N]$ as desired. 

    Next, we show essential surjectivity. Assume we are given an abelian scheme $A_0$ over $S$ together with a $p$-divisible group $\cG$ over $S'$ and an isomorphism $A_0[p^\infty] \iso \cG \times_{S'} S$. According to Theorem \ref{st_1}, we may pick an arbitrary lift $A'$ of $A_0$ over $S$ (which is unique up to $p$-power isogeny). Without loss of generality, we may assume that the map $A'_S \to A_0$ is an actual isogeny (instead of just a quasi-isogeny). By Theorem \ref{st_1}, the induced isogeny $A'_S[p^\infty] \to A_0[p^\infty]$ lifts to an isogeny $\varphi: A'[p^\infty] \to \cG$ over $S'$. We define $A := A'/\ker\varphi$, and then $A[p^\infty] \iso \cG$ and $A$ still lifts $A_0$.
\end{proof}

\section{Mixed-characteristic affine Grassmannian and Hodge-Tate period map}
In this chapter we will introduce the $B_{\dR}^+$-affine Grassmannian and extend the construction of the Hodge-Tate period map from Hodge-type Shimura varieties to our hybrid space and our abelian-type Shimura variety.

\subsection{Mixed-characteristic affine Grassmannian}
Since the content in this subsection holds in much greater generality, we will fix a more general framework for the time being. Namely, we fix a finite extension $E/\Q_p$ with ring of integers $\O_E$, uniformizer $\pi$ and residue field $\F_q$. Furthermore, we fix a connected reductive group $G$ over $E$. In the next subsection, we will revert back to our usual notation.

Consider the functor of \textbf{ramified Witt vectors} 
\[W_{\O_E}(-): \{\text{Perfect $\F_q$-algebras}\} \longrightarrow \{\text{$\pi$-torsionfree $\pi$-complete $\O_E$-algebras}\},\]
which via the $p$-typical Witt vectors $W(-)$ is given by $W_{\O_E}(R) = W(R) \widehat{\otimes}_{W(\F_q)} \O_E$. For a perfect $\F_q$-algebra $R$ with $q$-Frobenius $\varphi$, we write $\varphi_R := W_{\O_E}(\varphi)$ for the unique lift of $\varphi$ to $W_{\O_E}(R)$ and call it the Frobenius endomorphism on $W_{\O_E}(R)$.

$W_{\O_E}(-)$ is left-adjoint to the \textbf{tilting functor}
\[(-)^\flat: A \mapsto \invlim_{x \mapsto x^q} A/p.\]
The adjunction counit $\theta: W_{\O_E}(A^\flat) \to A$ is called \textbf{Fontaine's theta map}. 

\begin{definition}[\cite{SW20} 17.5.1]\label{int_perf}
    An $\O_E$-algebra $A$ is called \textbf{integral perfectoid} if it is of the form $W_{\O_E}(R)/I$ for some perfect $\F_q$-algebra $R$ and a principal ideal $I$, such that $W(R)$ is $I$-adically complete and such that $I$ is generated by a \textbf{distinguished element} $d$, i.e. an element satisfying
    \[\frac{\varphi_R(d) - d^p}{p} \in W_{\O_E}(R)^\times\]
\end{definition}
Given an integral perfectoid $O_E$-algebra $A = W_{\O_E}(R)/I$, we have a canonical isomorphism $R \simeq A^\flat$ which identifies $I$ with $\ker(\theta)$. Furthermore, any generator of $I$ turns out to be distinguished. We will denote such a choice of generator by $\xi$.

The concept of integral perfectoid algebras relates to the notion of perfectoid Tate-Huber pairs (see Definition \ref{thp}) as follows.

\begin{lemma}[\cite{BMS18}, 3.21, 3.22]\label{comparison_perfectoid}
    If $(R,R^+)$ is a perfectoid Tate-Huber pair, then $R^+$ is an integral perfectoid $\Z_p$-algebra. Conversely if $A$ is an integral perfectoid $\Z_p$-algebra that is $\varpi$-complete for some non-zerodivisor $\varpi$ such that $\varpi^p$ divides $p$, then $(A[1/\varpi], A)$ is a perfectoid Tate-Huber pair (endowed with the $\varpi$-adic topology). 
\end{lemma}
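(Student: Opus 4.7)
The plan is to handle both implications by using Fontaine's theta map $\theta \colon W(R^{\flat+}) \to R^+$ (resp.\ $W(R) \to A$) as the main bridge; both directions amount to realizing the given ring as a Witt ring modulo a principal ideal generated by a distinguished element.

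For the forward direction, let $(R,R^+)$ be a perfectoid Tate-Huber pair with pseudo-uniformizer $\varpi$ satisfying $\varpi^p \mid p$. The tilt $R^{\flat+} := \invlim_{x \mapsto x^p} R^+$ is tautologically a perfect $\F_p$-algebra. I would first establish the standard tilt identification $R^{\flat+}/\varpi^\flat \iso R^+/\varpi$, where $\varpi^\flat \in R^{\flat+}$ is a compatible system of $p$-power roots of $\varpi$; this uses the perfectoid Frobenius isomorphism $R^+/\varpi \iso R^+/\varpi^p$. Next I would deduce surjectivity of $\theta$ via successive approximation: any $x \in R^+$ can be lifted Teichmüller-by-Teichmüller to an expansion $\sum_n [y_n] \cdot [\varpi^\flat]^n$, using that $R^+$ is automatically $p$-adically complete (because $\varpi^p \mid p$ makes $\varpi$-adic completeness imply $p$-adic completeness). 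The heart of the argument is exhibiting a distinguished generator of $\ker \theta$: choose $\varpi^\flat$ with $(\varpi^\flat)^\sharp = \varpi$ and produce $\xi = p + [\varpi^\flat] \cdot a$ with $a \in W(R^{\flat+})$ arranged so that $\theta(\xi) = 0$; such $\xi$ is distinguished since modulo $p$ it equals $[\varpi^\flat] \cdot a$, while $\varphi(\xi) \equiv [\varpi^\flat]^p \cdot \varphi(a) \pmod p$, so a direct unit computation shows $(\varphi(\xi) - \xi^p)/p$ is a unit in $W(R^{\flat+})$. Principality $\ker \theta = (\xi)$ and $\xi$-adic completeness of $W(R^{\flat+})$ are then standard via $\xi$-adic approximation.

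For the converse, let $A = W(R)/\xi$ with $R$ perfect and $\xi$ distinguished, and let $\varpi \in A$ be a non-zero-divisor with $\varpi^p \mid p$ and such that $A$ is $\varpi$-complete. To verify that $(A[1/\varpi], A)$ is a perfectoid Tate-Huber pair, I would check each axiom in turn: Tate-ness is immediate because $\varpi$ is a topologically nilpotent unit in $A[1/\varpi]$; completeness follows from $\varpi$-completeness of $A$; uniformity together with $A$ being open, bounded, and integrally closed in $A[1/\varpi]$ follows from the quotient description $A = W(R)/\xi$ (a Witt ring of a perfect $\F_p$-algebra modulo a distinguished element is integrally closed in its localization by $\varpi$); and the Frobenius isomorphism $\Phi \colon A/\varpi \iso A/\varpi^p$ is deduced by lifting the tautological Frobenius on the perfect ring $R$ through $W(R) \twoheadrightarrow A$, using distinguishedness of $\xi$ to see that the induced map at the level of $A$ identifies $A/\varpi$ with $A/\varpi^p$.

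The main obstacle is the forward direction's construction of a distinguished generator of $\ker \theta$, which requires a delicate interplay between Teichmüller lifts and the Witt vector Frobenius, as well as the subsequent proof of principality. In the converse direction, the most delicate check is the Frobenius axiom, which turns on distinguishedness of $\xi$ being compatible with the $\varpi$-adic topology on $A$; this is precisely what forces $\varpi^p \mid p$ into the hypotheses.
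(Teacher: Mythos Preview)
The paper does not give its own proof of this lemma; it simply records the statement and cites \cite[3.21, 3.22]{BMS18}. So there is nothing in the paper to compare your argument against.

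That said, your sketch is broadly the standard route taken in the cited reference: realize $R^+$ as $W(R^{\flat+})/(\xi)$ via surjectivity of $\theta$ and a distinguished generator $\xi$ of $\ker\theta$, and conversely verify the perfectoid axioms for $A[1/\varpi]$ directly from the presentation $A = W(R)/(\xi)$. Two places where your outline is thinner than it should be: first, in the forward direction you need not only that $\xi$ is distinguished but that $W(R^{\flat+})$ is $(\xi)$-adically complete (this is part of Definition~\ref{int_perf}), and this requires knowing that $W(R^{\flat+})$ is $(p,[\varpi^\flat])$-adically complete, which in turn rests on $\varpi^\flat$-adic completeness of $R^{\flat+}$. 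Second, in the converse direction the assertion that $A$ is integrally closed in $A[1/\varpi]$ is not as automatic as your parenthetical suggests; in \cite{BMS18} this is handled via the equivalent characterizations of integral perfectoid rings (their Lemma~3.9) together with the identification $A = A[1/\varpi]^\circ$, and it genuinely uses the hypothesis that $\varpi$ is a non-zerodivisor. Neither point is a gap in the strategy, but both would need to be filled in for a complete proof.
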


\begin{definition}
    Let $R$ be a perfect $\F_q$-algebra. An \textbf{untilt} of $R$ over $\O_E$ is a pair $(A,\iota)$, where $A$ is an integral perfectoid $\O_E$-algebra and $\iota: A^\flat \overset{\sim}{\to} R$ is a fixed isomorphism.
\end{definition}

We will usually suppress the isomorphism $\iota$ in our notation.

\begin{definition}
    Given an integral perfectoid $\O_E$-algebra $R^{\sharp +}$ with tilt $R^+$, let $\xi$ be a generator of $\ker(\theta)$ and assume that $R^\sharp := R^{\sharp +}[1/\pi] \neq 0$, then we define the following \textbf{de Rham period rings}: We denote by $B_{\dR}^+(R^\sharp)$ the $\xi$-adic completion of $W_{\O_E}(R^+)[1/\pi]$, and define $B_{\dR}(R^\sharp) := B_{\dR}^+(R^\sharp)[1/\xi]$.
\end{definition}

As the notation already suggests, these definitions only depend on the perfectoid Tate ring $R^\sharp$ and not on the integral subring $R^{\sharp +}$.

\begin{remark}
    If $C$ is an algebraically closed, complete non-archimedean field extension of $E$, then by the Cohen structure theorem we have an isomorphism
    \[B_{\dR}^+(C) \iso C[\![\xi]\!],\]
    which is however highly non-canonical (in particular it is not Galois equivariant).
\end{remark}

\begin{definition}[\cite{SW20}, 19.1] 
    The \textbf{$B_{\dR}^+$-affine Grassmannian} $\Gr_G$ associated to $G$ is the v-sheaf on $\Perf / \Spd E$ that sends $S = \Spa(R,R^+)$ with a fixed untilt $S^\sharp = \Spa(R^\sharp,R^{\sharp+}) / \Spa E$ to the isomorphism classes of tuples $(\cF,\alpha)$, where $\cF$ is a $G$-torsor over $\Spec B_{\dR}^+(R^\sharp)$ and $\alpha$ is a trivialization of $\cF$ over $\Spec B_{\dR}(R^\sharp)$.
\end{definition}
Equivalently, this is also the étale sheafification of the presheaf
\[S / \Spd E \longmapsto G(B_{\dR}(R^\sharp))/G(B_{\dR}^+(R^\sharp)),\]
see \cite[19.1.2]{SW20}.

As we will see later when introducing the relative Fargues-Fontaine curve $X_S$ (ranging over $S \in \Perf$), the v-sheaf $\Gr_G$ can be interpreted in terms of modifications of $G$-torsors on the Fargues-Fontaine curve. Namely, an $S$-point of $\Gr_G$ will correspond to a modification of the trivial $G$-torsor on the Fargues-Fontaine curve $X_S$ at the Cartier divisor associated with the untilt $S^\sharp$ of $S$.

The $B_{\dR}^+$-affine Grassmannian enjoys very nice geometric properties, some of which we collect below.

\begin{prop}[\cite{SW20}, 19.1.4,19.2.4]
    The $B_{\dR}^+$-affine Grassmannian $\Gr_G$ is partially proper and a union of spatial diamonds.
\end{prop}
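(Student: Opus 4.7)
The plan is to verify each of the two properties (partial properness and presentation as a union of spatial diamonds) separately, following the strategy in \cite{SW20}.

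For partial properness, first I would establish that $\Gr_G$ is separated, i.e.\ that its diagonal is a closed immersion. Given two $S$-points $(\cF_1,\alpha_1)$ and $(\cF_2,\alpha_2)$, the locus where they agree is cut out by asking that the isomorphism $\alpha_2^{-1} \circ \alpha_1$ of the trivial $G$-torsor over $\Spec B_{\dR}(R^\sharp)$, which a priori lives in $G(B_{\dR}(R^\sharp))$, actually extends to an element of $G(B_{\dR}^+(R^\sharp))$. Writing this condition pointwise in terms of the valuation of matrix entries after choosing a faithful representation of $G$, this is a closed condition and yields a closed immersion on the diagonal. Second, I would verify the valuative criterion. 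Here the key observation is essentially tautological: the de Rham period rings $B_{\dR}^+(R^\sharp)$ and $B_{\dR}(R^\sharp)$ only depend on $R^\sharp$, not on the integral subring $R^{\sharp+}$. Thus for any perfectoid Tate-Huber pair $(R,R^+)$ with $R^+ \subseteq R^\circ$, the groupoids $\Gr_G(\Spa(R,R^\circ))$ and $\Gr_G(\Spa(R,R^+))$ are literally the same, which gives existence and uniqueness of the dashed arrow in the valuative criterion for free.

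For the presentation as a union of spatial diamonds, I would introduce the closed bounded subfunctors $\Gr_{G,\leq\mu} \subset \Gr_G$ indexed by dominant cocharacters $\mu \in X_*^+(T)$: concretely, $\Gr_{G,\leq\mu}$ parametrizes modifications whose relative position (on geometric points) is bounded by $\mu$ in the dominance order. One checks that each $\Gr_{G,\leq\mu}$ is a closed subfunctor, and that $\Gr_G = \bigcup_\mu \Gr_{G,\leq \mu}$ set-theoretically, since every geometric point of $\Gr_G$ has a well-defined relative position. It remains to show that each $\Gr_{G,\leq\mu}$ is spatial. The standard route is to pick a closed embedding $G \hookrightarrow \GL_n$ and reduce to the case of $\GL_n$; there, after fixing a lattice, $\Gr_{\GL_n, \leq\mu}$ is the v-sheaf of $B_{\dR}^+$-lattices in $B_{\dR}^n$ whose relative position to the standard one is bounded by $\mu$, which can be identified with a proper spatial diamond via a Demazure-type resolution (or by recognizing its geometric points via Beauville-Laszlo gluing on the Fargues-Fontaine curve, cf.\ \cite{SW20} 19.2).

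The main obstacle I would expect is proving the spatiality (in particular, quasicompactness) of the bounded Schubert varieties $\Gr_{G,\leq\mu}$. Separatedness and the valuative criterion are essentially formal once one unwinds the definitions, but showing that $\Gr_{G,\leq\mu}$ is a spatial diamond requires constructing an explicit quasi-pro-étale cover by an affinoid perfectoid space, for which one typically exploits the affine Demazure resolution and the fact that minuscule Schubert cells are isomorphic to flag varieties over $E$, viewed as diamonds. For general $\mu$, one iterates using the Bott-Samelson style presentation to reduce to the minuscule case.
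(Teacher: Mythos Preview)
The paper does not give a proof of this proposition; it is stated as a citation to \cite[19.1.4, 19.2.4]{SW20} and used as a black box. Your outline is a faithful sketch of the argument in \cite{SW20}: partial properness follows from the independence of $B_{\dR}^+(R^\sharp)$ on $R^{\sharp+}$, and the spatial-diamond presentation goes through the bounded Schubert varieties $\Gr_{G,\leq\mu}$, reducing to $\GL_n$ and then to a Demazure resolution built from minuscule cells identified with flag varieties. So there is nothing to compare against in this paper, and your proposal matches the cited source.
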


\begin{prop}[\cite{SW20}, 19.1.5]
    Given a closed embedding $H \hookrightarrow G$ of reductive groups over $\Q_p$, the induced map $\Gr_H \hookrightarrow \Gr_G$ is also a closed embedding.
\end{prop}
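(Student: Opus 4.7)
The plan is to verify that $\Gr_H \hookrightarrow \Gr_G$ is a monomorphism, check properness via the Schubert stratification, and then combine these to conclude closedness. Unwinding the definitions, for affinoid perfectoid $S = \Spa(R,R^+)$ with untilt $S^\sharp$, an $S$-point of $\Gr_G$ is (étale-locally) a class $[g]$ with $g \in G(B_{\dR}(R^\sharp))$ modulo right multiplication by $G(B_{\dR}^+(R^\sharp))$, and $\Gr_H$ is the subfunctor of classes admitting a representative in $H(B_{\dR}(R^\sharp))$.

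For the monomorphism property, the crucial identity is
\[H(B_{\dR}(R^\sharp)) \cap G(B_{\dR}^+(R^\sharp)) = H(B_{\dR}^+(R^\sharp)),\]
the intersection taken inside $G(B_{\dR}(R^\sharp))$. This reduces to the fact that $B_{\dR}^+(R^\sharp) \to B_{\dR}(R^\sharp)$ is the localization at the non-zero-divisor $\xi$: if $g: \Spec B_{\dR}^+(R^\sharp) \to G$ factors through the closed subscheme $H$ after inverting $\xi$, then the pullback of the defining ideal of $H$ in $G$ is an ideal of $B_{\dR}^+(R^\sharp)$ that vanishes after inverting $\xi$, hence is zero. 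Consequently the candidate representative of a point of $\Gr_H$ mapping to a given point of $\Gr_G$ is uniquely determined, so $\Gr_H \to \Gr_G$ is injective on $S$-points for every $S$.

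Next I would use the Schubert stratifications $\Gr_H = \bigcup_\nu \Gr_{H,\leq\nu}$ and $\Gr_G = \bigcup_\mu \Gr_{G,\leq\mu}$, where each Schubert variety is a proper spatial diamond. The embedding $H \hookrightarrow G$ sends a cocharacter $\nu$ of $H$ to a cocharacter of $G$, and the (finite) set of $H$-dominant cocharacters bounded by $\nu$ is mapped into a bounded set of $G$-dominant cocharacters; thus for each $\nu$ there is a $G$-dominant $\mu = \mu(\nu)$ such that the restriction $\Gr_{H,\leq\nu} \to \Gr_G$ factors through the proper spatial diamond $\Gr_{G,\leq\mu(\nu)}$. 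A map between proper spatial diamonds is itself proper, and combined with Step 1 this makes each $\Gr_{H,\leq\nu} \to \Gr_{G,\leq\mu(\nu)}$ a proper monomorphism, hence a closed immersion. Finally, since the Schubert filtration exhausts $\Gr_G$ by open quasi-compact spatial subdiamonds whose preimages are quasi-compact (by Schubert-boundedness), gluing these closed immersions yields the desired conclusion.

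The main obstacle is the last bookkeeping step: one must check that the Schubert compatibility $\mu(\nu)$ is well-behaved (i.e., that each $H$-Schubert variety really does have quasi-compact image in $\Gr_G$), and that being a closed immersion of v-sheaves is local on the target for the Schubert exhaustion. The former is a statement about how the Cartan decomposition of $H(B_{\dR}(R^\sharp))$ interacts with that of $G(B_{\dR}(R^\sharp))$; the latter follows from the characterization of closed immersions of v-stacks via totally disconnected test perfectoids, since each such test factors through a single Schubert cell by quasi-compactness. Together these reduce the global assertion to the Schubert-level version handled above.
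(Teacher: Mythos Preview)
The paper does not give its own proof; it merely records the statement with a reference to \cite[19.1.5]{SW20}. The argument in \cite{SW20} is quite different from yours: rather than passing through the Schubert stratification, it uses that for reductive $H \subset G$ the quotient $G/H$ is affine. An $S$-point of $\Gr_G$ lies in $\Gr_H$ precisely when the section of the associated $G/H$-bundle over $\Spec B_{\dR}(R^\sharp)$ (coming from the trivialization) extends across the divisor to $\Spec B_{\dR}^+(R^\sharp)$, and affineness of $G/H$ makes this a closed condition outright. This is shorter and sidesteps both the properness of Schubert varieties and the ind-gluing you need at the end.

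Your route via properness is reasonable in spirit, and the monomorphism step is essentially fine: the coset identity $H(B_{\dR}) \cap G(B_{\dR}^+) = H(B_{\dR}^+)$ suffices, since two sections of the sheaf $\Gr_H$ that agree after passing to an \'etale cover already agree globally. One slip: the Schubert varieties $\Gr_{G,\leq\mu}$ are \emph{closed} in $\Gr_G$, not open, so your phrase ``open quasi-compact spatial subdiamonds'' is wrong. What you actually need is that $\Gr_G$ is the filtered colimit of the $\Gr_{G,\leq\mu}$ along closed immersions (its ind-structure), so that any quasi-compact test object factors through one of them; then closedness can be checked after pullback to each $\Gr_{G,\leq\mu}$. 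The bookkeeping you flag---that the preimage of $\Gr_{G,\leq\mu}$ in $\Gr_H$ is contained in some $\Gr_{H,\leq\nu}$---is correct and follows from injectivity of $X_*(T_H) \hookrightarrow X_*(T_G)$ together with finiteness of the set $\{\lambda \in X_*(T_G)^+ : \lambda \leq \mu\}$. So your argument can be completed, but it is considerably more circuitous than the direct approach in \cite{SW20}.
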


Fixing a complete algebraically closed non-archimedean extension $C/E$ and fixing a split torus and Borel $T \subset B \subset G_C$, we have the \textbf{Cartan decomposition} indexed by the dominant cocharacters $X_*^+(T)$ of $T$ (i.e. those cocharacters $\mu \in X_*(T)$ that pair non-negatively with all positive roots that are determined by the choice of $B$):
\[G(B_{\dR}(C)) = \coprod_{\mu \in X_*^+(T)} G(B_{\dR}^+(C)) \cdot \xi^\mu \cdot G(B_{\dR}^+(C)),\]
where $\xi^\mu$ is the image of $\mu(\xi) \in G(B_{\dR}(C))$. For example, this follows from the classical Cartan decomposition by choosing a (non-canonical) isomorphism $B_{\dR}(C) \iso C(\!(\xi)\!)$.

Recall the \textbf{Bruhat order} on $X_*^+(T)$: given $\lambda,\mu \in X_*^+(T)$, we say that $\lambda \leq \mu$ if and only if $\mu - \lambda$ is a non-negative integral linear combination of simple coroots. Using the Bruhat order, the Cartan decomposition yields the \textbf{Bruhat stratification} on $\Gr_{G,C}$, the base change of $\Gr_G$ to $\Spd C$, by defining the following two subfunctors:

\begin{definition}
    Let $\mu \in X_*^+(T)$ be dominant. We let $\Gr_{G,C,\mu}$ be the subfunctor of $\Gr_{G,C}$ that sends $S / \Spd C$ to the set of maps $S \to \Gr_{G,C}$ such that for every geometric point $x = \Spa(\tilde{C}, \tilde{C}^+)$ of $S$, the corresponding $\Spa(\tilde{C}, \tilde{C}^+)$-point of $\Gr_{G,C}$ lies in the coset 
    \[G(B_{\dR}^+(\tilde{C})) \cdot \xi^\mu \cdot G(B_{\dR}^+(\tilde{C}))\]
    of the Cartan decomposition. 

    We define the subfunctor $\Gr_{G,C,\leq \mu}$ of $\Gr_{G,C}$ the same way, except that we allow the corresponding $\Spa(\tilde{C}, \tilde{C}^+)$-point to lie in 
    \[\coprod_{\lambda \leq \mu} G(B_{\dR}^+(\tilde{C})) \cdot \xi^\lambda \cdot G(B_{\dR}^+(\tilde{C})),\]
    where $\leq$ denotes the Bruhat order on $X_*^+(T)$.
\end{definition}

Note that if the $G(C)$-conjugacy class of $\mu$ is already defined over $E$, then $\Gr_{G,\mu}$ is also defined over $E$. Recall that given a cocharacter $\mu \in X_*(T)$, its associated maximal parabolic $P_\mu$ of $G$ is the subgroup of elements $g \in G$ such that the limit $\lim_{t \to 0} \mu(t)^{-1}g\mu(t)$ exists.

\begin{prop}[\cite{SW20}, 19.4.2]
    For any dominant cocharacter $\mu \in X_*^+(T)$, there is a natural Bialynicki-Birula map
    \[\pi_\mu: \Gr_{G,\mu} \to \Fl_{G,\mu}^\diamond,\]
    where $\Fl_{G,\mu}$ is the analytified partial flag variety $G/P_\mu$, where $P_\mu$ is the maximal parabolic subgroup associated with $\mu$. 

    If $\mu$ is minuscule, the Bialynicki-Birula map is an isomorphism.
\end{prop}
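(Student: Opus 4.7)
The plan is to define $\pi_\mu$ explicitly via the Cartan decomposition and then, in the minuscule case, produce an inverse by sending $gP_\mu$ to the modification $g\xi^\mu$. For the construction, given $S = \Spa(R,R^+) \in \Perf/\Spd E$ with a fixed untilt $S^\sharp$, I work v-locally so that the $G$-torsor on $\Spec B_{\dR}^+(R^\sharp)$ underlying any $x \in \Gr_{G,\mu}(S)$ is trivial (at geometric points $B_{\dR}^+$ is a complete DVR, so $G$-torsors are trivial). Then $x$ corresponds to a coset $g \cdot G(B_{\dR}^+(R^\sharp))$ with $g \in G(B_{\dR}(R^\sharp))$, and the stratum condition allows v-local factorizations $g = h_1 \xi^\mu h_2$ with $h_i \in G(B_{\dR}^+(R^\sharp))$. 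I set $\pi_\mu(x) := \bar{h}_1 P_\mu \in (G/P_\mu)(R^\sharp) = \Fl_{G,\mu}^\diamond(S)$, where $\bar{h}_1$ denotes the reduction mod $\xi$. Well-definedness is the content of the next step, and v-descent then promotes this to a morphism of sheaves.

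For well-definedness, two factorizations $g = h_1\xi^\mu h_2 = h_1'\xi^\mu h_2'$ force $h_1^{-1}h_1' \in G(B_{\dR}^+) \cap \xi^\mu G(B_{\dR}^+)\xi^{-\mu}$. An element $h$ of $G(B_{\dR}^+)$ satisfies $\xi^{-\mu}h\xi^\mu \in G(B_{\dR}^+)$ only if, modulo $\xi$, the limit $\lim_{t \to 0}\mu(t)^{-1}\bar{h}\mu(t)$ exists, i.e. $\bar{h} \in P_\mu$ by the very definition of $P_\mu$. Hence $\bar{h}_1 P_\mu = \bar{h}_1' P_\mu$, and $\pi_\mu$ is well-defined. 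Independence of $h_2$ is obvious. The image lies in $\Fl_{G,\mu}^\diamond$ since by construction $\bar h_1 P_\mu \in (G/P_\mu)(R^\sharp)$.

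In the minuscule case, I construct the inverse $\sigma: \Fl_{G,\mu}^\diamond \to \Gr_{G,\mu}$ as follows. Given $gP_\mu \in (G/P_\mu)(R^\sharp)$, v-locally lift $g$ to $\tilde{g} \in G(B_{\dR}^+(R^\sharp))$ using smoothness of $G$ and the surjection $B_{\dR}^+(R^\sharp) \twoheadrightarrow R^\sharp$; send $gP_\mu$ to $\tilde{g}\xi^\mu \cdot G(B_{\dR}^+)$. For independence of the $P_\mu$-coset representative, given $p \in P_\mu(R^\sharp)$, lift through the subgroup scheme $P_\mu$ to $\tilde{p} \in P_\mu(B_{\dR}^+)$; the inclusion $\xi^{-\mu}P_\mu(B_{\dR}^+)\xi^\mu \subset G(B_{\dR}^+)$ (which holds for any $\mu$, using the Levi--unipotent decomposition of $P_\mu$ and the fact that the roots in $\mathrm{Lie}\,U_\mu$ pair non-positively with $\mu$) makes the construction independent of the choice of $p$. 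The image is in $\Gr_{G,\mu}$ by the Cartan decomposition, and $\pi_\mu \circ \sigma = \id$ is immediate.

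The main obstacle, and the unique place where \emph{minusculeness} is essential, is verifying $\sigma \circ \pi_\mu = \id$. This amounts to showing that for $h_1 \in G(B_{\dR}^+)$ with any lift $\widetilde{\bar{h}_1} \in G(B_{\dR}^+)$ of its reduction, one has $\widetilde{\bar{h}_1}^{-1} h_1 \in \xi^\mu G(B_{\dR}^+) \xi^{-\mu}$. Since $\widetilde{\bar{h}_1}^{-1} h_1$ lies in $\ker(G(B_{\dR}^+) \to G(R^\sharp))$, it has the form $1 + \xi Y$ with $Y \in \mathfrak{g}(B_{\dR}^+)$; then conjugation by $\xi^{-\mu}$ scales each root component $Y_\alpha$ by $\xi^{1 - \langle\alpha,\mu\rangle}$, which stays in $B_{\dR}^+$ precisely when $\langle\alpha,\mu\rangle \leq 1$ for every root $\alpha$, i.e. when $\mu$ is minuscule. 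For non-minuscule $\mu$, some roots pair with $\mu$ by at least $2$, so the analogous calculation fails and the Schubert stratum $\Gr_{G,\mu}$ is strictly larger than $\Fl_{G,\mu}^\diamond$; the Bialynicki--Birula map then has non-trivial fibers (related to affine Grassmannians of lower rank) and cannot be an isomorphism. The remaining bookkeeping is the v-sheafification and the compatibility of the two constructions across transition to different choices of lifts, both routine once the key algebraic identity is in place.
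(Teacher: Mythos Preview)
The paper does not prove this proposition; it is stated with a citation to \cite[19.4.2]{SW20} and no argument is given. So there is no ``paper's own proof'' to compare against, and your sketch should be judged on its own.

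Your approach is the standard one and is essentially correct, but two points deserve tightening. First, the expression ``$1+\xi Y$ with $Y\in\mathfrak g(B_{\dR}^+)$'' is only literally meaningful after embedding $G$ into $\GL_n$; the intrinsic version of the argument uses the root subgroup decomposition of the first congruence subgroup $\ker(G(B_{\dR}^+)\to G(R^\sharp))$ and the formula $\xi^{-\mu}u_\alpha(\xi x)\xi^\mu=u_\alpha(\xi^{1-\langle\alpha,\mu\rangle}x)$, which is exactly what you are computing. Second, the well-definedness of $\sigma$ requires not only independence of the $P_\mu$-coset representative but also independence of the chosen lift $\tilde g$ of a fixed representative $g$; two such lifts differ by an element of the first congruence subgroup, and handling this already needs the minuscule hypothesis (same calculation as in your $\sigma\circ\pi_\mu$ step). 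You do cover this implicitly, but the logic would be cleaner if you combined both ambiguities into the single statement that any $h\in G(B_{\dR}^+)$ with $\bar h\in P_\mu(R^\sharp)$ satisfies $\xi^{-\mu}h\xi^\mu\in G(B_{\dR}^+)$ when $\mu$ is minuscule. With these adjustments the argument is complete.
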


\subsection{Explicit computations}\label{grass_explicit}
Let us now return to the setup where $G$ is the reductive group over $\Q$ (or base changed to $\Q_p$) coming from our PEL datum, and where $E$ is the localization of the reflex field $E_0$ at the fixed place $v$ above $p$. To ease notation we will write $\Gr_G = \Gr_{G_{\Q_p}}$ in the following.

Let $\mu$ denote the inverse of the Hodge cocharacter associated to our Shimura datum, which is by design minuscule. To ease notation further, we will abbreviate the Schubert cell labelled by $\mu$ as $\Gr := \Gr_{G,\mu}$.

Let $C$ be an algebraically closed, non-archimedean complete extension of $E$. We will need a description of $\Spd C$-valued points of $\Gr$ in terms of $p$-divisible groups. Since $\Gr$ is partially proper, this is equivalent to describing more general $\Spd(C,C^+)$-valued points of $\Gr$. The Bialynicki-Birula isomorphism $\Gr \simeq \Fl_{G,\mu}^\diamond$ leads us to describe $\Spd C$-valued points of the analytified partial flag variety instead.

\begin{lemma}\label{pts_of_gr}
    $\Spd C$-points of $\Gr$ are in bijection with $p$-divisible groups $\cG$ over $\O_C$ with $G$-structure (see Definition \ref{p-div_with_structure}), together with a trivialization $\alpha: T_p\cG \iso \Lambda$  that is $\O_B$-linear and is an $\O_p$-symplectic similitude with respect to the (lifted) Weil pairing on $T_p\cG$ and the pairing $\langle\cdot,\cdot\rangle_F$ on $\Lambda$.
\end{lemma}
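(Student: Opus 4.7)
The plan is to combine the Bialynicki-Birula isomorphism for minuscule Schubert cells with the Scholze-Weinstein classification of $p$-divisible groups over $\O_C$ (Theorem \ref{sw_pdiv}), and then match the additional $G$-structures on both sides.

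First I would unpack the left-hand side. Since $\mu$ is minuscule, the Bialynicki-Birula map gives an isomorphism $\Gr \iso \Fl_{G,\mu}^\diamond$, so by partial properness the $\Spd C$-points of $\Gr$ are in bijection with the $C$-points of the analytic partial flag variety $\Fl_{G,\mu} = G_{\Q_p}/P_\mu$. Interpreted as a moduli of filtrations, such a $C$-point is the same datum as a $B$-stable $C$-subspace $W \subset V \otimes_{\Q_p} C$ of type $\mu$ that is cut out by the stabilizer $P_\mu$ with respect to the pairing $\langle\cdot,\cdot\rangle_F \otimes C$ (the latter being the symplectic-similitude condition defining $G$).

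Next, I would unpack the right-hand side via Scholze-Weinstein. Theorem \ref{sw_pdiv} identifies a $p$-divisible group $\cG$ over $\O_C$ with the pair $(T_p\cG(\O_C), \Lie\cG \otimes_{\O_C} C)$, where the second factor embeds into the first tensored with $C(-1)$ via the Hodge-Tate filtration. Adding the trivialization $\alpha: T_p\cG \iso \underline{\Lambda}$ identifies the first factor with $\Lambda$, and (after a noncanonical choice $C \iso C(-1)$) realizes the second factor as a $C$-subspace $W \subset V \otimes_{\Q_p} C$. Then the $\O_B$-action $\iota$, combined with $\O_B$-linearity of $\alpha$, forces $W$ to be $B$-stable; the Kottwitz determinant condition on $\Lie\cG$ pins down $W$ to be of type $\mu$; and the $\O_p$-linear polarization $\lambda$, via the duality-compatibility of the Scholze-Weinstein functor together with the symplectic-similitude property of $\alpha$, translates to the $P_\mu$-stability condition on $W$ with respect to $\langle\cdot,\cdot\rangle_F$.

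The main bookkeeping challenge is to convert between the $\O_p$-linear polarization of $\cG$ (as in Definition \ref{p-div_with_structure}) and the $F$-bilinear pairing $\langle\cdot,\cdot\rangle_F$ appearing on the flag variety side; this is precisely what Lemma \ref{forms_trace} accomplishes, matching up the similitude factors in $\O_p^\times$ with those coming from the lifted $F$-bilinear form. Once this compatibility is verified, the two classification theorems dovetail to give the asserted bijection, which is evidently functorial in $C$.
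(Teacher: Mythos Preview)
Your proposal is correct and follows essentially the same route as the paper: reduce to $C$-points of $\Fl_{G,\mu}$ via the Bialynicki-Birula isomorphism, then invoke the Scholze-Weinstein classification (Theorem \ref{sw_pdiv}) with $T=\Lambda$ and $W\subset V\otimes_{\Q_p}C$, and finally transport the $\O_B$-action, the Kottwitz condition, and the polarization/pairing through that equivalence. The paper phrases the flag-variety condition concretely as ``$\O_B$-invariant and maximal isotropic for $\langle\cdot,\cdot\rangle_F$'' rather than your more abstract ``type $\mu$ and $P_\mu$-stabilizer'' language, and it constructs the polarization explicitly from self-duality of $\Lambda$ via $(t,w)\mapsto(\langle\cdot,t\rangle_F,\langle\cdot,w\rangle_F)$ rather than invoking Lemma \ref{forms_trace}, but the content is the same.
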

\begin{proof}
    Using the Bialynicki-Birula isomorphism, we need to describe $\Spd C$-points of $\Fl_{G,\mu}^{\diamond}$. Giving such a point is equivalent to giving an $\O_B$-invariant subspace 
    \[W \subset V \otimes_\Q C \]
    that is maximal isotropic with respect to the pairing $\langle\cdot,\cdot\rangle_F$. 

    By Theorem \ref{sw_pdiv}, the $\Z_p$-lattice $\Lambda$ (which is self-dual with respect to the symplectic pairing $\langle\cdot,\cdot\rangle_F$) and the subspace 
    \[W \subset \Lambda \otimes_{\Z_p} C(-1) = V \otimes_\Q C(-1)\]
    determine a $p$-divisible group $\cG/\O_C$ with trivialized Tate module $T_p\cG \iso \Lambda$. We will silently identify $V \otimes_\Q C(-1)$ with $V \otimes_\Q C$. The $p$-divisible groups arising this way come with the following extra structure:
    \begin{itemize}
        \item The $\O_B$-module structure on the tuple $(\Lambda,W)$ translates into an $\O_B \otimes \Z_p$-module structure on $\cG$. In particular, $T_p\cG$ is also naturally an $\O_B$-module and not just a $\Z_p$-module, and the isomorphism $T_p\cG \iso \Lambda$ is $\O_B$-linear. Note that the Kottwitz determinant condition is also satisfied.
        \item By self-duality with respect to $\langle\cdot,\cdot\rangle_F$, $\Lambda$ can be identified with its dual $\Lambda^*$ as $\O_B$-modules. This induces an $\O_B$-linear polarization $\lambda: \cG \to \cG^\vee$ via
        \[(\Lambda,W) \to (\Lambda^*(1),W^\bot), (t,w) \mapsto (\langle\cdot,t\rangle_F, \langle\cdot,w\rangle_F).\]
        \item The infinite level structure $T_p\cG \iso \Lambda$ is automatically an $\O_p$-symplectic similitude with respect to the (lifted) Weil pairing $\langle\cdot,\cdot\rangle_\lambda$ on $T_p\cG$ and $\langle\cdot,\cdot\rangle_F.$ on $\Lambda$.\qedhere
\end{itemize}
\end{proof}

\subsection{The Hodge-Tate period map}
In this subsection, we extend the Hodge-Tate period map of \cite{CS17} to the good reduction locus of our hybrid space and our abelian-type Shimura variety. We keep the notation from the last subsection. In particular, let $[\nu_h]$ be the $G(\C)$-conjugacy class of the Hodge cocharacter of the Shimura datum. We fix an isomorphism $\overline{\Q}_p \iso \C$ and a Borel pair $(T,B)$ of $G_{\overline{\Q}_p}$. By $\mu$ we will denote a dominant cocharacter that represents the $G(\overline{\Q}_p)$-conjugacy class of $\nu_h^{-1}$.

\begin{prop}\label{ht_hybrid}
    For any sufficiently small tame level subgroup $K^p \subset G(\A_f^p)$, there exists a Hodge-Tate period map of diamonds over $\Spd E$ 
    \[\pi_{HT}: \cX_{\alpha,K^p}^\circ \to \Gr_G\]
    with image lying in $\Gr$. It is equivariant with respect to Hecke action by $G(\Q_p)$, and $G(\A_f^p)$-invariant on the system $\{\cX_{\alpha,K^p}^\circ\}_{\alpha,K^p}$.
\end{prop}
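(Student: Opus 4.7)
The plan is to adapt the Caraiani--Scholze construction of the Hodge-Tate period map to the hybrid setting, landing directly in the $B_{\dR}^+$-affine Grassmannian via prismatic Dieudonné theory. By the obvious analogue of Corollary \ref{grlocus_moduli_cor} for $\cX_{K^p}^\circ$ (with tame level $K^p$ rather than infinite level away from $p$), this diamond is the analytic sheafification of the presheaf that assigns to an affinoid perfectoid $S = \Spa(R,R^+)$ with untilt $S^\sharp = \Spa(R^\sharp, R^{\sharp +})$ over $\Spa E$ the set of tuples $(\fA, \iota, \lambda, \overline{\eta}, \beta)$, where $(\fA, \iota, \lambda, \overline{\eta})$ is a formal abelian scheme over $R^{\sharp +}$ with $G$-structure and hybrid $K^p$-level structure, and $\beta : T_p(\cA_S) \iso \underline{\Lambda}$ is an $\O_B \otimes \Z_p$-linear $\O_p$-symplectic trivialization of the $p$-adic Tate module of its generic fiber. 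It suffices to define the period map functorially on this presheaf.

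Given such data, form $\cG := \fA[p^\infty]$, a $p$-divisible group over $R^{\sharp+}$ with $G$-structure in the sense of Definition \ref{p-div_with_structure}. By Lemma \ref{comparison_perfectoid}, $R^{\sharp+}$ is integral perfectoid and therefore quasi-regular semiperfectoid, so the covariant prismatic Dieudonné module $M := M_\Prism(\cG)$ is a finite projective $W(R^+)$-module. Its tensor-functorial $G$-structure, coming from the $\O_B$-action, the polarization, and the Kottwitz determinant condition, together with base change along $W(R^+) \to B_{\dR}^+(R^\sharp)$, produces a $G$-torsor $\cF$ over $\Spec B_{\dR}^+(R^\sharp)$. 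The prismatic-étale comparison yields a canonical identification $M[1/\xi] \simeq T_p(\cG_S) \otimes_{\Z_p} B_{\dR}(R^\sharp)$, which combined with $\beta$ produces a trivialization $\alpha$ of $\cF$ over $\Spec B_{\dR}(R^\sharp)$. The pair $(\cF, \alpha)$ is the desired $\Gr_G$-point.

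To check that the image lies in $\Gr_{G,\mu}$, we use that membership in a Schubert cell is by definition a pointwise condition, so it suffices to verify the claim at geometric points $\Spa(C, C^+)$, and in fact at rank one geometric points by partial properness. At such a point, by Theorem \ref{sw_pdiv} the data of the $p$-divisible group $\cG/\O_C$ with its $G$-structure and trivialized Tate module is exactly the data described in Lemma \ref{pts_of_gr}, and the Kottwitz determinant condition on $\Lie \cG$ forces the resulting Hodge filtration of $\Lambda \otimes_{\Z_p} C$ to be of type $\mu$. Equivariance under $G(\Q_p)$ is immediate from functoriality: replacing $\beta$ by $g \circ \beta$ replaces $\alpha$ by $g \circ \alpha$, which is precisely how $g$ acts on $\Gr_G$ via the definition of $\cX_{K^p}$. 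Invariance under $G(\A_f^p)$ is likewise clear, since the away-from-$p$ level structure $\overline{\eta}$ does not enter the construction of $(\cF,\alpha)$ at all.

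The main technical obstacle is verifying that $\cF$ really is a $G$-torsor, i.e.\ that the $\O_B$-action, the polarization, and the Kottwitz condition on $\cG$ transport through $M_\Prism(-)$ to the corresponding reductions of structure on the $B_{\dR}^+$-lattice that cut out $G \subset \GL$. This is ensured by the fact that the prismatic Dieudonné functor is a tensor functor compatible with duality, and that the prismatic-étale comparison respects all of this extra structure. The argument is entirely parallel to the Hodge-type case carried out in \cite{DHKZ24}, so no genuinely new input is required at this step.
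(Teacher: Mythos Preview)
Your approach is essentially the same as the paper's: build the $G$-torsor from the prismatic Dieudonn\'e module of $\fA[p^\infty]$, trivialize over $B_{\dR}$ via the \'etale comparison and the given $\beta$, then check the Schubert cell condition pointwise. The paper phrases the $G$-torsor concretely as an $\underline{\Isom}_G$-sheaf rather than appealing to tensor-functoriality, but this is cosmetic.

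The one place where your argument is too quick is $G(\Q_p)$-equivariance. The Hecke action of a general $g \in G(\Q_p)$ on $\cX_{K^p}^\circ$ is \emph{not} simply $\beta \mapsto g \circ \beta$ with $\fA$ unchanged: for $g \notin K_{hs}$, the composite $g \circ \beta$ no longer lands in $\underline{\Lambda}$, so one must modify the abelian variety. Concretely, choose $N$ with $p^N\Lambda \subset g\Lambda \subset p^{-N}\Lambda$, push the image of $g\Lambda$ into $\cA_s[p^{2N}]$ via $\beta^{-1}$, and quotient by it; the result is a new abelian variety $\cA_s'$ with $T_p\cA_s' \iso g\Lambda$. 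The paper verifies equivariance by reducing to rank~1 geometric points (using that both source and target are qcqs and $\Gr$ is partially proper) and then matching this quotient description against the action of $g$ on $(\Lambda, W)$ via Theorem~\ref{sw_pdiv}. Your ``immediate from functoriality'' covers only the $K_{hs}$-part of the action; you should either restrict the claim to $K_{hs}$ or supply the geometric-point argument.
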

\begin{proof}
    We follow the argument in \cite[6.14]{Zha23}. Note that it suffices to construct the Hodge-Tate period map on $\cX_{\alpha,K^p}^{\circ,\pre}$, which has a moduli description in terms of formal abelian schemes with $G$-structure together with a trivialization of the $p$-adic Tate module, see Corollary \ref{grlocus_moduli_cor}.

    Let $S = \Spa(R,R^+)$ and let $(S^\sharp,\fA,\beta)$ be an $S$-point of $\cX_{\alpha,K^p}^{\circ,\pre}$, i.e. $S^\sharp = \Spa(R^\sharp,R^{\sharp+})$ is an untilt of $S$ over $\Spa E$, $\fA$ is a $\Spf R^{\sharp+}$-point of $\X_{\alpha,K^p}$, and $\beta \in \underline{\Isom}_G(T_p\cA,\underline{\Lambda})(S)$ is a trivialization of the Tate module of the generic fiber $\cA$ of $\fA^\diamond$. Let $(M := M(\fA[p^\infty]), \varphi)$ be the prismatic Dieudonné module, which is a finite locally free $W(R^+)$-module equipped with an $\O_B$-module structure and a symplectic pairing $\langle\cdot,\cdot\rangle_M$.
    
    Consider the étale sheaf over $X := \Spec(B_{\dR}^+(R^{\sharp}))$
    \[\cF := \underline{\Isom}_G(M \otimes_{W(R^+)} \O_X, \Lambda \otimes_{\Z_p} \O_X)\]
    of $O_B$-linear isomorphisms that are $F_p$-symplectic similitudes with respect to the pairings $\langle\cdot,\cdot\rangle_M$ and $\langle\cdot,\cdot\rangle_F$. Since we can identify $G_{\Q_p}$ with the group of automorphisms of $\Lambda$ that are $F_p$-symplectic similitudes with respect to $\langle\cdot,\cdot\rangle_F$, this is a $G_{\Q_p}$-torsor.

    To see that $\cF$ is trivialized over $B_{\dR}(R^\sharp)$, first write $T = T_p\cA(S)$, which is a finite projective module over the ring $C^0(\Spec R^+,\Z_p)$. There is an étale-crystalline comparison isomorphism, whose base change to $B_{\dR}(R^\sharp)$ yields
    \[T \otimes_{C^0(\Spec R^+,\Z_p)} B_{\dR}(R^\sharp) \simeq M_{\crys}(\fA[p^\infty] \times_{R^{\sharp+}} R^{\sharp+}/p) \otimes_{A_{\crys}(R^+)} B_{\dR}(R^\sharp)\]
    By \cite[17.5.2]{SW20}, we also have the compatibility isomorphism
    \[M \otimes_{W(R^+)} A_{\crys}(R^+) \simeq M_{\crys}(\fA[p^\infty] \times_{R^{\sharp+}} R^{\sharp+}/p).\]
    Combining these two isomorphisms yields a natural isomorphism
    \[c: T \otimes_{C^0(\Spec R^+,\Z_p)} B_{\dR}(R^\sharp) \overset{\sim}{\to} M \otimes_{W(R^+)} B_{\dR}(R^\sharp)\]
    which is $\O_B$-linear and a $\Q_p$-symplectic similitude, and thus also an $F_p$-symplectic similitude with respect to the lifted pairings. Thus $\alpha := (\beta \otimes \id) \circ c^{-1}$ is a trivializing section of $\cF$ over $\Spec B_{\dR}(R^\sharp)$.

    $G(\A_f^p)$-invariance of $\pi_{HT}$ on the system $\{\cX_{\alpha,K^p}^\circ\}_{\alpha,K^p}$ is immediate from the construction. 

    The exact same argument as in the proof of \cite[6.14]{Zha23} also applies to show that the thus constructed Hodge-Tate period maps $\pi_{HT}: \cX_{\alpha,K^p}^\circ \to \Gr_G$ in fact map into the Schubert cell $\Gr =\Gr_{G,\mu}$. 

    The proof of Hecke equivariance at $p$ is also the same as in \cite{Zha23}: Since both $\cX_{\alpha,K^p}^\circ$ and $\Gr$ are qcqs and the latter is also proper, it suffices to check equivariance on rank 1 geometric points $s= \Spa(C,\O_C)$. For $g \in G(\Q_p)$, there is some $N \gg 0$ such that $p^N\Lambda \subset g\Lambda \subset p^{-N}\Lambda$. Denote by $K$ the image of $g\Lambda$ in the quotient
    \[\overline{\beta}: p^{-N}\Lambda/p^N\Lambda \overset{\sim}{\to} \cA_s[p^N],\]
    where $\cA_s$ denotes the generic fiber. Then Hecke action by $g$ sends $(\cA_s,\beta: T_p\cA_s \iso \Lambda)$ to $(\cA'_s := \cA_s/K, T_p\cA'_s \iso g\Lambda)$, which coincides with the action of $G(\Q_p)$ on $\Gr$ that sends a tuple $(\Lambda,W)$ to $(g\Lambda,W)$.
\end{proof}

By construction the Hodge-Tate period map is invariant with respect to changing the tame level, thus we readily obtain a Hodge-Tate period map
\[\pi_{HT}: \cX^\circ \to \Gr\]
at infinite level everywhere.

\begin{corollary}\label{ht_abelian}
    We have a factorization
\[\begin{tikzcd}
	{\cX^\circ} && \Gr \\
	& {\cX_G^\circ}
	\arrow["{\pi_{HT}}", from=1-1, to=1-3]
	\arrow["\Delta"', from=1-1, to=2-2]
	\arrow["{\pi_{HT}}"', dashed, from=2-2, to=1-3]
\end{tikzcd}\]
    In particular, there is a Hodge-Tate period map $\pi_{HT}: \cX_G^\circ \to \Gr$ with the same properties as before.
\end{corollary}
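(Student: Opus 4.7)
The plan is to exploit Lemma \ref{delta_torsor}, which tells us that $\cX^\circ \to \cX_G^\circ$ is a pro-étale $\Delta$-torsor. Since $\Gr$ is a v-sheaf (in particular a pro-étale sheaf), any $\Delta$-invariant morphism from $\cX^\circ$ to $\Gr$ will descend uniquely to a morphism from $\cX_G^\circ$ to $\Gr$. Thus the corollary reduces to showing that $\pi_{HT}\colon \cX^\circ \to \Gr$ is invariant under the polarization action.

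To prove $\Delta$-invariance, I would trace through the construction of $\pi_{HT}$ given in the proof of Proposition \ref{ht_hybrid}. Given a point $(S^\sharp,\fA,\beta) \in \cX^{\circ,\pre}(S)$ with underlying data $(A,\iota,\lambda,\overline{\eta})$, recall that $\pi_{HT}$ is built from the following inputs: the prismatic Dieudonné module $M$ of $\fA[p^\infty]$, the symplectic pairing $\langle\cdot,\cdot\rangle_M$ induced by $\lambda$, the level structure $\beta$, and the étale-crystalline comparison $c$. Now apply the polarization action by $\epsilon \in \O_{F,+}^\times$: the abelian scheme $\fA$, the $\O_B$-action $\iota$, and the level structure $\overline{\eta}$ (and hence $\beta$) remain unchanged; only $\lambda$ is replaced by $\epsilon\lambda$. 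Consequently $M$, the comparison $c$, and $\beta$ are unchanged, while $\langle\cdot,\cdot\rangle_M$ is replaced by $\epsilon\langle\cdot,\cdot\rangle_M$.

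The crucial point is that the sheaf $\cF = \underline{\Isom}_G(M \otimes \O_X, \Lambda \otimes \O_X)$ used to define the Hodge-Tate period map is defined as $\O_B$-linear isomorphisms which are $F_p$-symplectic similitudes, i.e. the two pairings need only agree up to a scalar in $F_p^\times$. Since $\epsilon \in \O_{F,+}^\times \subset F_p^\times$, rescaling $\langle\cdot,\cdot\rangle_M$ by $\epsilon$ produces the same sheaf $\cF$, and the trivializing section $\alpha = (\beta \otimes \id)\circ c^{-1}$ over $\Spec B_{\dR}(R^\sharp)$ is literally the same. Hence $\pi_{HT}(\epsilon\cdot x) = \pi_{HT}(x)$ at the presheaf level, and therefore after sheafification by Lemma \ref{grlocus_moduli_cor}. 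This $\Delta$-invariance gives the desired factorization $\overline{\pi}_{HT}\colon \cX_G^\circ \to \Gr$ by descent along the pro-étale torsor.

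The additional properties asserted in the last sentence of the corollary (image lying in $\Gr_{G,\mu}$, Hecke equivariance under $G(\Q_p)$, invariance under $G(\A_f^p)$-action) are inherited directly from the corresponding properties of $\pi_{HT}$ on $\cX^\circ$ because $\cX^\circ \to \cX_G^\circ$ is surjective and the polarization action commutes with all these operations (the $G(\A_f)$-action only alters the level structure, not the polarization, so it descends to $\cX_G^\circ$). The main conceptual step, and the only one requiring any real argument, is the $\Delta$-invariance; all other features are formal consequences of descent along a pro-étale torsor. I do not anticipate any serious obstacle, since the key input (that the symplectic similitude factor is allowed to live in $F_p^\times$, hence absorbs the totally positive unit $\epsilon$) is already built into the very definition of the hybrid level structure.
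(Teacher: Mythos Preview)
Your proposal is correct and follows essentially the same approach as the paper: both reduce to checking $\Delta$-invariance of $\pi_{HT}$ via the pro-étale torsor structure of Lemma~\ref{delta_torsor}, and both verify invariance by observing that scaling the polarization by $\epsilon \in \O_{F,+}^\times$ only rescales $\langle\cdot,\cdot\rangle_M$ by $\epsilon$, which is absorbed into the $F_p^\times$-similitude factor defining the sheaf $\underline{\Isom}_G$. Your treatment is in fact slightly more thorough, explicitly noting that the trivializing section $\alpha$ is unchanged and spelling out how the additional properties descend.
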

\begin{proof}
    By Lemma \ref{delta_torsor}, the map $\cX^\circ \to \cX_G^\circ$ is a pro-étale torsor with respect to the profinite group $\Delta$, so it suffices to see that the Hodge-Tate period map $\pi_{HT}: \cX^\circ \to \Gr$ is invariant under the action of $\Delta$. This follows from the fact that polarization action by any $\epsilon \in \O_{F,+}^\times$ changes the polarization $\lambda$ to $\epsilon\lambda$, and thus also changes the lifted pairing $\langle\cdot,\cdot\rangle_{M}$ on $M$ by a factor $\epsilon$. This however does not affect the isomorphism sheaf
    \[\underline{\Isom}_G(M \otimes_{W(R^+)} \O_X, \Lambda \otimes_{\Z_p} \O_X),\]
    where we used the same notation as in the proof of Proposition \ref{ht_hybrid}. This shows the first claim.

    The second claim follows immediately from the fact that the Hecke action on $\cX_G^\circ$ is induced from the Hecke action on $\cX^\circ$.
\end{proof}

\section{Stack of G-bundles on the Fargues-Fontaine curve}
In this section we recall the relative Fargues-Fontaine curve and the classifying stack of $G$-torsors on the Fargues-Fontaine curve. In our exposition we will be following \cite{CS17} and \cite{FS21}. Much of the content holds in greater generality, which is why we will assume $G$ to be any connected reductive group over $\Q_p$ for the time being (there is also an obvious generalization to any finite extension of $\Q_p$, see \cite{FS21}).

\subsection{The Fargues-Fontaine curve and vector bundles}
\begin{definition}
    For any affinoid perfectoid space $S = \Spa(R,R^+) \in \Perf$ with pseudo-uniformizer $\varpi \in R^+$, recall that the \textbf{relative Fargues-Fontaine curve over $S$} has three different incarnations:
    \begin{enumerate}[label=(\roman*)]
        \item Let $Y_S := \Spa(W(R^+))\backslash V(p [\varpi])$, then the \textbf{adic Fargues-Fontaine curve} is the quotient $X_S := Y_S / \varphi^\Z$.
        \item The \textbf{diamond Fargues-Fontaine curve} attached to $X_S$ has the formula $X_S^\diamond \iso (S^\diamond \times \Spd\Q_p)/ (\varphi^\Z \times \id)$.
        \item The \textbf{algebraic Fargues-Fontaine} curve is defined as $X_S^{\alg} := \Proj P$, where 
        \[P := \bigoplus_{n \geq 0} H^0(X_S,\O_{X_S}(n))\]
        and $\O_{X_S}(1)$ is the ample line bundle on $X_S$ descending from $\O_{Y_S}$.
    \end{enumerate}
\end{definition}
The above constructions glue and thus yield adic resp. diamond resp. algebraic Fargues-Fontaine curves over any perfectoid space $S \in \Perf$.

Let $S = \Spa(R,R^+) \in \Perf$ and let $B_{\cris,S}^+:= A_{\cris}(R^+)[\frac{1}{p}]$, then we may also write
\[P = \bigoplus_{d \geq 0} (B_{\cris,S}^+)^{\varphi = p^d}.\]
This observation will become useful later, as it will allow us construct vector bundles on the Fargues-Fontaine curve from the rational crystalline Dieudonné module attached to a $p$-divisible group.

From now on we use the following notation: For a perfectoid space $S = \Spa(R,R^+) \in \Perf$, we write 
\[\Y(S) := \Spa(W(R^+),W(R^+)) \backslash \{[\varpi] = 0\}.\]
For any interval $I = [a,b] \subset [0,\infty]$ with $a,b \in \Q \cup \{\infty\}$, we define the open subspace
\[\Y_I(S) := \{|p|^b \leq |[\varpi]| \leq |p|^a\} \subset \Y(S),\]
and similarly for open intervals $(a,b)$. Note that in this notation $Y_S = \Y_{(0,\infty)}(S)$.

By \cite[11.2.1]{SW20} $\Y_S$ is an analytic adic space. It is covered by rational affinoid sousperfectoid subspaces indexed over $n \in \N$ which are of the form 
\[\Spa(R_n,R_n^+) = \{|p| \leq |[\varpi]|^{\frac{1}{p^n}}\},\]
where $R_n^+$ is the $[\varpi]$-adic completion of the ring $W(R^+)[p/[\varpi^{\frac{1}{p^n}}]]$, and $R_n = R_n^+[1/[\varpi]]$. Each element in $R_n$ can be written as
\[\sum_{i \geq 0} [r_i]\biggl(\frac{p}{[\varpi]^{\frac{1}{p^n}}}\biggl)^i\]
with $R \ni r_i \to 0$, which visibily does not depend on $R^+$. Thus the category of vector bundles on $\Spa(R_n, R_n^+)$ does not depend on the choice of $R^+$. By \cite[2.7.7]{KL15} the same holds for $\Y(S)$, and in particular also for $Y_S$ and $X_S$.

The following GAGA-type theorem relates vector bundles on $X_S$ and $X_S^{\alg}$.
\begin{theorem}[\cite{FS21}, II.2.7]
    Pullback along the natural morphism $X_S \to X_S^{\alg}$ of locally ringed spaces induces an equivalence of categories between the categories of vector bundles on $X_S$ and $X_S^{\alg}$.
\end{theorem}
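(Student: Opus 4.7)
The plan is to establish this GAGA-type equivalence by constructing an explicit inverse to pullback via the $\Proj$ description. Since $X_S^{\alg} = \Proj P$ for $P = \bigoplus_{n \geq 0} H^0(X_S, \O_{X_S}(n))$, the morphism $\pi: X_S \to X_S^{\alg}$ is determined by compatible localizations: for a nonzero homogeneous element $f \in P_d$, the zero locus $V(f) \subset X_S$ is a Cartier divisor whose complement is affinoid with ring of functions identifying with the homogeneous localization $P_{(f)}$. Verifying this identification is the main technical computation that underlies the map of locally ringed spaces in the first place, and will also let us match rings of functions of affine opens $D_+(f) \subset X_S^{\alg}$ with their preimages.

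To construct the inverse functor, I would assign to a vector bundle $\mathcal{E}$ on $X_S$ the graded $P$-module
\[M(\mathcal{E}) := \bigoplus_{n \geq 0} H^0(X_S, \mathcal{E}(n))\]
and associate to it the quasi-coherent sheaf $\widetilde{M(\mathcal{E})}$ on $\Proj P = X_S^{\alg}$. The ampleness of $\O_{X_S}(1)$ is the key geometric input: it implies that, at least locally on $S$, sufficiently high twists $\mathcal{E}(n)$ are globally generated, which forces $M(\mathcal{E})$ to be finitely generated over $P$ and $\widetilde{M(\mathcal{E})}$ to be a coherent locally free sheaf. Compatibility with the localizations above then ensures that $\widetilde{M(\mathcal{E})}$ pulls back to $\mathcal{E}$ along $\pi$. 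Conversely, for a vector bundle $\mathcal{F}$ on $X_S^{\alg}$, the standard Serre-Proj identification recovers $\mathcal{F}$ from its module of twisted global sections, which one matches with $M(\pi^*\mathcal{F})$ by comparing sections across $\pi$, using ampleness again to ensure that no new sections appear after pullback.

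The main obstacle is that both $X_S$ and $X_S^{\alg}$ are far from Noetherian in the relative setting, so standard GAGA arguments do not apply verbatim. I would handle this via v-descent in $S$: vector bundles on both the adic and the algebraic curves satisfy descent along v-covers, so one can reduce first to the case where $S$ is strictly totally disconnected, and then further to its connected components $\Spa(C, C^+)$ with $C$ an algebraically closed perfectoid field. For such geometric $S$, $X_S^{\alg}$ is the classical algebraic Fargues-Fontaine curve, a Dedekind scheme whose vector bundles are completely classified, and the equivalence reduces to the original theorem of Fargues-Fontaine. The genuine work then lies in verifying that these pointwise equivalences assemble into a single functor compatible with arbitrary base change $S' \to S$, and in confirming that the ampleness and cohomological vanishing for $\O_{X_S}(1)$ survive relativization in the required uniform form.
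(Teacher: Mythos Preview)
The paper does not prove this theorem; it is stated with attribution to \cite[II.2.7]{FS21} and used as a black box. There is therefore no proof in the paper to compare your proposal against.

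That said, your outline is broadly in the spirit of the argument in the cited reference: the equivalence is indeed produced by the functor $\mathcal{E} \mapsto \bigoplus_{n \geq 0} H^0(X_S, \mathcal{E}(n))$, and the essential input is ampleness of $\O_{X_S}(1)$ in a suitably uniform relative sense (global generation and $H^1$-vanishing after high enough twist, locally on $S$). Where your sketch diverges from the actual proof is in the reduction step. You propose to use v-descent for vector bundles on both curves to reduce to geometric points and then invoke the classical Fargues--Fontaine result. But v-descent for vector bundles on $X_S$ is itself a nontrivial theorem in \cite{FS21} that is logically downstream of (or at least intertwined with) the GAGA statement, so this risks circularity. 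The proof in \cite{FS21} instead establishes the needed cohomological finiteness and vanishing directly in the relative setting (via the theory of Banach--Colmez spaces), and then the Proj formalism gives the equivalence without any descent. Your ``reassembly'' step, where pointwise equivalences must be shown to glue functorially, is exactly the part that the direct approach avoids.
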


Importantly, Cartier divisors on $X_S$ classify Frobenius orbits of untilts of $S$ as follows. Assume $S^\sharp / \Q_p$ is an untilt of $S$, which is locally of the form $\Spa(R^\sharp, R^{\sharp +})$. Then each kernel of the surjection induced from Fontaine's theta map
\[W(R^+) \iso W(R^{\sharp\flat +}) \overset{\theta}{\twoheadrightarrow} R^{\sharp +}\]
is a principal ideal generated by an element of the form $p - a[\varpi]$ for some $a \in W(R^+)$. The induced maps $\Spa(R^\sharp, R^{\sharp +}) \to Y_{\Spa(R,R^+)}$ glue and define a closed Cartier divisor $S^\sharp \hookrightarrow Y_S$, which induces a closed Cartier divisor $S^\sharp \hookrightarrow X_S$. This is cut out by a global section of $\O_{X_S}(1)$, which by GAGA corresponds to a global section of the line bundle $\O(1)$ on $X_S^{\alg}$, which in turn cuts out a closed Cartier divisor $S^{\sharp,\alg} \hookrightarrow X_S^{\alg}$.

If $S = \Spa(R,R^+)$ is affinoid, then the algebraic Fargues-Fontaine curve $X_S^{\alg}$ can be covered by any two principal opens $D(f_1)$ and $D(f_2)$ with $f_1, f_2 \in H^0(X_S^{\alg},\O(1))$ linearly independent. In particular, if the untilt $S^\sharp$ is cut out by $\xi \in H^0(X_S^{\alg},\O(1))$, then choose any linearly independent $t \in H^0(X_S^{\alg},\O(1))$. The Cartier divisor $S^{\sharp, \alg} \hookrightarrow X_S^{\alg}$ is defined by
\[(P[1/t])_0 \twoheadrightarrow R^\sharp\]
with kernel $(\xi)$. In the meantime, as shown in \cite{FF18}, the $\xi$-adic completion of $P[1/t]_0$ is precisely the de Rham period ring $B_{\dR}^+(R^\sharp)$.

This appearance of the de Rham period ring leads to an interpretation of the $B_{\dR}^+$-affine Grassmannian $\Gr_{\GL_n}$ in terms of modifications of the trivial rank $n$ vector bundle on $X_S$. As a last ingredient for this, we will need the Beauville-Laszlo lemma:

\begin{lemma}[Beauville-Laszlo]
    Let $R$ be a commutative ring, $f \in R$ a non-zerodivisor and $\hat{R} := \invlim R/f^n$ the $f$-adic completion of $R$. Then the category of $R$-modules on which $f$ is a non-zerodivisor is equivalent to the category of triples 
    \[(M_1,M_2, \alpha: M_1[1/f] \overset{\sim}{\to} M_2 \otimes_R \hat{R}),\]
    where $M_1$ is an $\hat{R}$-module on which $f$ is a non-zerodivisor, and $M_2$ is an $R[1/f]$-module. 
\end{lemma}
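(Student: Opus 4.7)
The plan is to exhibit mutually inverse functors between the two categories. In one direction, define $\Phi$ by sending an $R$-module $M$ on which $f$ is a non-zerodivisor to the triple $(\widehat{M}, M[1/f], \alpha_M)$, where $\widehat{M} := \invlim M/f^n M$ is the $f$-adic completion and $\alpha_M \colon \widehat{M}[1/f] \overset{\sim}{\to} M[1/f] \otimes_R \widehat{R}$ is the canonical comparison map. In the other direction, define $\Psi$ by sending a triple $(M_1, M_2, \alpha)$ to the pullback
\[M := M_1 \times_{M_1[1/f]} M_2,\]
where the map $M_2 \to M_1[1/f]$ is the composition $M_2 \to M_2 \otimes_R \widehat{R} \xrightarrow{\alpha^{-1}} M_1[1/f]$ and the map $M_1 \to M_1[1/f]$ is the localization. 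Since $f$ is a non-zerodivisor on $M_1$ and acts invertibly on $M_2$, it is also a non-zerodivisor on the submodule $M \subseteq M_1 \oplus M_2$, so $\Psi$ indeed lands in the expected category.

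The core of the proof is to verify the two compositions are naturally isomorphic to the identity. For $\Psi \circ \Phi$, given $M$ with $f$-action regular, I would fit $M$ into a short exact sequence
\[0 \to M \to \widehat{M} \oplus M[1/f] \to \widehat{M}[1/f] \to 0,\]
where the last map is the difference of the two canonical maps; exactness amounts to the classical fact that $M = \widehat{M} \cap M[1/f]$ inside $\widehat{M}[1/f]$ for $f$-torsion free $M$, which follows from the Artin-Rees style statement that $\bigcap_n f^n \widehat{M} = 0$ together with the characterization of elements of $\widehat{M}$ whose image in $\widehat{M}[1/f]$ comes from $M[1/f]$. This exact sequence identifies $M$ with the pullback defining $\Psi(\Phi(M))$.

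For $\Phi \circ \Psi$, given a triple $(M_1, M_2, \alpha)$ and its pullback $M$, I need to check that $\widehat{M} \cong M_1$ and $M[1/f] \cong M_2$ canonically. The localization identity is the easier one: since $f$ is invertible on $M_2$ and inverting $f$ commutes with the fiber product, localizing $M \hookrightarrow M_1 \oplus M_2$ at $f$ gives the projection onto $M_2$. The completion identity is the main technical obstacle: using the snake lemma applied to the sequence above for $M$ (now viewed as $M_1 \times_{M_1[1/f]} M_2$) modulo $f^n$, together with the fact that $M_1$ is already $f$-adically complete and that $M_2/f^n M_2 = 0$, one deduces compatible isomorphisms $M/f^n M \overset{\sim}{\to} M_1/f^n M_1$; passing to the inverse limit yields $\widehat{M} \cong M_1$. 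The naturality of all the isomorphisms constructed is evident from their definitions, so this gives the desired equivalence of categories.
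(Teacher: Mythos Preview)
The paper does not supply a proof of this lemma; it is quoted as a known result and used as input for the discussion of modifications on the Fargues--Fontaine curve. So there is no ``paper's proof'' to compare against, and the question reduces to whether your argument stands on its own.

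It does not, because your functor $\Phi$ is the wrong one. You send $M$ to its $f$-adic completion $\widehat{M}$, but the correct forward functor in Beauville--Laszlo sends $M$ to the base change $M \otimes_R \widehat{R}$. These coincide for finitely presented $M$ but differ in general, and the lemma as stated covers all $f$-torsion-free $R$-modules. The place where this bites is your verification of $\Phi \circ \Psi \simeq \id$: you write ``using \ldots\ the fact that $M_1$ is already $f$-adically complete'', but nothing in the statement forces $M_1$ to be complete --- it is merely an $\widehat{R}$-module on which $f$ acts regularly.

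Here is a concrete failure. Take $R = k[t]$, $f = t$, so $\widehat{R} = k[\![t]\!]$. Let $M_1 = k(\!(t)\!)$, $M_2 = k[t,t^{-1}]$, and $\alpha$ the identity on $k(\!(t)\!)$. Then the fibre product $M = M_1 \times_{M_1[1/f]} M_2$ is $k[t,t^{-1}]$, whose $t$-adic completion is $0$, not $M_1$. By contrast $M \otimes_R \widehat{R} = k[t,t^{-1}] \otimes_{k[t]} k[\![t]\!] = k(\!(t)\!) = M_1$, as it should be.

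There is a second, smaller gap: your short exact sequence
\[
0 \to M \to \widehat{M} \oplus M[1/f] \to \widehat{M}[1/f] \to 0
\]
is asserted via ``$M = \widehat{M} \cap M[1/f]$'', but the map $M \to \widehat{M}$ need not be injective (its kernel is $\bigcap_n f^n M$, which can be nonzero even when $f$ is regular on $M$; take $M = R[1/f]$). The correct sequence has $M \otimes_R \widehat{R}$ in place of $\widehat{M}$, and then injectivity on the left and surjectivity on the right are exactly the nontrivial content of the theorem --- in particular one must show that $f$ remains a non-zerodivisor on $M \otimes_R \widehat{R}$, which is the heart of the original Beauville--Laszlo argument.
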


For any $S = \Spa(R,R^+) \in \Perf_{\Q_p}$, an $S$-point of $\Gr_{\GL_n}$ corresponds to an untilt $S^\sharp = \Spa(R^\sharp, R^{\sharp +})$, a rank $n$ vector bundle $\cF$ on $\Spec(B_{\dR}^+(R^\sharp))$ and a trivialization of $\cF$ over $B_{\dR}(R^\sharp)$. By Beauville-Laszlo, we can glue the trivial rank $n$ vector bundle on $X_S^{\alg} \backslash S^{\sharp,\alg}$ with the vector bundle $\cF$ over $B_{\dR}^+(R^\sharp)$ to obtain a new rank $n$ vector bundle on $X_S^{\alg}$, which by GAGA corresponds to a rank $n$ vector bundle on $X_S$. This is a modification of the trivial rank $n$ vector bundle on $X_S$ at the point (corresponding to) $S^\sharp$.

\subsection{The stack of $G$-bundles}
In this subsection we denote by $X$ either a scheme over $\Q_p$ or a sousperfectoid space over $\Spa\Q_p$. By $\Rep(G)$ (resp. $\Bun(X)$) we denote the exact symmetric monoidal category of finite-dimensional algebraic representations of $G$ over $\Q_p$ (resp. of vector bundles on $X$).

By a $G$-bundle on $X$ we mean an étale sheaf on $X$ together with an action by $G$ that is $G$-equivariantly isomorphic to the trivial $G$-torsor étale locally on $X$. From the Tannakian point of view (see \cite[19.5.1, 19.5.2]{SW20}) this is equivalent to giving an exact tensor functor
\[\Rep(G) \to \Bun(X)\]

By \cite[Proof of 11.2.1]{SW20}, for any perfectoid $S \in \Perf_{\Q_p}$ its relative Fargues-Fontaine curve $X_S$ is sousperfectoid. Thus we can talk about $G$-torsors on $X_S$ in terms of exact tensor functors. Since we also have the exact tensor equivalence $\Bun(X_S) \iso \Bun(X_S^{\alg})$, the categories of $G$-torsors on $X_S$ and $X_S^{\alg}$ are equivalent.

\begin{prop}[\cite{FS21}, II.2.1, III.1.2, III.1.3]
    The pre-stack on $\Perf_{\Q_p}$ that maps a perfectoid space $S \in \Perf_{\Q_p}$ to the groupoid of $G$-torsors on $X_S$ is a small v-stack.
\end{prop}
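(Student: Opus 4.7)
The proof splits naturally into two tasks: verifying v-descent (the stack axioms) and verifying smallness (existence of a groupoid presentation by perfectoid spaces). My plan is to handle each in turn, with the descent step doing the bulk of the work.

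For v-descent I would exploit the Tannakian description recalled just above the proposition: a $G$-torsor on $X_S$ is the same datum as an exact tensor functor $\Rep(G) \to \Bun(X_S)$, so descent for $G$-torsors reduces to v-descent for the functor $S \mapsto \Bun(X_S)$, with the tensor-compatibility of the descent being automatic. Given a v-cover $S' \to S$ in $\Perf_{\Q_p}$, I would cover $X_S$ by the sousperfectoid rational affinoids $\Spa(R_n,R_n^+)$ constructed in the excerpt; since vector bundles on these correspond to finite projective $R_n$-modules, the problem reduces to v-descent of finite projective modules over the sousperfectoid rings $R_n$. This can be deduced from v-descent of finite projective modules over integral perfectoid rings, established in \cite{Sch22} and \cite{SW20}, by the standard trick of covering $R_n$ by a faithfully flat perfectoid ring $\tilde{R}_n$ with respect to which $R_n$ is a direct summand, transferring the descent along $R_n \to \tilde{R}_n$.

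For smallness I would exhibit a presentation by a perfectoid space via the Beauville-Laszlo uniformization. Writing the stack as a disjoint union $\coprod_{b \in B(G)} \Bun_G^b$ of Newton strata, each stratum $\Bun_G^b$ is of the form $[\ast/\tilde{G}_b]$ for a small v-sheaf group $\tilde{G}_b$ parametrizing automorphisms of the associated $G$-isocrystal (after base change to $\Spd$ of the completion of the maximal unramified extension of $\Q_p$). Taking the disjoint union of the canonical pointed presentations yields a surjection $Y \twoheadrightarrow \Bun_G$ from a perfectoid space whose fiber product $Y \times_{\Bun_G} Y$ decomposes as $\coprod_b \tilde{G}_b$, which is visibly a small v-sheaf. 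Alternatively, one may argue more directly by taking the disjoint union of the Beauville-Laszlo maps $BL \colon \Gr_{G,\mu} \to \Bun_G$ over all dominant $\mu$, noting that the result is jointly v-surjective, and inheriting smallness from the $B_{\dR}^+$-affine Grassmannian itself.

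The main obstacle is the v-descent step for vector bundles on $X_S$: the structure sheaf $\O_{X_S}$ is not literally v-local, so descent has to be routed through the explicit sousperfectoid charts $R_n$ together with the observation made in the excerpt that these rings are independent of the choice of integral subring $R^+$ and hence behave well under v-covers. Once this descent is in place, the passage to $G$-torsors via the Tannakian formalism and the verification of smallness from the Newton stratification are essentially formal; for this reason one expects the proof, as in \cite{FS21}, to be largely a reduction to known descent theorems for modules over perfectoid rings.
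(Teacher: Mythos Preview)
The paper does not give its own proof of this proposition: it is stated with a bare citation to \cite{FS21}, II.2.1, III.1.2, III.1.3, and no proof environment follows. So there is nothing in the paper to compare your proposal against.

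For what it is worth, your outline is broadly in the spirit of the arguments in \cite{FS21}: v-descent for $\Bun_G$ is indeed reduced there (via the Tannakian viewpoint) to v-descent for vector bundles on the curve, which in turn is established using the sousperfectoid charts on $Y_S$; and smallness is obtained from a surjection out of a small v-sheaf together with control of the diagonal. One caution: your suggested mechanism of ``covering $R_n$ by a faithfully flat perfectoid $\tilde{R}_n$ with $R_n$ a direct summand'' is not literally how \cite{FS21} proceeds, and would need justification; the actual argument there goes through descent along the Frobenius-compatible charts and uses that $\O_{Y_S}$ is a v-sheaf on the diamond side. But since the paper under review simply defers to \cite{FS21}, none of this bears on the comparison you were asked to make.
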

This v-stack, denoted by $\Bun_G$, is called the \textbf{stack of $G$-bundles on the Fargues-Fontaine curve}.

The interpretation of $\Gr_{\GL_n}$ as classifying modifications of the trivial rank $n$ vector bundle of on the Fargues-Fontaine curve can be generalized to $G$-torsors, as we will explain next.

Let $S = \Spa(R,R^+) \in \Perf_{\Q_p}$ be a perfectoid space with untilt $S^\sharp$ over $\Q_p$, which we view as a Cartier divisor on $X_S$. By Beauville-Laszlo, there is an exact symmetric monoidal equivalence
\[\Bun(X_S) \iso \Bun(X_S^{\alg}) \iso \Bun(X_S^{\alg} \backslash S^{\sharp,\alg}) \times_{\Bun(\Spec(B_{\dR}(R^\sharp)))} \Bun(\Spf(B_{\dR}^+(R^\sharp)))\]
Hence there is an equivalence between the category of exact tensor functors from $\Rep(G)$ to the left-hand side and to the right-hand side. 

Now assume we are given an $S$-valued point  $(\cF,\alpha)$ of $\Gr_G$, where $\cF$ is a $G$-torsor over $B_{\dR}^+(R^\sharp)$ and $\alpha$ is a trivialization of $\cF$ over $B_{\dR}(R^\sharp)$. By the above exact symmetric monoidal equivalence, we can glue $\cF$ and the trivial $G$-torsor along the trivialization $\alpha$ to obtain a new $G$-torsor on $X_S^{\alg}$, or equivalently on $X_S$.

This construction yields the \textbf{Beauville-Laszlo uniformization morphism}
\[BL: \Gr_G \to \Bun_G.\]

\begin{prop}[\cite{FS21}, III.3.1]
    $BL$ is surjective as a map of pro-étale stacks.
\end{prop}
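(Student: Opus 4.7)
The plan is to reduce pro-étale surjectivity to a pointwise statement at geometric points, and then to promote it by a spreading-out argument. Recall from the Beauville-Laszlo description of $\Bun(X_S)$ that an $S$-point of $\Gr_G$ mapping under $BL$ to a given $G$-bundle $\mathcal{E}$ on $X_S$ is precisely the datum of an untilt $S^\sharp$ of $S$ over $\Q_p$ together with a trivialization of $\mathcal{E}|_{X_S \setminus S^\sharp}$. Consequently, what needs to be shown is that for any affinoid perfectoid $S \in \Perf_{\Q_p}$ and any $G$-bundle $\mathcal{E}$ on $X_S$, there exists a pro-étale cover $S' \to S$, an untilt $S'^\sharp$, and a trivialization of $\mathcal{E}|_{X_{S'} \setminus S'^\sharp}$.

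First I would check the pointwise version at geometric points $s = \Spa(C,\O_C) \to S$. By the classification theorem of Fargues for vector bundles on the Fargues-Fontaine curve and its extension to general reductive groups, every $G$-bundle on $X_s$ is isomorphic to one coming from a $G$-isocrystal, and in particular becomes trivial after restriction to the complement of any chosen closed Cartier divisor on $X_s$. Since closed Cartier divisors on $X_s$ correspond to Frobenius orbits of untilts of $s$, one obtains an untilt $C^\sharp$ of $C$ together with a trivialization of $\mathcal{E}|_{X_s \setminus s^\sharp}$; this is exactly an $s$-valued point of $\Gr_G$ mapping to the class of $\mathcal{E}|_{X_s}$ under $BL$.

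Next I would spread the pointwise data out to a pro-étale neighborhood of $s$ in $S$. The untilt $s^\sharp$ extends, pro-étale locally on $S$, to an untilt of a strictly totally disconnected cover $S' \to S$, using that $\Spd\Q_p$ is pro-étale locally nonempty over $\Spa \Q_p$-points and that v-covers refine to pro-étale covers over strictly totally disconnected bases. Once the untilt has been spread out, the sheaf of trivializations of $\mathcal{E}|_{X_{S'} \setminus S'^\sharp}$ is a $G$-torsor on $S'$; since $G$-torsors for a reductive $G$ are pro-étale locally trivial, the existing trivialization at $s$ extends to a further pro-étale refinement of $S'$, producing the desired lift through $BL$.

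The main obstacle in this strategy is the pointwise classification step, that is, the assertion that every $G$-bundle on the Fargues-Fontaine curve over a geometric point is a modification of the trivial $G$-bundle at some untilt point. For $G = \GL_n$ this is Fargues' theorem expressing every vector bundle as a successive extension of line bundles $\mathcal{O}(\lambda)$, each of which is visibly a modification of the trivial bundle. For general reductive $G$, one must invoke the extension of this result via an analysis of the Kottwitz set $B(G)$ together with reductions to Levi subgroups; this is the nontrivial geometric input, and once it is in hand, the spreading-out step reduces to formal manipulations with pro-étale torsors.
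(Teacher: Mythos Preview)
The paper does not prove this proposition; it simply records the statement with a citation to \cite[III.3.1]{FS21}. So there is no proof in the paper to compare your proposal against. That said, let me comment on your approach.

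Your overall strategy---verify surjectivity at geometric points via the classification of $G$-bundles, then spread out---is the correct shape, and it is essentially how Fargues--Scholze argue. But there are two genuine gaps in your write-up.

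First, the pointwise step. You assert that a $G$-bundle coming from an isocrystal ``becomes trivial after restriction to the complement of any chosen closed Cartier divisor on $X_s$.'' For $G=\GL_n$ this is immediate because the complement is $\Spec B_e$ with $B_e$ a principal ideal domain, so every vector bundle there is free. For general reductive $G$ you need the vanishing of $H^1_{\text{\'et}}(\Spec B_e, G)$, which is a separate nontrivial input (in \cite{FS21} this is handled via the classification through $B(G)$ together with reduction steps to Levi subgroups and $z$-extensions). Your final paragraph gestures at this, but it is the actual content of the proof, not a preliminary obstacle.

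Second, and more seriously, your spreading-out step is not correct as written. You claim that ``the sheaf of trivializations of $\mathcal{E}|_{X_{S'} \setminus S'^\sharp}$ is a $G$-torsor on $S'$.'' It is not: it is a $G$-torsor on $X_{S'} \setminus S'^\sharp$, which is a very different space. Triviality of $G$-torsors on $S'$ pro-\'etale locally says nothing about triviality on the punctured relative curve. The way this is handled in practice is not by spreading out a trivialization directly, but by using that for surjectivity of a map of small v-stacks like $BL$, it suffices to check surjectivity on geometric points (because strictly totally disconnected perfectoid spaces form a basis for the v-topology, and over such a base one can argue connected-component-wise). The ind-proper structure of $\Gr_G$ then lets one upgrade the conclusion. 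Your attempt to spread the untilt out (``$\Spd\Q_p$ is pro-\'etale locally nonempty over $\Spa\Q_p$-points'') is also garbled: $\Spd\Q_p \to \ast$ is not pro-\'etale, and the untilt is part of the data of a point in $\Gr_G$, not something to be constructed separately.
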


\subsection{Explicit computations}\label{tors_vbs}
We now return to our initial PEL setup and consider the v-stacks $\Bun_{G^*}$ and $\Bun_G$ (we are abusing notation as before and speak of $G$-torsors when we should speak of $G_{\Q_p}$-torsors, similarly for $G^*$). We can use the interpretation of $G^*$ and $G$ as groups of certain symplectic similitudes, together with the the Tannakian formalism, to describe torsors on the Fargues-Fontaine curve with respect to $G^*$ and $G$ in terms of vector bundles.

Namely, given a $G^*$-torsor on $X_S$, we can interpret it as a vector bundle $\ccE$ on $X_S$ with an $\O_B$-module structure and a symplectic pairing $(\cdot,\cdot)_\ccE$ that is $\Q_p$-bilinear, $\Q_p$-valued and that satisfies
\[(ax,y)_\ccE = (x,ay)_\ccE\]
for any local sections $x,y \in \ccE$ and $a \in F_p$, i.e. by Lemma \ref{forms_trace} it is liftable along the trace $\Tr_{F_p/\Q_p}$ to an $F_p$-bilinear, $F_p$-valued symplectic pairing $\langle\cdot,\cdot\rangle_\ccE$. Morphisms of $G^*$-torsors are then morphisms of vector bundles that are compatible with the $\O_B$-module structure and that are $\Q_p$-symplectic similitudes. 

Similarly, given a $G$-torsor on $X_S$, we can interpret it as a vector bundle $\ccE$ on $X_S$ with an $\O_B$-module structure and a symplectic pairing $\langle\cdot,\cdot\rangle_\ccE$ that is $F_p$-bilinear and $F_p$-valued. Morphisms between $G$-torsors are then morphisms between vector bundles that are compatible with the $\O_B$-module structure and that are $F_p$-symplectic similitudes. 

In both cases, we obtain the vector bundle from the associated torsor by pushforwarding it along the canonical faithful representation $G^* \hookrightarrow G \hookrightarrow \GL_\Q(V)$. To get the torsor back from the vector bundle, we can consider isomorphism sheaves $\underline{\Isom}_{G^*}(-,\Lambda \otimes_{\Z_p} \O_{X_S})$ resp. $\underline{\Isom}_G(-,\Lambda \otimes_{\Z_p} \O_{X_S})$. 

We can pushforward $G^*$-torsors on $X_S$ along the embedding $G^* \hookrightarrow G$ to obtain $G$-torsors. In terms of vector bundles, this corresponds precisely to lifting the pairing $(\cdot,\cdot)_\ccE$ along the trace $\Tr_{F_p/\Q_p}$ to the pairing $\langle\cdot,\cdot\rangle_\ccE$. Concretely, given a $G^*$-torsor $\underline{\Isom}_{G^*}(\ccE,\Lambda \otimes \O_{X_S})$, it is easy to see that 
\[\underline{\Isom}_G(\ccE,\Lambda \otimes_{\Z_p} \O_{X_S}) = \underline{\Isom}_{G^*}(\ccE,\Lambda \otimes_{\Z_p} \O_{X_S}) \times^{G^*} G.\]

One word of warning: keeping Lemma \ref{forms_trace} in mind, it might seem natural to also go in the other direction by composing $\langle\cdot,\cdot\rangle_{\ccE}$ with the trace $\Tr_{F_p/\Q_p}$ to get a $G^*$-torsor back. However, this will of course not define a map $\Bun_G \to \Bun_{G^*}$, because $F_p$-symplectic similitudes with respect to the lifted pairings do not induce $\Q_p$-symplectic similitudes after composing with $\Tr_{F_p/\Q_p}$. 

\section{Construction of the Igusa stack}\label{igs_chapter}
After having discussed all the necessary background, we are now in a position to prove the fiber product conjecture on the good reduction locus of our abelian-type Shimura varieties.

\subsection{Hybrid Igusa stack}
In this subsection we define the hybrid Igusa stack for varying parameter $\alpha \in \A_{F,f}^{p,\times}$, and show that it yields a similar fiber product diagram as the one we want for our abelian-type Shimura varieties.

In the definition of hybrid Igusa stacks, we have drawn inspiration from the definition of Hodge-type Igusa stacks in \cite{DHKZ24} and also from the description of Igusa varieties in the setting of Hilbert modular varieties, see \cite[4.2.2]{CT23}.

We have to make the following auxiliary choice: for any prime $\P$ of $\O_F$ lying above $p$, we fix an element $x_\P \in \O_{F,+}$ such that $v_\P(x_\P) = 1$ and such that for any $\P' \neq \P$ lying above $p$, $v_{\P'}(x_\P) = 0$, i.e. $x_\P$ is a totally positive uniformizer for $\P$ that is a unit with respect to any other prime $\P'$ lying above $\P$. 

We will not include this choice in our notation, although the construction of the hybrid Igusa stack seems to depend on it. However, the abelian-type Igusa stack that we will construct out of the hybrid one will not depend on this choice (up to unique isomorphims), because of the main results of \cite{Kim25}.

\begin{definition}\label{hybrid_igs}
    Fix $\alpha \in \A_{F,f}^{p,\times}$. We define a presheaf of groupoids $\Igs_{\alpha}^{\pre}: \Perf \to \Grpd$ by defining the groupoid $\Igs_{\alpha}^{\pre}(S)$ for any affinoid perfectoid $S = \Spa(R,R^+) \in \Perf$ as follows.

    Its objects are pairs $(S^\sharp,x)$, where $S^\sharp = \Spa(R^\sharp,R^{\sharp+})$ is an untilt of $S$ over $\O_E$, and $x: \Spf R^{\sharp+} \to \X_\alpha$ is a map of formal schemes over $\O_E$, which corresponds to an abelian scheme $\fA_x$ with $G$-structure and infinite away-from-$p$ level structure over $\Spf R^{\sharp+}$.

    A morphism $f: (S^{\sharp_1},x) \to (S^{\sharp_2},y)$ is a quasi-isogeny $\fA_x \times_{R^{\sharp_1+}} R^+/\varpi \to \fA_y \times_{R^{\sharp_2+}} R^+/\varpi$ that respects the infinite level structure away from $p$, preserves the $\O_B$-structure and that preserves the polarizations up to a scalar in the $\Spec R^+/\varpi$-points of the locally constant sheaf $\underline{\Z_{(p)}^\times \prod_{\P|p} x_\P^\Z}$.
\end{definition}
In order to see that our definition of morphisms is well-defined, we refer to Remark \ref{multiple_untilts}. Namely, we may choose a small enough pseudo-uniformizer $\varpi$ such that $R^{\sharp_i+}$ surjects onto $R^+/\varpi$ for $i=1,2$. By Serre-Tate lifting,  this does not depend on the choice of pseudo-uniformizer $\varpi$.

As before, we will drop the parameter $\alpha$ from our notation if it is understood from context or does not matter. 

The following result is the exact same as in the Hodge-type setting.
\begin{lemma}[\cite{DHKZ24}, 5.2.6]\label{igs_sheaf}
    $\Igs^{\pre}$ is a $0$-truncated presheaf of groupoids, i.e. objects in the groupoid $\Igs^{\pre}(S)$ do not have non-trivial automorphisms for any $S = \Spa(R,R^+) \in \Perf$
\end{lemma}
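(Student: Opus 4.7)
The argument is essentially the one in \cite[5.2.6]{DHKZ24}. The plan is to show that for any $S = \Spa(R, R^+) \in \Perf$ and any object $(S^\sharp, x) \in \Igs^{\pre}(S)$, the only self-morphism of $(S^\sharp, x)$ is the identity. Unwinding Definition \ref{hybrid_igs}, such a self-morphism is a quasi-isogeny
\[ f \colon A := \fA_x \times_{R^{\sharp+}} R^+/\varpi \longrightarrow A \]
commuting with the $\O_B$-action, preserving the infinite away-from-$p$ level structure $\overline{\eta}$, and satisfying $f^\vee \circ \lambda \circ f = c \cdot \lambda$ for some local section $c$ of $\underline{\Z_{(p)}^\times \prod_{\P \mid p} x_\P^\Z}$.

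The decisive step is that compatibility with the infinite level structure rigidifies $f$: the identity $\eta \circ f_* = \eta$ as trivializations of $\underline{H_1}(A, \A_f^p)$ forces $f$ to act as the identity on $V_\ell A := T_\ell A \otimes_{\Z_\ell} \Q_\ell$ for every prime $\ell \neq p$. Consequently $f - \id \in \underline{\End}(A) \otimes_\Z \Q$ lies in the kernel of the action on $V_\ell A$.

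To conclude that $f = \id$, we invoke the faithfulness of the action of quasi-endomorphisms on the $\ell$-adic Tate module. This may be checked on geometric points $\bar s = \Spa(C, \O_C) \to S$: the pullback $A_{\bar s}$ is an abelian scheme over $\O_C/\varpi$, and further specialization to the residue field $k = \O_C/\m_C$ reduces the claim to Mumford's classical theorem that $\End(A_0) \otimes_\Z \Q_\ell \hookrightarrow \End(V_\ell A_0)$ is injective for any abelian variety $A_0$ over an algebraically closed field of characteristic different from $\ell$. Rigidity of quasi-isogenies along $\O_C/\varpi \twoheadrightarrow k$ lifts the identity from $A_{0,k}$ back to $A_{\bar s}$, and separatedness of $\underline{\Hom}(A, A)$ propagates pointwise triviality to global triviality on $S$.

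The conditions of $\O_B$-linearity and of the particular polarization scalar play no role in this argument---the rigidity is entirely due to the infinite away-from-$p$ level structure, so the larger polarization scalar here (compared to $\Z_{(p)}^\times$ in the Hodge-type setting) causes no additional difficulty. The one point that requires care is the propagation from the residue field of each geometric point back to $S$ itself, but this is routine given the separatedness of the relevant $\Hom$-sheaf; I do not expect any real obstacle.
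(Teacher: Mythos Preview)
Your approach is correct and close in spirit to the paper's: both reduce to the injectivity of $\End(A)\otimes\Q \to \End(V^pA)$ and then appeal to the classical result over a field together with rigidity of abelian schemes. The paper organizes the reduction differently, by spreading out---writing $R^+/\varpi$ as a filtered colimit of finitely generated $\Z$-algebras, descending $A$ and the endomorphism $f-\id$ to some finite-type base $R_i$, and checking vanishing on one geometric fiber per connected component of $\Spec R_i$---whereas you pass through geometric points of the perfectoid $S$ and then specialize to the residue field.

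One small point worth tightening: in your final propagation step, ``separatedness of $\underline{\Hom}(A,A)$'' is not quite the right justification, since $R^+/\varpi$ need not be reduced. What you actually need is rigidity once more (an endomorphism of an abelian scheme vanishing on one geometric fiber of each connected component vanishes globally), together with the identification $\pi_0(\Spa(R,R^+)) = \pi_0(\Spec R^+/\varpi)$ ensuring that geometric points of $S$ meet every connected component. The paper's spreading-out route avoids having to make this link between the adic and schematic connected components.
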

\begin{proof}
    For convenience, we recall the proof. It suffices to prove that for any $\Z_p$-algebra $R$ and any abelian scheme $\pi: A \to \Spec R$ the natural map
    \[\End_R(A) \otimes_\Z \Q \to \End_R(V^pA)\]
    is injective, where $V^pA$ denotes the prime-to-$p$ adelic Tate module of $A$. Actually, it suffices to prove that the natural map
    \[\End_R(A) \to \End_R(T^pA)\]
    is injective, where $T^pA$ denotes the prime-to-$p$ Tate module of $A$ over $R$. Let $f\in \End_R(A)$ be in the kernel of this map.

    Write $R = \dirlim R_i$ as the filtered colimit of finitely generated $\Z$-algebras $\{R_i\}_{i \in I}$ over some indexing set $I$. By spreading out, there is some index $i \in I$, an abelian scheme $\pi_i: A_i \to \Spec R_i$ and an endomorphism $f_i \in \End_{R_i}(A_i)$ such that base changing $(A_i,f_i)$ to $R$ gives us back $(A,R)$. Since $|\Spec R | = \invlim |\Spec R_i|$, we may also assume without loss of generality that the image of $\Spec R \to \Spec R_i$ meets all of the finitely many connected compoentns of $\Spec R_i$. We need to see that $f_i = 0$.

    The result is well-known over algebraically closed fields, thus for every geometric point $s: \Spec k \to \Spec R$, we find that $f_s = 0$. By our assumption on the map $\Spec R \to \Spec R_i$, we find for any connected component of $\Spec R_i$ a geometric point $s: \Spec k \to \Spec R_i$ such that $f_{i,s} = 0$. By rigidity of abelian schemes, this already implies $f_i = 0$.
\end{proof}

\begin{definition}
    The hybrid Igusa stack $\Igs$ at infinite level is defined to be the v-sheafification of the presheaf that maps $S$ to the set of isomorphism classes of the groupoid $\Igs^{\pre}(S)$.
\end{definition}

\begin{lemma}
    There is a map of v-sheaves $q: \cX^\circ \to \Igs$.
\end{lemma}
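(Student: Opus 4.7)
The plan is to first construct a morphism on the level of presheaves by forgetting the trivialization of the Tate module, and then extend to $\cX^\circ$ via the universal property of sheafification.

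By Corollary \ref{grlocus_moduli_cor}, $\cX^\circ$ is the analytic sheafification of the presheaf $\cX^{\circ,\pre}$ on $\Perf$ whose $S$-sections consist of triples $(S^\sharp, y, \beta)$, where $S^\sharp = \Spa(R^\sharp, R^{\sharp+})$ is an untilt of $S$ over $\Spa E$, $y \colon \Spf R^{\sharp+} \to \X$ is a map of formal schemes, and $\beta$ is a trivialization of the Tate module of the generic fiber. Given such a triple, I would forget $\beta$ to produce a pair $(S^\sharp, y)$, which is an object of the groupoid $\Igs^{\pre}(S)$ of Definition \ref{hybrid_igs}. Composing with the canonical map from $\Igs^{\pre}(S)$ to its set of isomorphism classes, and then with the map into the v-sheafification $\Igs(S)$, yields a well-defined element of $\Igs(S)$. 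The construction is patently functorial in $S$, since base change of perfectoid spaces commutes with the forgetful operation $(S^\sharp, y, \beta) \mapsto (S^\sharp, y)$, so one obtains a morphism of presheaves of sets
\[
q^{\pre} \colon \cX^{\circ,\pre} \longrightarrow \Igs.
\]

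Since every analytic cover is a v-cover, the v-sheaf $\Igs$ is in particular an analytic sheaf. The universal property of analytic sheafification then produces a unique factorization
\[
q \colon \cX^\circ \longrightarrow \Igs,
\]
which is the desired morphism.

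The only point that deserves a sentence of verification, and which constitutes the (mild) main subtlety, is the compatibility of the two conventions for untilts: in $\cX^{\circ,\pre}$ the untilt $S^\sharp$ is required to lie over $\Spa E$, whereas in $\Igs^{\pre}$ it only needs to lie over $\O_E$. The former is the stronger condition, since if $R^\sharp$ is an $E$-algebra then $R^{\sharp+}$ is automatically an $\O_E$-algebra and $\Spf R^{\sharp+}$ is a formal $\O_E$-scheme, so the map $y$ recorded by a section of $\cX^{\circ,\pre}(S)$ directly plays the role of the map $x$ demanded in $\Igs^{\pre}(S)$. No genuine obstacle arises; the substantive content, namely that the induced square is Cartesian, is deferred to the subsequent arguments.
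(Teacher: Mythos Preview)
Your proof is correct and follows essentially the same approach as the paper: construct the map on presheaves by forgetting the trivialization $\beta$ from the moduli description of $\cX^{\circ,\pre}$ in Corollary \ref{grlocus_moduli_cor}, then sheafify. Your additional remarks on the universal property of analytic sheafification and the compatibility of the untilt conventions over $E$ versus $\O_E$ are accurate but not strictly necessary for this short argument.
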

\begin{proof}
    It suffices to construct a map of presheaves $\cX^{\circ,\pre} \to \Igs^{\pre}$ and then sheafify. Recall from Corollary \ref{grlocus_moduli_cor} that $\Spa(R,R^+)$-points of $\cX^{\circ,\pre}$ correspond to triples $(S^{\sharp},y,\beta)$, where $S^{\sharp}$ is an untilt of $S$, $y: \Spf R^{\sharp+} \to \X$ is a map of formal schemes, and $\beta$ is a trivialization of the Tate module. Forgetting about $\beta$ yields an obvious map to $\Igs^{\pre}$.
\end{proof}

The map $\overline{\pi}_{HT}: \Igs \to \Bun_G$ can be constructed analogously as in \cite{Zha23}.

\begin{prop}
    There is a map $\overline{\pi}_{HT}: \Igs \to \Bun_G$.
\end{prop}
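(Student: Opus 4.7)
The plan is to define the map first on the presheaf $\Igs^{\pre}$ and then use that $\Bun_G$ is a v-stack to factor through the v-sheafification $\Igs$. For $(S^\sharp, x) \in \Igs^{\pre}(S)$ with $S = \Spa(R,R^+)$, I would proceed in three stages. First, reduce the formal abelian scheme $\fA_x / \Spf R^{\sharp+}$ modulo a sufficiently small pseudo-uniformizer $\varpi$ (invoking Remark \ref{multiple_untilts}) to obtain an abelian scheme $A_x$ over $\Spec R^+/\varpi$. Second, since $R^+/\varpi$ is quasi-regular semiperfectoid of characteristic $p$, take the covariant prismatic Dieudonné module of $A_x[p^\infty]$ and evaluate at the prism $(A_{\cris}(R^+/\varpi),(p))$; this recovers the rational crystalline Dieudonné module after inverting $p$, producing a $\varphi$-module over $B_{\cris,S}^+$. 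Third, apply the standard construction (following \cite{Zha23}, via the realization of $X_S^{\alg}$ as $\Proj P$ with $P = \bigoplus_d (B_{\cris,S}^+)^{\varphi = p^d}$) to convert this $\varphi$-module into a vector bundle $\ccE_x$ on the Fargues-Fontaine curve $X_S$.

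The $\O_B$-action on $\fA_x$ induces an $\O_B$-module structure on $\ccE_x$, and the $\Z_{(p)}$-polarization $\lambda_x$ yields, via the Weil pairing on Tate modules transported through Dieudonné theory, a $\Q_p$-bilinear symplectic pairing on $\ccE_x$ satisfying $(bv,v') = (v,b^*v')$ for $b \in B$. By Lemma \ref{forms_trace}, this pairing lifts uniquely to an $F_p$-bilinear symplectic pairing on $\ccE_x$, and according to the Tannakian description of Subsection 7.3 this data is precisely a $G$-torsor on $X_S$.

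For functoriality on morphisms, a quasi-isogeny $f: A_x \to A_y$ over $R^+/\varpi$ as in Definition \ref{hybrid_igs} induces an isomorphism on rational Dieudonné modules (since quasi-isogenies become isomorphisms after inverting $p$) and hence an isomorphism $\ccE_x \overset{\sim}{\to} \ccE_y$. Compatibility with the $\O_B$-action is automatic, and the fact that $f$ preserves polarizations up to a section of $\underline{\Z_{(p)}^\times \prod_{\P|p} x_\P^\Z}$, which embeds naturally into $\underline{F_p^\times}$, ensures that the induced map is a genuine $F_p$-symplectic similitude, hence an isomorphism of $G$-torsors. Crucially, the construction depends only on the reduction modulo $\varpi$, not on the particular untilt structure that produced it, so it respects the morphisms of $\Igs^{\pre}$ which are allowed to bridge two distinct untilts.

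The main obstacle is verifying that the scalar group $\Z_{(p)}^\times \prod_{\P|p} x_\P^\Z$ is precisely what is needed to make this functorial: the factors $x_\P^\Z$ absorb the $\P$-adic denominators arising from quasi-isogenies of $p$-divisible groups (which show up as $\P$-adic valuations in the similitude factor on the vector bundle side), while $\Z_{(p)}^\times$ handles the prime-to-$p$ scalar ambiguity inherent in the moduli of abelian schemes up to prime-to-$p$ quasi-isogeny. Once the map $\Igs^{\pre} \to \Bun_G$ is defined, v-sheafification in the source (with $\Bun_G$ already being a v-stack by \cite{FS21}) yields the desired morphism $\overline{\pi}_{HT}: \Igs \to \Bun_G$.
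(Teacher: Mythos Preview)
Your proposal follows the same route as the paper: define the map on $\Igs^{\pre}$, reduce the formal abelian scheme modulo a small $\varpi$, take the rational crystalline Dieudonné module of the resulting $p$-divisible group, convert it to a vector bundle $\ccE_x$ on $X_S$ via the graded-module construction, and read off a $G$-torsor; morphisms go through because quasi-isogenies become isomorphisms on rational Dieudonné modules. Your handling of the similitude factor on morphisms is in fact slightly more explicit than the paper's, correctly noting that $\Z_{(p)}^\times \prod_{\P|p} x_\P^\Z \subset F_p^\times$ forces the induced map to be an $F_p$-symplectic similitude.

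There is one genuine gap. You claim that $\ccE_x$ with its $\O_B$-structure and lifted $F_p$-bilinear pairing ``is precisely a $G$-torsor on $X_S$'' by appeal to Subsection~\ref{tors_vbs}. But that subsection only records the dictionary: from a $G$-torsor one obtains such a vector bundle by pushforward along $G \hookrightarrow \GL(V)$, and conversely one forms the sheaf $\underline{\Isom}_G(\ccE_x,\Lambda \otimes_{\Z_p}\O_{X_S})$. It does \emph{not} assert that this sheaf is automatically a torsor; for that you must check that $\ccE_x$ is \'etale-locally isomorphic, with all its extra structure, to the standard model $\Lambda \otimes_{\Z_p} \O_{X_S}$. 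The paper supplies this verification: invoking \cite[8.6]{Zha23}, one has a short exact sequence
\[
0 \to T_p\cA \otimes \O_{X_S} \to \ccE(A) \to i_*\Lie(\cA) \to 0,
\]
and since $T_p\cA \otimes \O_{X_S}$ is \'etale-locally $\Lambda \otimes \O_{X_S}$ and the completion of $\ccE(A)$ at the divisor $S^\sharp$ is \'etale-locally standard (both as $\O_B \otimes \O_{X_S}$-modules with pairing), the isomorphism sheaf is \'etale-locally trivialized. Without this step your construction yields only a pseudo-torsor.
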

\begin{proof}
    Since $\Bun_G$ is a v-stack, it suffices to define a map of prestacks $\Igs^{\pre} \to \Bun_G$ and then sheafify. Let $S = \Spa(R,R^+)$, then we wish to define a functorial map of groupoids $\Igs^{\pre}(S) \to \Bun_G(S)$.

    Given an untilt $S^\sharp = \Spa(R^{\sharp}, R^{\sharp+})$ and a formal abelian scheme $\fA$ over $\Spf R^{\sharp+}$ with $G$-structure and infinite away-from-$p$ level structure, choose a sufficiently small pseudo-uniformizer $\varpi \in R^+$ such that we have a quotient map $R^{\sharp+} \twoheadrightarrow R^+/\varpi$. Consider the reduction $A := \fA \times_{R^{\sharp+}} R^+/\varpi$, which is still equipped with a $G$-structure, as is its $p$-divisible group $A[p^\infty]$. We may attach to it the rational crystalline Dieudonné module $M[p^{-1}] := M_{crys}(A[p^\infty])[p^{-1}]$, which is a finite locally free $B_{\cris}^+(R^+/\varpi)$-module that is well-defined up to unique isomorphism.

    By fully faithfulness of the crystalline Dieudonné module functor, the module $M[p^{-1}]$ comes equipped with an $\O_B$-module structure and with a symplectic $F_p$-bilinear pairing. Using GAGA for the Fargues-Fontaine curve $X_S$, the graded module
    \[\bigoplus_{d \geq 0} M[p^{-1}]^{\varphi = p^{d+1}}\]
    defines a vector bundle $\ccE(A)$ on $X_S$, which also comes equipped with an $\O_B$-module strcutreu and with a symplectic $F_p$-valued pairing. Consider the isomorphism sheaf on the Fargues-Fontaine curve
    \[\underline{\Isom}_G(\ccE(A),\Lambda \otimes_{\Z_P} \O_{X_S})\]
    of $\O_B$-linear trivializations of $\ccE(A)$ that are $F_p$-symplectic similitudes with respect to the symplectic pairing on $\ccE(A)$ and $(\cdot,\cdot)_F$ on $\Lambda$. We claim that this sheaf is a $G$-torsor and thus yields an object in $\Bun_G(S)$

    To prove the claim, note that by \cite[8.6]{Zha23} the vector bundle $\ccE(A)$ is sitting in the short exact sequence
    \[0 \to T_p\cA \otimes \O_{X_S} \to \ccE(A) \to i_* \Lie(\cA) \to 0\]
    where $\cA$ denotes the adic generic fiber of $\fA$, $T_p\cA$ denotes its Tate module, and $i: S^{\sharp} \to X_S$ is the closed immersion of the Cartier divisor $S^{\sharp}$ of $X_S$. $T_p\cA \otimes \O_{X_S}$ is étale locally isomorphic to $\Lambda \otimes \O_{X_S}$, and the completion of $\ccE(A)$ at $S^{\sharp}$ is étale locally isomorphic to the completion of $\Lambda \otimes \O_{X_S}$ at $S^{\sharp}$, bot as $\O_B \otimes \O_{X_S}$ modules with symplectic pairings. This shows that the above isomorphism sheaf is trivialized étale locally.

    Lastly, any morphism in $\Igs^{\pre}(S)$ induced a quasi-isogeny $A_1[p^\infty] \to A_2[p^\infty]$ and thus an isomorphism $M_1[p^{-1}] \iso M_2[p^{-1}]$ of rational crystalline Dieudonné modules with $\O_B$-structure and symplectic pairing. This yields an isomorphism between the associated $G$-torsors, and everything is functorial in $S$.
\end{proof}

We have the following notion of tame Hecke action by $G(\A_f^p)$ on the system of hybrid Igusa stacks $\{\Igs_{\alpha}\}_{\alpha \in \A_{F,f}^{p,\times}}$. Namely, for $g \in G(\A_f^p)$ we have the isomorphism
\[[g]: \Igs_\alpha^{\pre} \to \Igs_{\nu(g)\alpha}^{\pre},\]
which is given by acting on the infinite away-from-$p$ level structure in the moduli description. We may sheafify to obtain an isomorphism
\[[g]: \Igs_{\alpha} \overset{\sim}{\to} \Igs_{\nu(g)\alpha}.\]
We can extend this action by letting $G(\Q_p)$ act trivially on $\Igs_\alpha$, in order to get a full Hecke action by $G(\A_f)$ on $\{\Igs_\alpha\}_{\alpha \in \A_{F,f}^{p,\times}}$.

By construction, the maps $\overline{\pi}_{HT}: \Igs_\alpha \to \Bun_G$ are Hecke invariant, and the following diagram is commutative on the nose;
\[\begin{tikzcd}
	{\cX_{\alpha}^\circ} & {\cX_{\nu(g)\alpha}^\circ} \\
	{\Igs_{\alpha}} & {\Igs_{\nu(g)\alpha}}
	\arrow["{[g]}", from=1-1, to=1-2]
	\arrow["q"', from=1-1, to=2-1]
	\arrow["q", from=1-2, to=2-2]
	\arrow["{[g]}"', from=2-1, to=2-2]
\end{tikzcd}\]

The proof of the desired fiber product for the hybrid space will closely follow the argument in \cite[section 8.2]{Zha23}. For this, we need the following two preparatory lemmas. 

\begin{lemma}[\cite{Zha23}, 8.10]\label{extension_lemma}
    Let $V$ be a valuation ring with fraction field $K$, let $A$ be an abelian scheme over $V$ with generic fiber $A_K$. Assume $j: G_K \hookrightarrow A_K$ is a finite sub-group scheme. Then there exists a finite sub-group scheme $G$ of $A$, flat over $V$, such that its generic fiber agrees with $G_K$.
\end{lemma}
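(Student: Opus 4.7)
The natural approach is to take $G$ to be the scheme-theoretic closure of $G_K$ inside $A$, and then to verify the three required properties (flatness, finiteness, subgroup scheme structure) separately. For flatness, I would use that over a valuation ring $V$, a module is flat if and only if it is $V$-torsion free. By construction, the structure sheaf $\mathcal{O}_G$ is the quotient of $\mathcal{O}_A$ by the kernel of the natural map $\mathcal{O}_A \to j_* \mathcal{O}_{G_K}$, where $j: G_K \hookrightarrow A$ is the composition of the open immersion $A_K \hookrightarrow A$ with the closed immersion $G_K \hookrightarrow A_K$; this kernel is precisely the $V$-torsion, so $\mathcal{O}_G$ is $V$-torsion free and $G$ is flat over $V$.

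For finiteness, let $n$ denote the order of the finite $K$-group scheme $G_K$, so that $G_K$ factors through the $n$-torsion subscheme $A_K[n]$. Since $A$ is an abelian scheme, multiplication by $n$ is a finite flat isogeny and $A[n]$ is a finite flat $V$-group scheme, sitting inside $A$ as a closed subscheme. The scheme-theoretic closure of $G_K$ in $A$ is therefore contained in $A[n]$, and as a closed subscheme of a finite $V$-scheme $G$ is itself finite over $V$.

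For the subgroup scheme structure, I would invoke the principle that a morphism from a $V$-flat scheme to $A$ which factors through a closed subscheme of $A$ on the generic fiber must factor through it globally; this follows from the universal property of the scheme-theoretic closure together with the fact that over a valuation ring the generic fiber of a flat scheme is scheme-theoretically dense. The multiplication $m: A \times_V A \to A$ restricts to a morphism $G \times_V G \to A$; on the generic fiber it factors through $G_K \subseteq G$ by hypothesis, and $G \times_V G$ is $V$-flat since both factors are. Hence $m|_{G \times_V G}$ factors through $G$, and the same reasoning handles inversion and the identity section. The only real subtlety is this last step, but once the scheme-theoretic closure formalism is in place the argument is essentially bookkeeping; there is no genuine obstacle.
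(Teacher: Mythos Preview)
Your proof is correct and follows the same skeleton as the paper's: take the scheme-theoretic closure $G$ of $G_K$ in $A$, deduce flatness from torsion-freeness over the valuation ring, and then establish finiteness and the group structure. The execution of the last two steps differs, however.

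For finiteness, you argue via the inclusion $G_K \subseteq A_K[n]$ and the fact that $A[n]$ is a finite flat closed subscheme of $A$, which forces the closure $G$ to sit inside $A[n]$ and hence be finite. The paper instead asserts directly that $\mathcal{O}_Z(Z)$ is finitely generated over $V$, without spelling out why; your argument is cleaner and more self-contained here.

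For the group scheme structure, you invoke the universal property of scheme-theoretic closure: any map from a $V$-flat scheme into $A$ that factors through $G_K$ generically must factor through $G$. The paper takes a more hands-on route: it uses finiteness of $\mathcal{O}_Z(Z)$ over $V$ (hence projectivity, since $V$ is a valuation ring) to split $\mathcal{O}_A \twoheadrightarrow \mathcal{O}_Z$ as $V$-modules, then defines $m_Z^*$ explicitly as a composite and checks it is a ring map by comparing with $m_{G_K}^*$ through the injection $\mathcal{O}_Z \hookrightarrow j_*\mathcal{O}_{G_K}$. Your approach is more conceptual and avoids the splitting; the paper's is more explicit but relies on the projectivity observation. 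Both are valid, and yours is arguably more transparent.
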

\begin{proof}
    We recall the proof. Since the map $j$ is quasi-compact, we can let $Z \subset A$ be the closed subscheme cut out by the quasi-coherent ideal $\cI := \ker(\O_A \to j_* \O_{G_K})$. Denote by $\O_Z$ the quotient $\O_A/\cI$, considered as an $\O_A$-module. Then we have
    \[\O_A \twoheadrightarrow \O_Z \hookrightarrow j_*\O_{G_K}.\]
    Since $j_*\O_{G_K}$ is torsion-free, $\O_Z$ is also torsion-free over the valuation ring $V$ and thus flat. 

    It now suffices to equip $Z$ with a group scheme structure that is compatible with the one on $A$. Denote by $m: A \times_{\Spec V}A \to A$ multiplication on $A$. To define multiplication on $Z$, note that $\O_Z(Z)$ is finitely generated over the valuation ring $V$, thus it is a finite projective $V$-module, which implies that the surjection $\O_A \twoheadrightarrow \O_Z$ splits as $V$-modules. We can now define the multiplication map $m_Z^*$ as the composition
    \[\O_Z \to \O_A \overset{m_A^*
    }{\to} m_*\O_{A \times A} \to m_*\O_{Z \times Z}.\]
    A priori this is only a morphism of $V$-modules, but in fact it is a homomorphism of $V$-algebras. This is because after postcomposition with the injection $\O_{Z \times Z} \hookrightarrow (j \times j)_*\O_{G_K \times G_K}$, the map $m_Z^*$ agrees with the map
    \[\O_Z \hookrightarrow j_*\O_{G_K} \overset{m^*}{\to} (j \times j)_*\O_{G_K \times G_K},\]
    which is a $V$-algebra homomorphism. This defines multiplication on $Z$, and it is also clearly the restriction of multiplication on $A$ to $Z$.

    Since the inverse map on $A$ preserves $G_K$, it also preserves its schematic image $Z$, and restricting it to $Z$ yields the inverse map on $Z$.

    Finally, the identity section $\Spec K \to G_K$ extends to a section $\Spec V \to Z$ by properness of $Z$ over $V$, which by uniqueness must be the same as the identity section on $A$.
\end{proof}

The second lemma is an adaptation of \cite[Lemma 8.11]{Zha23}.
\begin{lemma}\label{8.11 adaptation}
    Let $S$ be an $\O_E$-scheme and $\cA = (A,\iota_A,\lambda_A,\eta) \in \M(S)$. Let $\cH = (\cH/S,\iota_H,\lambda_H)$ be a $p$-divisible group with $G$-structure over $S$. Assume that
    \[\rho: \cA[p^\infty] \to \cH\]
    is an $\O_B$-linear isogeny preserving the polarizations up to a scalar in $\underline{F_p^\times}(S)$. Then $A' := A/\ker\rho$ can be uniquely promoted to an object $(A',\iota_A',\lambda_A',\eta') \in \M(S)$ (i.e. up to isomorphism in the moduli description of $\M(S)$), such that the induced map
    \[\rho': A'[p^\infty] \to \cH\]
    is an $\O_p$-isomorphism of $p$-divisible groups with $G$-structure, and such that the quotient map $\pi: A \to A'$ is $\O_B$-linear, preserves the infinite away-from-$p$ level structures and preserves the polarizations up to a scalar in $\underline{\Z_{(p)}^\times \prod_{\P|p} x_\P^{\Z}}(S)$.
\end{lemma}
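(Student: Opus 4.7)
The plan is to take $A' := A/\ker\rho$ and to transport all structures on $A$ to $A'$ via the quotient map $\pi$, exploiting the scalar freedom $\prod_{\P|p} x_\P^\Z$ afforded by the hybrid formalism to rectify the polarization. Since $\rho$ is $\O_B$-linear, $K := \ker\rho$ is a finite locally free $\O_B$-stable subgroup of $A[p^\infty]$; the $p$-power isogeny $\pi : A \twoheadrightarrow A'$ then descends the $\O_B$-action to $\iota_{A'}$, the factorization $\rho = \rho' \circ \pi[p^\infty]$ produces an $\O_B$-linear isomorphism of $p$-divisible groups $\rho' : A'[p^\infty] \overset{\sim}{\to} \cH$, and the isomorphism $\pi_* : H_1(A, \A_f^p) \overset{\sim}{\to} H_1(A', \A_f^p)$ transports $\eta$ to a candidate level structure $\eta' := \eta \circ \pi_*^{-1}$ on $A'$.

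The heart of the argument is the construction of $\lambda_{A'}$. Working locally on $S$ so that the scalar $c \in \underline{F_p^\times}(S)$ with $\rho^\vee \circ \lambda_H \circ \rho = c \cdot \lambda_A[p^\infty]$ has constant $\P$-adic valuations $m_\P := v_\P(c_\P) \in \Z$ (for the decomposition $c = (c_\P)_\P$ under $F_p = \prod_{\P|p} F_\P$), I would set $n := \prod_{\P|p} x_\P^{m_\P}$, so that $n \in \underline{\Z_{(p)}^\times \prod_{\P|p} x_\P^\Z}(S)$ and $u := c/n \in \underline{\O_p^\times}(S)$. Then I would define $\lambda_{A'}$ as the unique symmetric morphism $A' \to A'^\vee$ characterized by $\pi^\vee \circ \lambda_{A'} \circ \pi = n \cdot \lambda_A$. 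For this definition to make sense, one first observes that the equation $\rho^\vee \lambda_H \rho = c \lambda_A[p^\infty]$ together with $\rho|_K = 0$ forces $c\lambda_A[p^\infty](K) = 0$; since $\lambda_A[p^\infty]$ is an isomorphism, a $\P$-by-$\P$ analysis then identifies $K \cap A[\P^\infty]$ as being contained in $A[\P^{m_\P}]$, which is exactly the condition $n\lambda_A|_K = 0$ needed for $n\lambda_A$ to descend through $\pi$; symmetry of $\lambda_A$ together with the computation of $\lambda_{A'}$ on all Tate modules then yields $\lambda_{A'}$ as an honest morphism of abelian schemes.

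To verify the remaining properties, the defining relation $\pi^\vee \lambda_{A'} \pi = n\lambda_A$ restricted to $p^\infty$-torsion combined with $c = nu$ gives $\lambda_{A'}[p^\infty] = u^{-1} \cdot (\rho')^\vee \lambda_H \rho'$, which is an isomorphism; in particular $\lambda_{A'}$ has degree prime to $p$, and $\rho'$ becomes an $\O_p$-isomorphism with similitude constant $u \in \underline{\O_p^\times}(S)$. Restricted to prime-to-$p$ Tate modules, $\pi_*$ is an isomorphism and the change of Weil pairing by the factor $n$ is absorbed as a shift of the hybrid parameter $\alpha$ by the away-from-$p$ component $n^{(p)} \in \A_{F,f}^{p,\times}$, so $\eta'$ becomes a valid hybrid level structure for the appropriately shifted parameter. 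Symmetry $\lambda_{A'}^\vee = \lambda_{A'}$ and compatibility with $\iota_{A'}$ through the Rosati involution transfer formally from the corresponding properties of $\lambda_A$. The local construction of $n$ is canonical and therefore glues, and uniqueness of $\cA'$ up to isomorphism follows from the $0$-truncatedness in Lemma \ref{igs_sheaf} together with the fact that all data on $\cA'$ is forced by the compatibility conditions with $\pi$ and $\rho'$.

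The main obstacle is showing that $\lambda_{A'}$ is an honest polarization of prime-to-$p$ degree, rather than a mere quasi-isogeny; this is precisely what the hybrid flexibility of allowing scalars in $\prod_{\P|p} x_\P^\Z$ accomplishes, by matching the $\P$-adic valuations of $c$ exactly so that the residual scalar lies in $\O_p^\times$. In the strictly PEL-type setting (where only $\Z_{(p)}^\times$ scalars are permitted) this adjustment would generally be impossible, which is exactly why the hybrid formalism is introduced in the first place.
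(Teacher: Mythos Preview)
Your argument follows the same route as the paper's: transport $\iota_A$ and $\eta$ through the $p$-power isogeny $\pi$, and fix $\lambda_{A'}$ by the relation $\pi^\vee\lambda_{A'}\pi = n\lambda_A$ with $n=\prod_{\P|p} x_\P^{m_\P}$ chosen so that $u:=c/n\in\underline{\O_p^\times}(S)$. The paper's proof is shorter only because it never tries to realize $\lambda_{A'}$ as an honest isogeny. In the hybrid moduli problem $\lambda$ is merely a $\Z_{(p)}$-polarization of prime-to-$p$ degree, so one may set $\lambda_{A'}:=(\pi^\vee)^{-1}(n\lambda_A)\pi^{-1}$ directly as a quasi-isogeny and then observe that $\lambda_{A'}[p^\infty]=u^{-1}(\rho')^\vee\lambda_H\rho'$ is an isomorphism; your kernel analysis (showing $K\cap A[\P^\infty]\subset A[\P^{m_\P}]$ and invoking symmetry to descend on both sides) is correct but is more than the moduli problem asks for.

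One small correction: your appeal to Lemma~\ref{igs_sheaf} for uniqueness is not the right justification. That lemma concerns automorphisms in $\Igs^{\pre}$, whereas the uniqueness claimed here is simply that the compatibility requirements with $\pi$ and $\rho'$ force $\iota_{A'}=\pi\iota_A\pi^{-1}$, force $\eta'=\eta$ via $\pi_*$, and force $\lambda_{A'}$ up to a factor in $\underline{\Z_{(p)}^\times}(S)$---which is precisely the ambiguity absorbed by isomorphisms in $\M(S)$. This is how the paper phrases it, and it is what your own construction actually shows.
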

\begin{proof}
    Since $A'$ is a modification of $A$ at $p$, $A'$ clearly inherits an infinite away-from-$p$ level structure from $A$. Namely, the condition on the quotient map $\pi: A \to A'$ forces $\eta'$ to be $\eta$, and also forces us to define $\iota_A' := \pi\iota_A\pi^{-1}$. Note that $\iota_A'$ thus defined is indeed valued in prime-to-$p$ quasi-isogenies, because on the $p$-divisible groups $\iota_A'(b)[p^\infty] = \iota_H(b)$ is an isomorphism. 

    As for the polarization, by the condition on $\pi$ we are forced to define $\lambda_A' = (\pi^\vee)^{-1} (d\lambda_A)\pi$ for some $d \in \underline{\Z_{(p)}^\times \prod_{\P|p} x_\P^\Z}(S)$. Assume $c \in \underline{F}_p^\times(S)$ is a scalar such that 
    \[c \lambda_A[p^\infty] = \rho^\vee \lambda_H \rho,\]
    then $\rho'$ preserves the polarization up to $cd^{-1} \in \underline{F}_p^\times(S)$. Since we want $\rho'$ to be an $\O_p$-isomorphism of $p$-divisible groups with $G$-structure, we need to require $cd^{-1} \in \underline{\O}_p^\times(S)$, i.e. for all primes $\P$ lying above $p$ we need to require $v_\P(c) = v_\P(d)$. This uniquely determines $d \in \underline{\Z_{(p)}^\times \prod_{\P|p} x_\P^{v_\P(c)}}(S)$, up to a scalar in $\underline{\Z_{(p)}^\times}(S)$, as desired. 
\end{proof}

\begin{remark}
    If in the above Lemma $\rho$ is only a quasi-isogeny, then we may assume that for some large $N$, $p^N\rho$ is an actual isogeny. Then
    \[\cA' := (A/\ker(p^N\rho), \pi \iota_A \pi^{-1}, \pi^\vee d\lambda_A \pi, \overline{\eta}')\]
    is a canonical object of $\M_{K}(S)$.
\end{remark}

Let $F := \Igs \times_{\Bun_G} \Gr$. Since v-stackification commutes with fiber products, it will be convenient to instead consider the presheaf of groupoids 
\[F^{\pre} := \Igs^{\pre} \times_{\Bun_G} \Gr.\]
By Lemma \ref{igs_sheaf}, $F^{\pre}$ is 0-truncated.

\begin{lemma}\label{diagram_is_commutative}
    The diagram
\[\begin{tikzcd}
	{\cX^{\circ,\pre}} & \Gr \\
	{\Igs^{\pre}} & {\Bun_G}
	\arrow["{\pi_{HT}}", from=1-1, to=1-2]
	\arrow["q"', from=1-1, to=2-1]
	\arrow["BL", from=1-2, to=2-2]
	\arrow["{\overline{\pi}_{HT}}"', from=2-1, to=2-2]
\end{tikzcd}\]
is 2-commutative.
\end{lemma}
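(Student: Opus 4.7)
The plan is to fix $S = \Spa(R,R^+) \in \Perf$ and an object $(S^\sharp, y, \beta)$ of $\cX^{\circ,\pre}(S)$ via the moduli description of Corollary \ref{grlocus_moduli_cor}, and construct a canonical, functorial isomorphism between the two $G$-torsors on $X_S$ produced by the two paths around the diagram. Write $\fA$ for the corresponding formal abelian scheme over $\Spf R^{\sharp+}$ with generic fibre $\cA$ and trivialization $\beta\colon T_p\cA \iso \underline{\Lambda}$, set $A := \fA \times_{R^{\sharp+}} R^+/\varpi$, let $M := M_\Prism(\fA[p^\infty])$ denote the covariant prismatic Dieudonné module over $W(R^+)$, and let $c\colon T_p\cA \otimes_{C^0(\Spec R^+,\Z_p)} B_{\dR}(R^\sharp) \iso M \otimes_{W(R^+)} B_{\dR}(R^\sharp)$ be the étale-prismatic comparison constructed in the proof of Proposition \ref{ht_hybrid}. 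Unpacking, $BL \circ \pi_{HT}$ produces the $G$-torsor $\mathcal{T}_1$ obtained by Beauville-Laszlo gluing of the trivial $G$-torsor on $X_S \setminus S^\sharp$ with the $G$-torsor $\cF$ on $\Spec B_{\dR}^+(R^\sharp)$ associated (via the Tannakian formalism) to $M \otimes_{W(R^+)} B_{\dR}^+(R^\sharp)$ with its $\O_B$-structure and lifted symplectic pairing, glued via the trivialization $\alpha = (\beta \otimes \id) \circ c^{-1}$ over $B_{\dR}(R^\sharp)$; whereas $\overline{\pi}_{HT} \circ q$ produces the $G$-torsor $\mathcal{T}_2 := \underline{\Isom}_G(\ccE(A), \Lambda \otimes_{\Z_p} \O_{X_S})$.

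To compare $\mathcal{T}_1$ and $\mathcal{T}_2$, I would decompose $\mathcal{T}_2$ via Beauville-Laszlo into its restriction to $X_S \setminus S^\sharp$, its completion along $S^\sharp$, and the gluing datum over $B_{\dR}(R^\sharp)$. The short exact sequence $0 \to T_p\cA \otimes \O_{X_S} \to \ccE(A) \to i_*\Lie(\cA) \to 0$ used in the construction of $\overline{\pi}_{HT}$ gives a canonical identification $\ccE(A)|_{X_S \setminus S^\sharp} \iso T_p\cA \otimes \O_{X_S \setminus S^\sharp}$ respecting the $\O_B$-structure and symplectic pairing, so composition with $\beta$ trivializes $\mathcal{T}_2|_{X_S \setminus S^\sharp}$ and identifies it with $\mathcal{T}_1|_{X_S \setminus S^\sharp}$. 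Applying the prismatic-crystalline Dieudonné comparison and then base-changing along the natural map $A_{\cris}(R^+/\varpi) \to B_{\dR}^+(R^\sharp)$, the completion of $\ccE(A)$ along $S^\sharp$ becomes canonically isomorphic to $M \otimes_{W(R^+)} B_{\dR}^+(R^\sharp)$ as an $\O_B$-module with symplectic form, identifying $\hat{\mathcal{T}}_{2,S^\sharp}$ with $\cF$. Under these identifications, the Beauville-Laszlo gluing datum of $\mathcal{T}_2$ over $B_{\dR}(R^\sharp)$ is precisely $c$: an outside section $\sigma$ corresponds on the inside to $\sigma \circ c^{-1}$, so the trivializing outside section $\beta$ corresponds on the inside to $\beta \circ c^{-1} = \alpha$, matching exactly the gluing datum of $\mathcal{T}_1$. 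This yields the desired canonical isomorphism $\mathcal{T}_1 \iso \mathcal{T}_2$, and functoriality in $S$ is formal.

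The main obstacle is establishing that the Beauville-Laszlo gluing datum of $\ccE(A)$ along $S^\sharp$ really does agree with the étale-prismatic comparison $c$ under the above identifications. This is a geometric reformulation, on $X_S$, of the compatibility between the crystalline-de Rham comparison implicit in the construction of $\ccE(A)$ from the rational crystalline Dieudonné module and the étale-de Rham comparison used to define $\pi_{HT}$. It should follow from the defining compatibility of the prismatic Dieudonné module $M$ with the crystalline Dieudonné module on one hand, and with the Tate module (after base-change to $B_{\dR}$) via $c$ on the other, together with the fact that $\ccE(A)$ is constructed, via the graded ring $\bigoplus_{d \geq 0}(B_{\cris,S}^+)^{\varphi = p^d}$ and GAGA, precisely so that its Beauville-Laszlo data at any untilt of $S$ over $E$ recovers these classical comparisons.
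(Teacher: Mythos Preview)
Your proposal is correct and takes essentially the same approach as the paper: both arguments reduce 2-commutativity to the prismatic--crystalline comparison isomorphism $M_\Prism(\cG) \otimes_{W(R^+)} A_{\cris}(R^+/\varpi) \iso M_{\cris}(\cG \times_{R^{\sharp+}} R^+/\varpi)$ of \cite[17.5.2]{SW20}. The paper's proof is much terser---it simply asserts that tracing the two paths yields the $G$-bundles associated to these two modules and invokes the comparison---whereas you have unpacked the Beauville--Laszlo decomposition of $\ccE(A)$ explicitly; the ``obstacle'' you flag at the end is exactly the content that the paper leaves implicit.
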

\begin{proof}
    Given a point $(S^\sharp,\fA,\beta)$ of $\cX^{\circ,\pre}$, let $A = \fA \times_{R^{\sharp+}} R^+/\varpi$ be the reduction of $\fA$ modulo $\varpi$. Let $\cG = \fA[p^\infty]$. Then $\cG \times_{R^{\sharp+}} R^+/\varpi = A[p^\infty]$.

    Tracing along the upper-right path of the diagram gives the $G$-bundle associated to
    \[M_{\Prism}(\cG) \otimes_{W(R^+)} A_{\cris}(R^+/\varpi)\]
    and tracing along the lower-left path of the diagram gives the $G$-bundle associated to
    \[M_{\cris}(\cG \times_{R^{\sharp+}} R^+/\varpi)\]
    Thus 2-commutativity is established by the comparison isomorphism between prismatic (or more specifically Breuil-Kisin-Fargues modules in this case) and crystalline Dieudonné modules, see \cite[17.5.2]{SW20}.
\end{proof}

Thus we obtain a natural map $\cX^{\circ,\pre} \to \Igs^{\pre} \times_{\Bun_G} \Gr$, which we want to show is an equivalence after v-sheafification. It suffices to check that that the map is bijective after evaluating at strictly totally disconnected perfectoid spaces, because the latter form a basis for the v-topology on $\Perf$.

Assume $S = \Spa(R,R^+)$ is a product of points, i.e. $R^+ = \prod_{i \in I} C_i^+$, where each $C_i^+$ is a valuation subring of a complete non-archimedean algebraically closed field $C_i$ (such that $C_i^+ \subset \O_{C_i}$), with pseudo-uniformizers $\varpi_i \in C_i^+$. Let $\varpi = (\varpi_i)$, then $R = R^+[1/\varpi]$. By $k_i$ we will denote the residue field of $C_i$, and write $\overline{C_i^+}$ for the image of $C_i^+$ in $k$. Whenever we put a subscript to a morphism on $S$, we will mean its restriction along $\Spa(C_i,C_i^+) \to S$.

\begin{prop}
    For any $S$ as above, the natural map
    \[\cX^{\circ,\pre}(S) \to F^{\pre}(S)\]
    is a bijection that is natural in $S$.
\end{prop}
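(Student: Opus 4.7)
The strategy is to adapt Zhang's argument from \cite[Section 8.2]{Zha23} to our hybrid setting. The main inputs are the Serre-Tate lifting theorems \ref{st_1} and \ref{st_2}, the polarization-bookkeeping Lemma \ref{8.11 adaptation}, the 2-commutativity established in Lemma \ref{diagram_is_commutative}, and full faithfulness of the rational crystalline Dieudonné functor over product-of-points bases (reducing componentwise via Corollary \ref{prod_cor} whenever convenient).

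For injectivity, let $(S^\sharp_i,\fA_i,\beta_i)$, $i=1,2$, have equal image in $F^{\pre}(S)$. Matching the $\Gr$-components forces $S^\sharp_1 = S^\sharp_2 =: S^\sharp$ and produces an honest isomorphism $\rho \colon \fA_1[p^\infty] \overset{\sim}{\to} \fA_2[p^\infty]$ over $R^{\sharp+}$ matching the trivializations. Matching the $\Igs^{\pre}$-components yields a quasi-isogeny $\phi_0 \colon \fA_{1,0} \to \fA_{2,0}$ over $R^+/\varpi$ preserving all structures up to a scalar in $\underline{\Z_{(p)}^\times \prod_{\P|p} x_\P^\Z}(S)$. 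By Lemma \ref{diagram_is_commutative}, together with full faithfulness of the rational Dieudonné functor on product-of-points bases, $\phi_0[p^\infty]$ must agree with $\rho \otimes R^+/\varpi$; in particular $\phi_0$ is an $\O_p$-isomorphism on $p$-divisible groups. Iterating Serre-Tate (Theorem \ref{st_2}) along the tower $R^{\sharp+}/p^n \twoheadrightarrow R^+/\varpi$ and passing to the $\pi$-adic limit then lifts $\phi_0$ to an isomorphism $\fA_1 \overset{\sim}{\to} \fA_2$ over $R^{\sharp+}$ respecting all PEL-type data, so $\fA_1 = \fA_2$ as $R^{\sharp+}$-points of the fine moduli problem.

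For surjectivity, take $(I,G,\alpha) \in F^{\pre}(S)$ with $I = (S^\sharp_I,\fA_I)$, $G = (S^\sharp_G, \cH, \beta)$, and $\alpha \colon \overline{\pi}_{HT}(I) \overset{\sim}{\to} BL(G)$. The isomorphism $\alpha$ identifies the Hodge filtrations at the two untilts, forcing $S^\sharp_I = S^\sharp_G =: S^\sharp$. Using the explicit constructions of $\overline{\pi}_{HT}$ and $BL$ in Section \ref{tors_vbs} together with full faithfulness of the rational crystalline Dieudonné functor, $\alpha$ is equivalent to an $\O_B$-linear $\Q_p$-quasi-isogeny $\rho_0 \colon \fA_{I,0}[p^\infty] \to \cH \otimes_{R^{\sharp+}} R^+/\varpi$ preserving polarizations up to an $F_p^\times$-scalar. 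Applying Lemma \ref{8.11 adaptation} to $p^N \rho_0$ for $N \gg 0$ produces $\fA' := \fA_{I,0}/\ker(p^N \rho_0) \in \M(R^+/\varpi)$ equipped with an $\O_p$-isomorphism $\fA'[p^\infty] \overset{\sim}{\to} \cH \otimes R^+/\varpi$, such that the quotient $\pi \colon \fA_{I,0} \to \fA'$ matches polarizations up to $\underline{\Z_{(p)}^\times \prod_{\P|p} x_\P^\Z}(S)$. Iterating Serre-Tate along the tower $R^{\sharp+}/p^n \twoheadrightarrow R^+/\varpi$ and passing to the $\pi$-adic limit uniquely lifts $\fA'$ to a formal abelian scheme $\fA$ over $R^{\sharp+}$ with $\fA[p^\infty] = \cH$; the trivialization $\beta$ of the Tate module of $\cH$ then serves as the Tate module trivialization of $\fA$. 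The resulting triple $(S^\sharp,\fA,\beta) \in \cX^{\circ,\pre}(S)$ has image in $\Igs^{\pre}$ isomorphic to $I$ via $\pi$, image in $\Gr$ literally equal to $G$, and the induced $\Bun_G$-isomorphism matches $\alpha$ by construction.

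The chief technical obstacle is the translation, in both directions, between $\Bun_G$-isomorphisms on the Fargues-Fontaine curve and quasi-isogenies of $p$-divisible groups on $R^+/\varpi$: this relies crucially on $R^+$ being a product of valuation rings, allowing componentwise Dieudonné-theoretic arguments via Corollary \ref{prod_cor}. A secondary subtlety is that the surjection $R^{\sharp+} \twoheadrightarrow R^+/\varpi$ does not have nilpotent kernel, so Serre-Tate must be applied stepwise along the finite thickenings $R^{\sharp+}/p^n$ (choosing $\varpi$ sufficiently small per Remark \ref{multiple_untilts}) before taking the $\pi$-adic limit, with careful bookkeeping at each stage of the $G$-structure, the prime-to-$p$ level structure, and the polarization scalar via $\underline{\Z_{(p)}^\times \prod_{\P|p} x_\P^\Z}$. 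Naturality in $S$ is immediate from the functoriality of all constructions used.
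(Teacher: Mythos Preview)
There is a genuine gap in your surjectivity argument: the claim that the isomorphism $\alpha$ ``identifies the Hodge filtrations at the two untilts, forcing $S^\sharp_I = S^\sharp_G$'' is incorrect. The map $\overline{\pi}_{HT}: \Igs^{\pre} \to \Bun_G$ is built from the \emph{crystalline} Dieudonné module of the reduction modulo $\varpi$, and $\Bun_G$ carries no untilt datum at all; likewise $BL$ forgets the untilt. So $\alpha$ is merely an isomorphism of $G$-bundles on $X_S$ and imposes no relation between $S^\sharp_I$ (over $\O_E$) and $S^\sharp_G$ (over $E$). Indeed the whole point of allowing morphisms in $\Igs^{\pre}$ between objects with different untilts (Definition \ref{hybrid_igs}, Remark \ref{multiple_untilts}) is to accommodate this. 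The output of the construction must live over the \emph{$\Gr$-untilt} $S^\sharp_G$, and the quasi-isogeny $\pi$ then serves as the isomorphism in $\Igs^{\pre}$ back to the original object over $S^\sharp_I$.

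This misreading cascades into a second gap. On a geometric point the $\Gr$-datum gives, by partial properness and Lemma \ref{pts_of_gr}, a $p$-divisible group $\cH$ over $\O_{C^{\sharp_2}}$, \emph{not} over $C^{\sharp_2+}$; hence $\cH$ only reduces to $\O_C/\varpi$, and the compatibility $\rho$ is a quasi-isogeny over $\O_C/\varpi$ rather than over $C^+/\varpi$. Your direct application of Lemma \ref{8.11 adaptation} and Serre-Tate over $R^+/\varpi$ therefore has no input $p$-divisible group on the correct base. The paper closes this gap in two steps you omit: first Lemma \ref{extension_lemma} extends $\ker(p^n\rho_k)$ to a finite flat subgroup over the valuation ring $\overline{C^+}$, producing $\cB'$; then the Milnor-square descent of Proposition \ref{BT_milnor} glues $\cB'[p^\infty]$ with $\cH$ along $k$ to obtain a $p$-divisible group $\cG$ over $C^{\sharp_2+}$. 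Only after this can Serre-Tate be invoked. Finally, in the product-of-points case your ``$N \gg 0$'' hides a genuine issue: the pointwise exponents $n_i$ need a \emph{uniform} bound, which the paper extracts by identifying $\phi$ with a global section of $(M^\vee \otimes M'[1/p])^{\varphi = 1}$ and observing that some $p^N\phi$ lands in the integral lattice.
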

\begin{proof}
    As a warm-up, we start with the case of a single geometric point $S = \Spa(C,C^+)$, where $C^+ \subset \O_C$ is a bounded valuation subring. Let $k$ be the residue field of $C$, and let $\overline{C^+}$ denote the image of $C^+$ in $k$. 

    Let us unpack what a point in $(\Igs^{\pre} \times_{\Bun_G} \Gr)(C,C^+)$ looks like. A $\Spa(C,C^+)$-point of $\Gr$ is equivalent to the data of an untilt $(C^{\sharp_2},C^{\sharp_2+})$ of $(C,C^+)$ over $E$ and a $p$-divisible group $\cH$ over $\O_{C^{\sharp_2}}$ with $G$-structure that comes equipped with a trivialization $\alpha$ of its Tate module. Secondly, a $\Spa(C,C^+)$-point of $\Igs^{\pre}$ is given by another untilt $(C^{\sharp_1}, C^{\sharp_1+})$ of $(C,C^+)$ over $\O_E$ together with a map $\Spf C^{\sharp_1+} \to \X$. The latter data is equivalent to giving a formal abelian scheme $\fA$ over $C^{\sharp_1+}$ with $G$-structure and infinite prime-to-$p$ level structure. 

    Fix a sufficiently small pseudo-uniformizer $\varpi \in C^+$ such that we get maps $C^{\sharp_1+} \twoheadrightarrow C^+/\varpi$ and $C^{\sharp_2+} \twoheadrightarrow C^+/\varpi$, and let $\cA = \fA \times_{C^{\sharp_1+}} C^+/\varpi$, which is an abelian scheme with $G$-structure and infinite prime-to-$p$ level over $C^+/\varpi$. Then compatibility over $\Bun_G$ is given by the data of an $F_p$-quasi-isogeny
    \[\rho: \cA[p^\infty] \times_{C^+/\varpi} \O_C/\varpi \to \cH \times_{\O_{C^{\sharp_2}}} \O_C/\varpi,\]
    by fully faithfulness of $\ccE(\cdot)$ on $p$-divisible groups over $\O_C/\varpi$. We will now fix such a data. 

    From here on the argument works verbatim as in \cite{Zha23}, but for the convenience of the reader, we repeat it with the appropriate adjustments (in particular with the adjustments of Lemma \ref{8.11 adaptation}).
    
    We may assume that $p^n\rho$ is an actual isogeny for some integer $n$ (but still preserving the polarizations only up to a scalar in $F_p^\times$). Apply Lemma \ref{extension_lemma} to the valuation ring $\overline{C^+} \subset k$ with fraction field $k$, the abelian scheme $\cB := \cA \times_{C^+/\varpi} \overline{C^+}$ and the finite group scheme $\ker(p^n\rho_k)$, where $\rho_k$ is the base change of $\rho$ to $k$. It shows that the closure of $\ker(p^n\rho_k)$ in $\cB$ is a finite flat group scheme. We may take the quotient by it to obtain a new abelian scheme $\cB'$, which by our Lemma \ref{8.11 adaptation} can be uniquely (in $\M(\overline{C^+})$) endowed with a $G$-structure and infinite away-from-$p$ level. Note that we are not exactly in the setting of Lemma \ref{8.11 adaptation}, as $\rho_k$ is only defined over $k$ and not over $\overline{C^+}$, but since $k = \Frac(\overline{C^+})$ this suffices to fix the constant in the construction of the polarization. Also, Lemma \ref{8.11 adaptation} yields an $\O_p$-isomorphism $\cB'[p^\infty]_k \iso \cH_k$, and the quotient map $\cB \to \cB'$ preserves the polarizations up to a factor in $\underline{\Z_{(p)}^\times \prod_{\P|p} x_\P^\Z}$.

    Next, we can use the Milnor square
    \[\begin{tikzcd}
	   {C^{\sharp_2 +}} & {\overline{C^+}} \\
	   {\O_{C^{\sharp_2}}} & k
	   \arrow[from=1-1, to=1-2]
	   \arrow[from=1-1, to=2-1]
	   \arrow[from=1-2, to=2-2]
	   \arrow[two heads, from=2-1, to=2-2]
    \end{tikzcd}\]
    and Proposition \ref{BT_milnor} to glue $\cB'[p^\infty]$ and $\cH$ to get a $p$-divisible group $\cG$ with $G$-structure over $C^{\sharp_2+}$, with a trivialization $\psi: T_p\cG \iso T_p\cH \iso \underline{\Lambda}$. Also, using the Milnor square

    \[\begin{tikzcd}
	   {C^+/\varpi \cdot \O_C} & {\overline{C^+}} \\
	   {\O_C/\varpi} & k
	   \arrow[from=1-1, to=1-2]
	   \arrow[from=1-1, to=2-1]
	   \arrow[from=1-2, to=2-2]
	   \arrow[two heads, from=2-1, to=2-2]
    \end{tikzcd}\]
    and the fully faithfulness of Proposition \ref{BT_milnor}, applied to the isogenies
    \[p^n\rho: \cA[p^\infty] \times_{C^+/\varpi} \O_C/\varpi \to \cH \times_{\O_{C^{\sharp_2}}} \O_C/\varpi\] 
    and 
    \[\cB[p^\infty] = \cA[p^\infty] \times_{C^+/\varpi} \overline{C^+} \to \cB'[p^\infty],\] 
    we obtain an isogeny
    \[\cA[p^\infty] \times_{C^+/\varpi} (C^+/\varpi \cdot \O_C) \to \cG \times_{C^{\sharp_2 +}} (C^+/\varpi \cdot \O_C).\]
    preserving the polarization up to a factor in $F_p^\times$. By Serre-Tate lifting (see Theorem \ref{st_2}), this lifts to an isogeny
    \[\cA[p^\infty] \to \cG \times_{C^{\sharp_2+}} C^+/\varpi,\]
    again preserving the polarizations up to a factor in $F_p^\times$. Multiplying this isogeny with $p^{-n}$ yields an $F_p$-quasi-isogeny that is not dependent on choice of $n$.

    At this point, we revert back to the general case where $S = \Spa(R,R^+)$ is a product of points $s_i = \Spa(C_i,C_i^+)$ (over an arbitrary index set $i \in I$). Given an $S$-point of $\Igs \times_{\Bun_G} \Gr$
    \[((R^{\sharp_1},R^{\sharp_1+}),\fA,(R^{\sharp_2}, R^{\sharp_2+}), y, \phi: \ccE(\cA) \iso \ccE(y)),\]
    fix a uniformizer $\varpi \in R^+$ sufficiently small as before, and let $\cA = \fA \times_{R^{\sharp_1+}} R^+/\varpi$. For each $i \in I$ we can pull this $S$-point back to an $s_i$-point and proceed as before in the case of a single geometric point. 

    This way, we obtain for each $i \in I$ a $p$-divisible group $\cG_i$ with $G$-structure over $C_i^{\sharp_2+}$, a trivialization $\psi_i: T_p\cG_i \iso \underline{\Lambda}$ and an $F_p$-quasi-isogeny
    \[\rho_i: \cA[p^\infty] \times_{R^+/\varpi} C_i^+/\varpi_i \longrightarrow \cG_i \times_{C_i^{\sharp_2 +}} C_i^+/\varpi_i,\]
    such that for all $i \in I$, $p^{n_i}$ is an actual isogeny for some integer $n_i$.
    
    Take the product $\cG := \prod_{i \in I} \cG_i$, which by Corollary \ref{prod_cor} is a $p$-divisible group with $G$-structure over $R^{\sharp_2+}$. Also there is a unique trivialization $\psi: T_p\cG \iso \underline{\Lambda}$ that restricts to the $\psi_i$, by properness of the diagonal of $[*/\underline{K}_{hs}] \to *$, see \cite[2.18]{Zha23}. Let $\cG_0 := \cG \times_{R^{\sharp_2+}} R^+/\varpi$.

    Next, we recall from \cite[8.14]{Zha23} how to show that the integers $n_i$ can be bounded by a sufficiently large integer $N \gg 0$. This amounts to showing that there is a common bound $N \gg 0$ such that $p^N\phi_i$ are all actual isogenies. First, $(\cG,\psi)$ defines an $S$-valued point in $\Gr$, by considering its prismatic Dieudonné module $M_\Prism(\cG)$, tensoring it up to $B_{\dR}^+(R^{\sharp_2})$ and comparing it with
    \[T_p\cG \otimes_{\Z_p} B_{\dR}^+(R^{\sharp_2}) \overset{\psi \otimes \id}{\simeq} \Lambda \otimes_{\Z_p} B_{\dR}^+(R^{\sharp_2}),\]
    which yields a $G$-torsor on $\Spec(B_{\dR}^+(R^{\sharp_2}))$ which is trivialized over $B_{\dR}(R^{\sharp_2})$ by crystalline-de Rham comparison.
    This $S$-point agrees with our original point $y \in \Gr(S)$ when restricted to each $s_i$. Thus by properness of $\Gr$, the two $S$-points agree as a whole. In particular, the $G$-bundles $\ccE(\cG) := \ccE(\cG_0)$ and $\ccE(y)$ are isomorphic, and thus the isomorphism $\phi: \ccE(\cA[p^\infty]) \overset{\sim}{\to} \ccE(y)$ can be seen instead as an isomorphism $\phi: \ccE(\cA[p^\infty]) \overset{\sim}{\to} \ccE(\cG)$. Letting $(M,\varphi_M)$ resp. $(M',\varphi_{M'})$ denote the crystalline Dieudonné module of $\cA$ resp. $\cG$, we have a diagram as follows
\[\begin{tikzcd}
	{(M^\vee \otimes M')^{\varphi \otimes \varphi'=1}} && {(M^\vee \otimes M'[\frac{1}{p}])^{\varphi \otimes \varphi' = 1}} \\
	{\Hom(\cA[p^\infty],\cG_0)} && {\Hom(\ccE(\cA[p^\infty]),\ccE(\cG))} \\
	{\prod_i \Hom(\cA_{i}[p^\infty],\cG_{0,i})[\frac{1}{p}]} && {\prod_i \Hom(\ccE(\cA_{i}[p^\infty]),\ccE(\cG_i))}
	\arrow[hook, from=1-1, to=1-3]
	\arrow["\sim", from=1-1, to=2-1]
	\arrow["\sim", from=1-3, to=2-3]
	\arrow["{\ccE(\cdot)}", from=2-1, to=2-3]
	\arrow[hook', from=2-1, to=3-1]
	\arrow[hook', from=2-3, to=3-3]
	\arrow["{\ccE(\cdot) \sim}", from=3-1, to=3-3]
\end{tikzcd}\]
    This makes it clear that for large enough $N \gg 0$, $p^N\phi \in \Hom(\ccE(\cA[p^\infty]),\ccE(\cG))$ lies in the image of $\ccE(\cdot)$. Hence $p^N(\phi_i)_{i \in I}$ also lies in the image of $\ccE(\cdot)$. This $N$ serves as the desired upper bound. 
 
    By taking products, we get an $F_p$-quasi-isogeny
    \[\rho = p^{-N} \prod_i p^N\rho_i: \cA[p^\infty] \longrightarrow \cG_0 \]
    such that $\ccE(\rho)$ gets identified with $\phi$.
    
    Using Lemma \ref{8.11 adaptation} (or rather the remark following it), we can modify $\cA$ via $\rho$: It yields a quadruple $\cA' = (A',\iota',\lambda',\overline{\eta}')$ (unique up to isomorphism in $\M(R^+/\varpi)$), such that the induced map $\rho': \cA_0'[p^\infty] \to \cG_0$ is an $\O_p$-isomorphism. Furthermore, the quotient map $\pi: \cA \to \cA'$ will define an isomorphism in $\Igs$ once we have shown that $\cA'$ can be lifted to a formal abelian scheme over $R^{\sharp_2+}$.
    
    In order to lift, modify the polarization of $\cG$ by a scalar in $\underline{\O}_p^\times(S)$ such that $\rho'$ preserves the polarizations on the nose. Thus by the Serre-Tate Theorem \ref{st_2}, the triple $(\cA',\cG,\rho')$ gives rise to a formal abelian scheme $\fA'$ over $\Spf R^{\sharp_2+}$, equipped with $G$-structure and an infinite away-from-$p$ level (because we have only modified $\cA$ at $p$). Also, its $p$-divisible group $\fA'[p^\infty]$ agrees with $\cG$, hence $\psi: T_p\fA'_\eta = T_p\cG \iso \underline{\Lambda}$ gives the infinite level at $p$.
\end{proof}

\begin{corollary}\label{cart_hybrid}
    The diagram
\[\begin{tikzcd}
	{\cX^{\circ}} & \Gr \\
	\Igs & {\Bun_G}
	\arrow["{\pi_{HT}}", from=1-1, to=1-2]
	\arrow["q"', from=1-1, to=2-1]
	\arrow["BL", from=1-2, to=2-2]
	\arrow["{\overline{\pi}_{HT}}"', from=2-1, to=2-2]
\end{tikzcd}\]
is Cartesian.
\end{corollary}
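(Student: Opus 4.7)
The plan is to deduce this corollary essentially formally from the preceding proposition, which already contains the technical heart of the argument. By Lemma \ref{diagram_is_commutative}, the analogous square of presheaves is 2-commutative, and therefore produces a canonical morphism of presheaves
\[\cX^{\circ,\pre} \longrightarrow F^{\pre} := \Igs^{\pre} \times_{\Bun_G} \Gr,\]
which the preceding proposition shows is a bijection on $S$-points whenever $S$ is a product of points.

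Next, I would invoke that strictly totally disconnected perfectoid spaces (equivalently, products of points) form a basis for the v-topology on $\Perf$. Consequently, a morphism of presheaves that becomes a bijection on every such $S$ induces an isomorphism after v-sheafification, and it only remains to identify the sheafifications of the two sides. Since v-sheafification commutes with finite limits, and since $\Gr$ and $\Bun_G$ are already v-stacks while $\Igs$ is by construction the v-sheafification of $\Igs^{\pre}$, the v-sheafification of $F^{\pre}$ is canonically $\Igs \times_{\Bun_G} \Gr$. On the other side, Corollary \ref{grlocus_moduli_cor} identifies the analytic (and hence v-) sheafification of $\cX^{\circ,\pre}$ with $\cX^\circ$, thereby yielding the desired Cartesian square.

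I do not anticipate any serious obstacle at this step, since the preceding proposition carries the real content. The only point requiring mild care is that checking isomorphy on a v-basis is legitimate here precisely because all presheaves involved are 0-truncated: this is guaranteed by Lemma \ref{igs_sheaf} for $\Igs^{\pre}$ and by the same rigidity argument for $\cX^{\circ,\pre}$, so that the map in question is genuinely a morphism of presheaves of sets rather than of prestacks of groupoids, and no separate cocycle verification is needed.
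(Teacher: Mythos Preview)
Your proposal is correct and follows exactly the approach the paper takes: the paper lays out precisely this reasoning in the paragraphs preceding the proposition (v-stackification commutes with fiber products, $F^{\pre}$ is 0-truncated by Lemma \ref{igs_sheaf}, and it suffices to check bijectivity on strictly totally disconnected $S$ since these form a basis for the v-topology), so that the corollary is stated without separate proof.
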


\subsection{Abelian-type Igusa stack}
We now want to construct the Igusa stack for our abelian-type Shimura variety.

First, we note that there is a well-defined polarization action on the presheaf
\[S \mapsto \Igs^{\pre}(S)/\iso\]
on $\Perf$. Namely, given a point $(S^\sharp,\fA)$ of $\Igs^{\pre}(S)$, we can realize $\fA$ as an inverse limit of formal abelian schemes with $G$-structure and increasing tame level structure. At level $K_{hs}K^p$, the polarization action on the abelian scheme factors through the finite group $\Delta(K_{hs}K^p)$. Thus, in the limit we obtain a polarization action of the profinite group
\[\Delta_{hs} := \invlim_{K^p} \Delta(K_{hs}K^p),\]
and after v-sheafification we have an action by the same profinite group on the v-sheaf $\Igs$. By pulling back along the obvious quotient map $\Delta \twoheadrightarrow \Delta_{hs}$, we also obtain an action on $\Igs$ by the full profinite group $\Delta$. 

These considerations lead us to defining the (infinite level) Igusa stack for our abelian-type Shimura variety as follows.
\begin{definition}
    The \textbf{abelian-type Igusa stack} is the quotient stack in the category of v-stacks
    \[\Igs_G := [\Igs/\underline{\Delta}]\]
\end{definition}

The next two easy lemmas provide the necessary maps from and into $\Igs_G$.

\begin{lemma}
    The previously constructed map $\overline{\pi}_{HT} : \Igs \to \Bun_G$ factors through the polarization action by $\underline{\Delta}$.
\end{lemma}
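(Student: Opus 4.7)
The plan is to verify the claim on the level of the presheaf $\Igs^{\pre}$ and then sheafify. Since $\Bun_G$ is a v-stack, any morphism out of $\Igs$ corresponds to a morphism of prestacks out of $\Igs^{\pre}$, and compatibility with the $\underline{\Delta}$-action can be checked before v-sheafification. By construction, the $\Delta$-action on $\Igs$ factors through the quotient $\Delta_{hs}$, which in turn acts on $\Igs^{\pre}$ by rescaling the polarization of $\fA$ by elements $\epsilon \in \O_{F,+}^\times$ (without touching the underlying abelian scheme, the $\O_B$-structure, or the prime-to-$p$ level). Thus it suffices to exhibit, for any $S = \Spa(R,R^+) \in \Perf$, any point $(S^\sharp, \fA) \in \Igs^{\pre}(S)$, and any $\epsilon \in \O_{F,+}^\times$, a canonical (functorial) isomorphism of $G$-torsors on $X_S$ between $\overline{\pi}_{HT}(S^\sharp, \fA)$ and $\overline{\pi}_{HT}(\epsilon \cdot (S^\sharp, \fA))$.

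The key observation is exactly the one already used in Corollary \ref{ht_abelian}. Writing $A := \fA \times_{R^{\sharp +}} R^+/\varpi$ for a sufficiently small pseudo-uniformizer $\varpi$, the rational crystalline Dieudonné module $M[1/p] = M_{\cris}(A[p^\infty])[1/p]$, together with its $\O_B$-action, depends only on the abelian scheme and is therefore unaffected by the polarization action. The only effect of passing from $\lambda$ to $\epsilon\lambda$ is that the lifted $F_p$-symplectic pairing $\langle\cdot,\cdot\rangle_M$ is rescaled by $\epsilon$. Consequently, the associated vector bundle $\ccE(A)$ on $X_S$, viewed as an $\O_B \otimes \O_{X_S}$-module, is literally unchanged, and its symplectic pairing changes only by the scalar $\epsilon \in \O_{F,+}^\times \subseteq F_p^\times$.

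Now the associated $G$-torsor is the isomorphism sheaf
\[
\underline{\Isom}_G(\ccE(A),\Lambda \otimes_{\Z_p} \O_{X_S})
\]
of $\O_B$-linear $F_p$-symplectic similitudes. Rescaling the source pairing by a unit in $F_p^\times$ does not alter this sheaf at all: the very definition of an $F_p$-symplectic similitude only demands the existence of \emph{some} similitude factor in $F_p^\times$, so two pairings differing by a global unit in $F_p^\times$ give rise to exactly the same collection of similitudes. This provides a tautological (hence canonical and natural in $S$) isomorphism between the two $G$-torsors, witnessing the $\underline{\Delta}$-invariance of $\overline{\pi}_{HT}$ at the level of $\Igs^{\pre}$.

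There is no serious obstacle here; the one mild subtlety is bookkeeping: one must verify that these isomorphisms are functorial in $S$, compatible with composition in $\Delta$, and compatible with the passage to the inverse limit $\Delta = \invlim_K \Delta(K) \twoheadrightarrow \Delta_{hs}$, so as to yield genuine descent data along $\Igs \to [\Igs/\underline{\Delta}]$. All of this is automatic from the construction, because the identifications above are equalities of sheaves rather than merely isomorphisms. Sheafifying then produces the desired factorization $\overline{\pi}_{HT} : \Igs_G = [\Igs/\underline{\Delta}] \to \Bun_G$.
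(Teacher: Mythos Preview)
Your proof is correct and follows essentially the same approach as the paper: reduce to checking $\underline{\Delta_{hs}}$-invariance on $\Igs^{\pre}$, observe that polarization action by $\epsilon \in \O_{F,+}^\times$ only rescales the symplectic pairing on $\ccE(A)$ by $\epsilon$, and note that this leaves the associated $G$-torsor unchanged since $F_p$-symplectic similitudes are insensitive to rescaling the pairing by a unit in $F_p^\times$. Your write-up is more detailed than the paper's one-line version, but the content is the same.
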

\begin{proof}
    It suffices to check invariance of $\overline{\pi}_{HT}: \Igs^{\pre} \to \Bun_G$ with respect to the action of $\underline{\Delta_{hs}}$. But one checks immediately from the construction of $\overline{\pi}_{HT}$ that changing the polarization by totally positive units will change the symplectic pairing of the vector bundle $\ccE(A)$ by the same totally positive unit, but this does not alter the isomorphism class of the corresponding $G$-bundle.
\end{proof}

\begin{lemma}
    The map $q: \cX^{\circ} \to \Igs$ induces a map $q: \cX^{\circ}_G \to \Igs_G$.
\end{lemma}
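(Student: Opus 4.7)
The plan is to reduce the claim to $\Delta$-equivariance of $q$ and then invoke the universal property of the quotient. First I would work at the presheaf level. By Corollary \ref{grlocus_moduli_cor}, an $S$-point of $\cX^{\circ,\pre}$ is a triple $(S^\sharp,\fA,\beta)$, and $q$ sends this to $(S^\sharp,\fA)\in\Igs^{\pre}(S)$ by simply forgetting the trivialization $\beta$ of the Tate module. The polarization action of $\underline{\O_{F,+}^\times}$ on both sides is given by $\epsilon\cdot(\cdot,\iota,\lambda,\overline{\eta})=(\cdot,\iota,\epsilon\lambda,\overline{\eta})$ and leaves $\beta$ untouched (it does alter the Weil pairing on $T_p$ by the scalar $\epsilon$, but $\beta$ is only an $\O_p$-symplectic similitude and the action stays within the $\underline{K_{hs}}$-orbit). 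Hence $q$ commutes strictly with these actions, and in particular with the induced actions of $\underline{\Delta(K_{hs}K^p)}$ at each finite tame level.

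Next I would pass to the limit: the actions at finite tame level assemble, as in the discussion preceding Definition~\ref{hybrid_igs} and the discussion preceding the definition of $\Igs_G$, to actions of the profinite groups $\underline{\Delta_{hs}}$ and hence $\underline{\Delta}$ (via pullback along $\Delta\twoheadrightarrow\Delta_{hs}$) on $\cX^{\circ,\pre}$ and $\Igs^{\pre}$, with respect to which $q$ is equivariant. Since v-sheafification commutes with colimits and thus with taking group quotients, the induced map $q:\cX^\circ\to\Igs$ of v-sheaves is $\underline{\Delta}$-equivariant.

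Finally I would descend. Composition with the canonical quotient map $\Igs\to\Igs_G=[\Igs/\underline{\Delta}]$ yields a $\underline{\Delta}$-invariant map $\cX^\circ\to\Igs_G$. By Lemma~\ref{delta_torsor}, the map $\cX^\circ\to\cX_G^\circ$ is a pro-étale $\underline{\Delta}$-torsor, so it is the categorical quotient by $\underline{\Delta}$ in the category of v-stacks (this is inherited from the fact that at each uniformly good level $K$, $\cX_K^\circ\to\cX_{G,K}^\circ$ is a finite étale $\Delta(K)$-torsor arising from a free action and is thus a categorical quotient, and the property passes to the cofiltered inverse limit). The universal property then produces the desired factorization $q:\cX_G^\circ\to\Igs_G$.

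The only place that requires a moment of care is the equivariance itself, namely verifying that scaling $\lambda$ by $\epsilon\in\underline{\O_{F,+}^\times}$ really does not disturb the isomorphism $\beta\in\underline{\Isom}_G(T_p\cA,\underline{\Lambda})$; this is immediate from the fact that $\beta$ is only required to be an $\O_p$-symplectic similitude, so rescaling the source pairing by a totally positive unit keeps $\beta$ in the same $\underline{K_{hs}}$-orbit. Everything else is a formal consequence of the universal property of the quotient.
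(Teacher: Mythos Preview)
Your proposal is correct and follows essentially the same approach as the paper: the paper's proof is the single sentence ``It suffices to see that $q: \cX^{\circ,\pre} \to \Igs^{\pre}$ is equivariant with respect to the action by $\Delta$, but this is obvious,'' and you have unpacked precisely this equivariance and the passage to quotients.

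One small remark on phrasing: at infinite level at $p$ the datum $\beta$ is a specific trivialization, not a $\underline{K_{hs}}$-orbit, so saying the action ``stays within the $\underline{K_{hs}}$-orbit'' is slightly off. The correct point, which you do make, is that $\beta$ remains a section of $\underline{\Isom}_G(T_p\cA,\underline{\Lambda})$ after scaling $\lambda$ by $\epsilon$, because $\epsilon\in\O_{F,+}^\times\subset\O_p^\times$ and $\beta$ is only required to be an $\O_p$-symplectic similitude. With that adjustment your argument is complete.
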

\begin{proof}
    It suffices to see that $q: \cX^{\circ,\pre} \to \Igs^{\pre}$ is equivariant with respect to the action by $\Delta$, but this is obvious.
\end{proof}

With the hard work already been done, we can now establish the rational fiber product conjecture for our abelian-type Shimura variety.

\begin{corollary}\label{cart_abelian}
    The diagram
\[\begin{tikzcd}
	{\cX^{\circ}_G} & \Gr \\
	{\Igs_G} & {\Bun_G}
	\arrow["{\pi_{HT}}", from=1-1, to=1-2]
	\arrow["q"', from=1-1, to=2-1]
	\arrow["BL", from=1-2, to=2-2]
	\arrow["{\overline{\pi}_{HT}}"', from=2-1, to=2-2]
\end{tikzcd}\]
is Cartesian.
\end{corollary}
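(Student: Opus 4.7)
The plan is to bootstrap from the Cartesian diagram for the hybrid space (Corollary \ref{cart_hybrid}) by descending along the pro-étale $\underline{\Delta}$-torsor $\Igs \twoheadrightarrow \Igs_G$ and matching it with the pro-étale $\underline{\Delta}$-torsor $\cX^{\circ} \to \cX_G^{\circ}$ provided by Lemma \ref{delta_torsor}.

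First I will construct the candidate morphism $\Phi \colon \cX_G^{\circ} \to \Igs_G \times_{\Bun_G} \Gr$. Its two structure maps are already available: the quotient $q \colon \cX_G^{\circ} \to \Igs_G$ from the previous lemma, and the Hodge--Tate period map $\pi_{HT} \colon \cX_G^{\circ} \to \Gr$ from Corollary \ref{ht_abelian}. The required 2-isomorphism $BL \circ \pi_{HT} \simeq \overline{\pi}_{HT} \circ q$ descends from its hybrid counterpart (Lemma \ref{diagram_is_commutative}): the prismatic--crystalline comparison producing it depends only on the underlying formal abelian scheme and not on the Tate-module trivialization, so it is manifestly $\underline{\Delta}$-invariant.

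To show that $\Phi$ is an isomorphism, I will pull back along the v-cover $\Igs \twoheadrightarrow \Igs_G = [\Igs/\underline{\Delta}]$. On the target side, the fiber product splits as $(\Igs_G \times_{\Bun_G} \Gr) \times_{\Igs_G} \Igs = \Igs \times_{\Bun_G} \Gr$, which equals $\cX^{\circ}$ by Corollary \ref{cart_hybrid}. On the source side, the tautological map $\cX^{\circ} \to \cX_G^{\circ} \times_{\Igs_G} \Igs$, sending $x$ to $(\bar x, q(x))$, is a $\underline{\Delta}$-equivariant morphism between two $\underline{\Delta}$-torsors over $\cX_G^{\circ}$ and hence an isomorphism. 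Tracing through the definitions, the pullback of $\Phi$ under these identifications is precisely the hybrid comparison map $\cX^{\circ} \to \Igs \times_{\Bun_G} \Gr$, which is an isomorphism by Corollary \ref{cart_hybrid}. Since being an isomorphism between v-stacks is v-local on the target, $\Phi$ itself is an isomorphism.

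The only real subtlety --- essentially bookkeeping --- is confirming that the two $\underline{\Delta}$-torsor structures on $\cX^{\circ} \to \cX_G^{\circ}$ match: the one from Lemma \ref{delta_torsor} and the one obtained by base-changing $\Igs \to \Igs_G$ along $q$. Both are induced by the polarization action $\epsilon \cdot (A, \iota, \lambda, \overline{\eta}) = (A, \iota, \epsilon\lambda, \overline{\eta})$ on the underlying abelian scheme, and the forgetful map $q$ (which merely discards the Tate-module trivialization, on which $\underline{\Delta}$ acts trivially) is tautologically $\underline{\Delta}$-equivariant, so the two actions coincide on the nose.
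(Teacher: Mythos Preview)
Your proof is correct and follows essentially the same approach as the paper's. The paper phrases it as a pasting-law argument --- the outer rectangle is Cartesian by Corollary \ref{cart_hybrid}, the left square is Cartesian because both horizontal maps are $\underline{\Delta}$-torsors, hence the right square is Cartesian --- but this is exactly your argument of pulling back $\Phi$ along the v-cover $\Igs \to \Igs_G$ and invoking v-locality of isomorphisms, just stated more tersely.
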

\begin{proof}
    Consider the composition of 2-commutative diagrams 
\[\begin{tikzcd}
	{\cX^\circ} & {\cX^{\circ}_G} & \Gr \\
	\Igs & {\Igs_G} & {\Bun_G}
	\arrow["\Delta", from=1-1, to=1-2]
	\arrow["q"', from=1-1, to=2-1]
	\arrow["{\pi_{HT}}", from=1-2, to=1-3]
	\arrow["q"', from=1-2, to=2-2]
	\arrow["BL", from=1-3, to=2-3]
	\arrow["\Delta"', from=2-1, to=2-2]
	\arrow["{\overline{\pi}_{HT}}"', from=2-2, to=2-3]
\end{tikzcd}\]
By Corollary \ref{cart_hybrid}, the full diagram is Cartesian, and the left-hand square consists of two maps that are v-torsors with respect to the same profinite group $\Delta$ (for the map $\cX^\circ \to \cX_G^\circ$, this is Lemma \ref{delta_torsor}). Hence also the right-hand square must be Cartesian. 
\end{proof}

Since the Hecke action of $G(\A_f)$ on $\Igs^{\pre}$ clearly commutes with the polarization action by $\Delta$, we obtain an induced Hecke action on $\Igs_G$. Furthermore, the Hecke action by $G(\A_f^p) \times G(\Q_p)$ on $\Igs^{\pre} \times_{\Bun_G} \Gr$ clearly recovers the Hecke action on $\cX^{\circ,\pre}$, thus after sheafifying the isomorphism
\[\cX^\circ \overset{\sim}{\to} \Igs \times_{\Bun_G} \Gr\]
is Hecke equivariant. This also shows that the isomorphism
\[\cX_G^\circ \overset{\sim}{\to} \Igs_G \times_{\Bun_G} \Gr\]
is Hecke equivariant. 

Hence the first axiom of Definition \ref{igs_axioms} is satisfied, which by the work of \cite{Kim25} immediately grants us uniqueness of the Igusa stack $\Igs_G$. In particular, this shows that our earlier choice of totally positive uniformizers $x_\P$ for each prime ideal $\P$ of $F$ above $p$ was in fact an auxiliary choice and our construction of $\Igs_G$ did a posteriori not depend on this choice. We remark that $\Igs$ as well as $\Igs_G$ also satisfy the second axiom from Definition \ref{igs_axioms}, by the exact same argument as \cite[5.2.15]{DHKZ24}.

\newpage

\end{document}